\newtheorem{thm}{Theorem}[section]
\newtheorem{cor}[thm]{Corollary}
\newtheorem{lem}[thm]{Lemma}
\newtheorem{prop}[thm]{Proposition}
\theoremstyle{definition}
\newtheorem{defi}[thm]{Definition}
\newcommand{\ts}{\vspace{0.5\baselineskip}\noindent{\bf Proof.}$\;\;$}
\newcommand{\pfs}{{\unskip\nobreak\hfill\hbox{ $\Box$}\vspace{\baselineskip}}}
\newcommand{\ZZ}{{\bf Z}}
\newcommand{\QQ}{{\bf Q}}
\newcommand{\RR}{{\bf R}}
\newcommand{\CC}{{\bf C}}
\newcommand{\PP}{{\bf P}}
\newcommand{\mL}{{\mathbb L}}
\newcommand{\bSS}{{\mathbb S}}
\newcommand{\cA}{{\mathcal A}}
\newcommand{\cC}{{\mathcal C}}
\newcommand{\cD}{{\mathcal D}}
\newcommand{\cE}{{\mathcal E}}
\newcommand{\cG}{{\mathcal G}}
\newcommand{\cS}{{\mathcal S}}
\newcommand{\cH}{{\mathcal H}}
\newcommand{\cK}{{\mathcal K}}
\newcommand{\cL}{{\mathcal L}}
\newcommand{\cM}{{\mathcal M}}
\newcommand{\cO}{{\mathcal O}}
\newcommand{\cP}{{\mathcal P}}
\newcommand{\cQ}{{\mathcal Q}}
\newcommand{\cU}{{\mathcal U}}
\newcommand{\cV}{{\mathcal V}}
\newcommand{\cW}{{\mathcal W}}
\newcommand{\cT}{{\mathcal T}}
\newcommand{\HK}{{hyper-K\"ahler}}
\newcommand{\tH}{{\tilde{H}}}
\newcommand{\lra}{\longrightarrow}
\DeclareMathOperator{\Pic}{Pic}
\DeclareMathOperator{\Aut}{Aut}
\DeclareMathOperator{\Hom}{Hom}
\DeclareMathOperator{\Hilb}{Hilb}
\DeclareMathOperator{\Br}{Br}
\DeclareMathOperator{\rk}{rk}
\DeclareMathOperator{\sym}{Sym}
\begin{document}

\title{Contractions of hyper-K\"ahler fourfolds and the Brauer group}
\author[B.~van Geemen]{Bert van Geemen}
\address{Dipartmento di Matematica,Universit\`a di Milano, Via Saldini 50, 20133 Milano, Italy.}
\email{lambertus.vangeemen (at) unimi.it}

\author[G.~Kapustka]{Grzegorz Kapustka}

\address{Department of Mathematics and Informatics,
Jagiellonian University, {\L}ojasiewicza 6, 30-348 Krak{\'o}w, Poland.}
\email{grzegorz.kapustka (at) uj.edu.pl}
\thanks{
G.K.\ is supported by the project Narodowe Centrum Nauki 2018/30/E/ST1/00530,
B.v.G.\ was supported by the PRIN 2015 project Geometry of Algebraic Varieties}

\begin{abstract}
We study the geometry of exceptional loci of birational contractions of \HK\ fourfolds
that are of K3$^{[2]}$-type.
These loci are conic bundles over K3 surfaces and we determine their classes in the Brauer group.
For this we use the results on twisted sheaves on K3 surfaces,
on contractions  and on the corresponding Heegner divisors.
For a general K3 surface of fixed degree
there are three (T-equivalence) classes of order two Brauer group elements. 
The elements in exactly two of these classes are represented by conic bundles on such fourfolds. 
We also discuss various examples of such conic bundles.
\end{abstract}
 
\maketitle
\tableofcontents

\section*{Introduction}
A \HK\ variety is a simply-connected smooth projective variety admitting a unique
(up to scalar) non-degenerate holomorphic two-form.
A \HK\ surface is a K3 surface and the divisorial contractions are well-known in this case.
If $E\subset S$ is a prime divisor on a K3 surface $S$ with $E^2<0$,
then $E$ is a smooth rational curve and $E^2=-2$. Moreover, $E$ can be contracted to an ODP point
and any divisorial contraction with prime exceptional divisor is obtained in this way.
Our aim is to discuss an analogous statement for projective \HK\ fourfolds $X$
of K3$^{[2]}$ type. These fourfolds are obtained by deforming the blow-up along the diagonal
of the symmetric square of a K3 surface. The Beauville-Bogomolov-Fujiki
quadratic form on the second cohomology group of $X$ is denoted by $q$.

Let $X$ be a projective fourfold of K3$^{[2]}$ type (appropriately general)
containing a prime divisor $E\subset X$ with $q(E)<0$. Then there is
a primitive big and nef divisor (class) $H$ on $X$ 
such that the map induced by $H$ contracts $E$ to a K3 surface $K$ (possibly after some flops).

$$
\begin{array}{rcl} \phi_{H}:\,X&\longrightarrow&Y\\\cup&&\cup\\
 E&\longrightarrow&K~.
\end{array}
$$

The description of the nef cone of $X$, due to  Hassett-Tschinkel \cite{HTnef}, Markman \cite{Markman_surv}, Bayer–Macr\`\i\ \cite{BM}, Bayer–Hassett–Tschinkel \cite{BayerHT},
and Mongardi \cite{Mongardi}, implies that
there are then two possibilities for $q(E)$,
it is either $-2$ or $-8$; the first
case are the BN (Brill-Noether) contractions whereas the latter case are the
HC (Hilbert-Chow) contractions. In both cases the contraction 
induces a map $p:E\rightarrow K$ which 
is a $\PP^1$-fibration over a K3 surface $K$.
In the HC case this $\PP^1$-fibration is the projectivisation 
of the tangent bundle of $K$, $E=\PP(\cT_K)\rightarrow K$.

In the BN case, the $\PP^1$-fibration $E\rightarrow K$ 
can have a non-trivial Brauer class in $\Br(K)_2$,
the two-torsion subgroup of the Brauer group of $K$.
Then $E\not\cong\PP(\cV)$ for a rank two vector bundle $\cV$ on $K$,
equivalently, there is no rational section of the map
$E\rightarrow K$. 
Using the work of Debarre and Macr\`\i\ \cite{DM}, which is based on a study of the sublattice
of $H^2(X,\ZZ)$ generated by $H$ and $E$,
we find that there are four types of BN contractions. 
These correspond to irreducible components of certain
Heegner divisors in period spaces of polarized \HK\ fourfolds of K3$^{[2]}$ type. 
In two cases one has $E=\PP(\cV)$ where $\cV$ is the Mukai bundle, a rigid stable vector bundle
of rank two on $K$, see Corollary \ref{ko}.
These two cases differ only in the degree of the polarization on $K$.
In the other two cases the fibrations $E\rightarrow K$ have a non-trivial Brauer class.

To determine the Brauer classes, we first show that $X$ is a moduli
space of $\beta$-twisted sheaves on some K3 surface $T$ for some Brauer class $\beta\in \Br(T)_2$.
From the results of Bayer and Macr\`\i\ in \cite{BM} 
we then deduce that $T=K$ and that $\beta$ is also
the Brauer class defined by the $\PP^1$-fibration $E\rightarrow K$, see Theorem \ref{BNBr}.
 
Given a polarized K3 surface $(K,h)$ with $\Pic(K)=\ZZ h$ and $h^2=2d$,
not all two-torsion elements in $\Br(K)$ are obtained as the class of 
a $\PP^1$-fibration $E\rightarrow K$ for a BN exceptional divisor $E$. 
We classify the non-trivial Brauer classes $\alpha\in \Br(K)_2$ as follows (see Theorem \ref{vG}). 
For any B-field $B\in H^2(K,\QQ)$ which represents $\alpha$, 
the intersection number (modulo the integers) $Bh\in\{0,1/2\}$ is an invariant of $\alpha$.
In case $4Bh+h^2\equiv 0\mod 4$ there is a further invariant of $\alpha$
which is $B^2\in\{0,1/2\}$, again modulo the integers.
In case $4Bh+h^2\equiv 2\mod 4$, the value of $B^2$ is not an invariant of $\alpha$ 
but it is convenient to choose $B$ such that $B^2=1/2$.
There are thus three types of order two Brauer classes on a generic polarized K3 surface. 
The type corresponds to the isometry class
of the index two sublattice of the transcendental lattice of $K$ determined by the 
Brauer class.
This refines the classification given in \cite{vG}.

The non-trivial Brauer classes determined by BN exceptional divisors are those with
$B^2=1/2$. 
Using the notation for Heegner divisors which is
explained in Section \ref{heegner}, Theorem \ref{BNBr} and Corollary \ref{ko} imply:

\begin{thm}\label{mainthm}
Let $(K,h)$ be a polarized K3 surface with $\Pic(K)=\ZZ h$ and $h^2=2d$
and let  $\alpha\in \Br(K)_2$ be a non-trivial two-torsion Brauer class.

Then $\alpha$ has a B-field representative with $B^2=1/2$ 
if and only if
there is a conic bundle $E\rightarrow K$ with Brauer class $\alpha$
which is the contraction of an exceptional divisor on a \HK\ fourfold
$X$ of K3$^{[2]}$ type induced by a big and nef divisor class $H$.

The period point of $(X,H)$ lies in $\cD^{(1)}_{2d,8d,\alpha}$ if 
$Bh=1/2$ and in $\cD^{(1)}_{8d,8d,\beta}$ if $Bh=0$.
\end{thm}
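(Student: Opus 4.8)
The plan is to deduce both implications from the lattice-theoretic description of BN contractions in \cite{DM}, the twisted moduli space statement of Theorem~\ref{BNBr}, and the classification of $Br(K)_2$ recalled in Theorem~\ref{vG}, which provides the dictionary between the invariant $B^2\in\{0,\half\}$ and the discrete data of a contraction. Recall from that classification that a non-trivial class $\alpha\in Br(K)_2$ falls into one of three types, distinguished by $Bh\in\{0,\half\}$ and, when it is an invariant, by $B^2\in\{0,\half\}$, and that exactly two of the three admit a B-field representative with $B^2=\half$; these two are the ones we must match with BN conic bundles.

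For the direction ``conic bundle exists $\Rightarrow$ $B^2=\half$'', start from a general \HK\ fourfold $X$ of K3$^{[2]}$ type with a prime divisor $E$, $q(E)<0$, contracted by a big and nef class $H$ to a K3 surface $K$, and suppose $E\to K$ is a conic bundle rather than $\PP\cT_K$; then $q(E)=-2$ and $(X,H,E)$ falls into one of the four BN types read off in \cite{DM} from the rank two sublattice $\langle H,E\rangle\subset H^2(X,\ZZ)$. Two of these are the cases $E=\PP\cV$ with $\cV$ the rank two Mukai bundle (Corollary~\ref{coo}), where the Brauer class is trivial; the other two yield a non-trivial class $\beta$. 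By Theorem~\ref{BNBr}, $X$ is a moduli space of $\beta$-twisted sheaves on a K3 surface which, by the uniqueness results of Bayer--Macr\`\i\ \cite{BM}, is $K$ itself with twisting class $\beta=\alpha$. Corollary~\ref{ko} then identifies, type by type, the index two sublattice of the transcendental lattice of $K$ cut out by $\alpha$, and in each of the two non-trivial BN types it is the one whose B-field representative satisfies $B^2=\half$; hence $\alpha$ has such a representative.

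For the converse ``$B^2=\half$ $\Rightarrow$ conic bundle exists'', reverse this construction. Given non-trivial $\alpha\in Br(K)_2$ with a B-field $B$ such that $B^2=\half$, split according to $Bh=\half$ or $Bh=0$. In each case the polarization $h$ together with the index two sublattice of the transcendental lattice of $K$ determined by $\alpha$ assembles, via the computations behind Theorem~\ref{vG} and the normal forms of \cite{DM}, into the data of a rank two lattice $\langle H,E\rangle$ inside the K3$^{[2]}$ lattice with $q(E)=-2$ and a prescribed pairing of $E$ with $H$; the two values of $Bh$ produce the two families, namely the Heegner divisor $\cD^{(1)}_{2d,8d,\alpha}$ when $Bh=\half$ and $\cD^{(1)}_{8d,8d,\beta}$ when $Bh=0$ (the class is renamed $\beta$ because the polarization induced on $K$ has a different degree in the second case). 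Surjectivity of the period map for polarized \HK\ fourfolds now produces a pair $(X,H)$ with period point on the corresponding Heegner divisor; the Hodge-theoretic control of the movable and nef cones from \cite{DM} guarantees that, after possible flops, $H$ becomes big and nef and contracts a prime exceptional divisor $E$ of the required BN type, so that $E\to K$ is a conic bundle, and Theorem~\ref{BNBr} certifies that its Brauer class is $\alpha$. The position of the period point is then part of the construction, which gives the last assertion of the theorem.

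The step I expect to be the main obstacle is, in this converse direction, guaranteeing that the abstract fourfold produced by surjectivity of the period map actually carries a \emph{prime} exceptional divisor $E$ whose contraction has base exactly $K$ and Brauer class exactly $\alpha$ --- not some other representative of its T-equivalence class, and not a class that only appears after an unexpected wall-crossing or flop. Handling this requires combining the description of exceptional loci and of the wall-and-chamber decomposition of the movable cone from \cite{DM} with the behaviour of twisted moduli spaces and their Brauer classes under birational transformations from \cite{BM}; once this is in place, identifying the Heegner component $\cD^{(1)}$ with the stated subscripts is a finite lattice computation.
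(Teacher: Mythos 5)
Your forward implication follows the paper's own route: Theorem \ref{BNBr} identifies the exceptional divisor of a non-trivial BN contraction with $\PP(\cS)$ for a rank-two $\beta$-twisted sheaf on $T\cong K$, and reading the B-field data off Table \ref{table:heegner} (i.e.\ Corollary \ref{ko}) gives $B^2=1/2$ in both cases. That half is sound and essentially identical to the paper.

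The converse is where your proposal has a genuine gap, and it is exactly the one you flag without closing: producing $X$ by surjectivity of the period map (which, note, holds for the unpolarized period map of possibly non-projective \HK\ fourfolds, not the polarized one) only fixes the Hodge structure on $H^2(X,\ZZ)$, and you are then left to show that the resulting contraction has base isomorphic to the \emph{given} $(K,h)$ with Brauer class the \emph{given} $\alpha$, rather than some other pair with isometric $\Gamma_\alpha$. Deferring this to "the wall-and-chamber decomposition of \cite{DM} combined with \cite{BM}" is not an argument. The paper never invokes period-map surjectivity in this direction: given $(K,h,\alpha)$ and a B-field $B$ with $B^2=1/2$, it sets $v:=(2,2B,0)$ and takes $X:=M_v(K,B)$ outright. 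The arithmetic point your proposal misses entirely is that $v^2=4B^2=2$ \emph{precisely because} $B^2=1/2$, so $M_v(K,B)$ is a fourfold of K3$^{[2]}$ type, and $s:=(2,2B,1)$ has $s^2=-2$, $vs=0$, so $\langle v,s\rangle\cong\langle 2\rangle\oplus\langle -2\rangle$ is the primitive hyperbolic sublattice that by \cite[Thm.~5.7]{BM} yields a BN contraction with exceptional divisor $\PP(\cS)$, where $\cS$ is the unique stable $\alpha$-twisted sheaf with $v^B(\cS)=s$ and the base is $M_{v-s}(K,B)=M_{(0,0,-1)}(K,B)\cong K$. Base and Brauer class are then $K$ and $\alpha$ by construction, and Propositions \ref{2d8dBr} and \ref{8d8dBr} locate the period point. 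Finally, your matching of the two values of $Bh$ with the two Heegner components merely repeats the theorem's last sentence; if you actually run it through Table \ref{table:heegner} you find that a degree-$2d$ base with $h_TB=1/2$ occurs in the column $\cD^{(1)}_{8k,8k,\beta}$ with $k=d$, and $h_TB=0$ in the column $\cD^{(1)}_{2d,8d,\alpha}$, which is the opposite pairing from the one you (and the theorem as stated) assert --- so this assignment must be computed and checked, not quoted.
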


It is easy to show, see Proposition \ref{degreeE}, 
that the exceptional divisor $E$ of a BN contraction has canonical divisor $K_E$ 
with self-intersection number $K_E^3=12$.
Any $\PP^1$-fibration $p:E\rightarrow K$ over a K3 surface $K$
is isomorphic to $E=\PP(\cU)$ for an $\alpha$-twisted locally free rank two sheaf $\cU$ over $K$,
where $\alpha\in \Br(K)_2$ is the Brauer class determined by $E$. 
Using basic properties of twisted sheaves, we found that there exists a non-trivial conic bundle 
$p:E\rightarrow K$ with
$K_E^3=12$ only in case $E$ determines a Brauer class $\alpha$ having a B-field representative
with $B^2=1/2$. 

So the basic properties of twisted sheaves already imply that at most two of the three
types of Brauer classes are obtained from BN exceptional divisors on \HK\ fourfolds of
$K3^{[2]}$ type.

\

A brief outline of the paper is as follows. Basic results on $\PP^1$-fibrations
are recalled in Section \ref{conic}.
There we also discuss examples and methods to compute Chern classes associated to such 
fibrations. 
In Section \ref{bcts} we give the classification of Brauer classes and the basic
properties of twisted sheaves. 
The next section introduces the Heegner divisors and describes the \HK\ fourfolds
admitting contractions. These descriptions are used in Section \ref{BrBNdiv} to prove
the main result Theorem \ref{mainthm}. We also discuss relations between the second
cohomology groups of $X$, $E$ and $K$. In Section \ref{exaHeegner}
we study  examples of conic bundles over K3 surfaces in \HK\ fourfolds and their relations to 
classical constructions. We characterise in this way the Brauer classes of known conic bundles.
In the last section we attempt a description of certain conic bundles over quartic K3 surfaces
but we obtain only partial results. 
By brute force computations we find 
that the Hilbert scheme of two points on the Fermat quartic surface defines
an interesting EPW sextic $Y\subset \PP^5$. In fact the
singular locus of $Y$, which is a smooth surface in general,
is now itself singular at $60$ points.

\section*{Acknowledgments}

We thank O.~Debarre, E.~Macr\`\i\ and P.\ Stellari for helpful discussions.

\section{Conic bundles}\label{conic}

\subsection{The invariants of a conic bundle}\label{chernP}
Let 
$$
p:\,E\,\longrightarrow\,K
$$
be a conic bundle over a K3 surface $K$. We assume that $E$ is everywhere non-degenerate, 
so all fibres are smooth rational curves. We will also refer to $E$ 
as a $\PP^1$-fibration over $K$. 
We compute the basic invariants associated to $E$,
following \cite{Sar}, \cite{CI}.

The relative dualizing sheaf $\omega_{E/K}$, a line bundle on $E$, 
is defined by the exact sequence
$$
0\,\longrightarrow\,p^*\Omega^1_K\,\longrightarrow\, \Omega^1_E\,\longrightarrow \,
\omega_{E/K}\,\longrightarrow\,0~.\qquad (\ast)
$$

Taking determinants we find, since $\omega_K=\cO_K$, that
$$
\omega_E\,\cong\,p^*\omega_K\otimes \omega_{E/K}\,\cong\,\omega_{E/K}~,
$$
so the relative dualizing sheaf is just the canonical bundle.

The Chern classes of $E$ can be computed from the exact sequence above:
$$
c_1(E)\,=\,-K_E,\quad c_2(E)\,=\, p^*c_2(K)\,=24f,\quad 
c_3(E)\,=\,-c_3(\Omega^1_E)=-p^*c_2(K)\omega_E\,=\,48~,
$$
where $f$ be the class of a fiber of $p$ and thus $\omega_Ef=\omega_{E/K}f=-2$ where we used
that $(\omega_{E/K})_{|f}=\omega_f$ (\cite[Prop.\ II.8.10]{HAG}).

\begin{prop} \label{propcw}
For a conic bundle $p:E\rightarrow K$ as above let $\cW:=p_*\omega_{E/K}^{-1}$.
Then $\cW$ is a locally free sheaf of rank three on the K3 surface $K$ and
$$
c_1(\cW)\,=\,0,\qquad K_E^2\,=\, -c_2(\cW)f,\qquad K^3_E\,=\,2c_2(\cW)~.
$$
\end{prop}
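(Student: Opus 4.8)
The plan is to identify $\cW = p_*\omega_{E/K}^{-1}$ with the pushforward of a relatively very ample line bundle and to apply relative Riemann–Roch (or the Grothendieck–Riemann–Roch theorem) to the conic bundle $p$. First I would note that on each fibre $f \cong \PP^1$ we have $(\omega_{E/K}^{-1})_{|f} = \cO_{\PP^1}(2)$, since $\omega_{E/K}f = -2$; hence $p$ is a $\PP^2$-fibration after embedding $E \hookrightarrow \PP(\cW)$ by $\omega_{E/K}^{-1}$, the relative quadrics being the conics. By cohomology and base change, $R^1p_*\omega_{E/K}^{-1} = 0$ and $p_*\omega_{E/K}^{-1}$ is locally free of rank $h^0(\PP^1,\cO(2)) = 3$, which gives the first assertion. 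To get $c_1(\cW)$, push forward the relation that defines $\cW$ or argue directly: the conic $E \subset \PP(\cW)$ is a divisor in the class $2\xi + p^*\delta$ for $\xi = c_1(\cO_{\PP(\cW)}(1))$ and some $\delta \in \mathrm{Pic}(K)$, and adjunction together with $\omega_{E/K}^{-1} = \xi_{|E}$ forces $\delta = -c_1(\cW)$ to be such that everything is consistent with $\omega_{E/K}$ being the canonical bundle of $E$; tracking this shows $c_1(\cW) = 0$.

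The two remaining identities I would extract from the Chern-class data already recorded in the excerpt together with $c_1(\cW) = 0$. From the sequence $(\ast)$ we have $c_1(E) = -K_E$ and $c_2(E) = p^*c_2(K) = 24f$, and $c_3(E) = 48$. Now apply GRR to $p\colon E \to K$ for the sheaf $\omega_{E/K}^{-1}$:
$$
\mathrm{ch}(p_!\omega_{E/K}^{-1}) \,=\, p_*\!\left(\mathrm{ch}(\omega_{E/K}^{-1})\,\mathrm{td}(T_{E/K})\right),
$$
where $T_{E/K}$ is the relative tangent bundle, a line bundle with $c_1(T_{E/K}) = -K_E$ (since $\omega_{E/K} = \omega_E$). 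Expanding $\mathrm{ch}(\omega_{E/K}^{-1}) = e^{-K_E}$ and $\mathrm{td}(T_{E/K}) = 1 - \tfrac12 K_E + \tfrac1{12}K_E^2 + \cdots$, the product in degrees that push forward to $H^0(K)$, $H^2(K)$, $H^4(K)$ gives, after $p_*$ (which drops degree by $2$),
$$
\mathrm{ch}(\cW) \,=\, 3 \,+\, p_*\!\left(\tfrac{13}{12}K_E^2\right) \,+\, \cdots,
$$
so that in particular $c_1(\cW) = p_*(\text{degree-}2\text{ term})$; comparing with $c_1(\cW) = 0$ pins down a relation, and the degree-$4$ part yields $-c_2(\cW) + \tfrac12 c_1(\cW)^2 = \tfrac12 \mathrm{ch}_2(\cW)$, hence $c_2(\cW)$ in terms of $p_*(K_E^3)$ and $p_*(K_E^2 \cdot f)$-type intersection numbers. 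Intersecting the degree-$2$ relation with $f$ and using $K_E f = -2$, $f^2 = 0$ in $H^\ast(E)$ gives $K_E^2 \cdot f$ paired against a class on $E$; matching degrees shows $K_E^2 = -c_2(\cW)\,f$ as classes in $H^4(E,\ZZ)$ (both lie in the rank-one part spanned by $f \cdot (\text{a curve class})$, so it suffices to compare one intersection number). Finally, intersecting $K_E^2 = -c_2(\cW)f$ once more with $K_E$ and using $K_E f = \omega_{E/K}f = -2$ gives $K_E^3 = -c_2(\cW)(K_E f) = 2c_2(\cW)$.

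Alternatively — and this is the route I would actually write up to avoid the bookkeeping above — I would use the projective bundle $\pi\colon \PP(\cW) \to K$ with tautological class $\xi$, $\xi^3 - c_1(\cW)\xi^2 + c_2(\cW)\xi - c_3(\cW) = 0$, and the class of $E$ in it. Since $c_1(\cW) = 0$ this simplifies to $\xi^3 = -c_2(\cW)\xi + c_3(\cW)$ (as an identity modulo $\pi^\ast H^{>0}$). Writing $[E] = 2\xi + \pi^\ast c$ and using that $\omega_{E/K}^{-1} = \xi_{|E}$ is the restriction, the self-intersection computations $K_E^2 = (\xi_{|E})^2$, $K_E^3 = -(\xi_{|E})^3$ reduce to intersection numbers of $\xi$ and $[E]$ on the $5$-fold $\PP(\cW)$, which are immediately computed from the Chern relation; one reads off $c = 0$ from $c_1(\cW) = 0$ and the adjunction $\omega_{E/K} = (\omega_{\PP(\cW)/K} \otimes \cO(E))_{|E}$, then $K_E^2 \cdot [\text{pt on }f] $ and $K_E^3$ come out as $-c_2(\cW) f$ and $2 c_2(\cW)$ respectively.

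The main obstacle is not any single computation but keeping straight which Chern/intersection data live on $E$, on $\PP(\cW)$, and on $K$, and in particular justifying that $K_E^2$ is a multiple of $f$ in $H^4(E,\ZZ)$ (rather than merely numerically so) — this uses that the relevant piece of $H^4(E,\ZZ)$ is spanned by $f$ times a curve class, which follows from the Leray/projective-bundle structure of $E \to K$. Once that is in place, the identifications $K_E^2 = -c_2(\cW)f$ and $K_E^3 = 2c_2(\cW)$ follow by the single intersection with $f$ and then with $K_E$, using $K_E f = -2$ throughout.
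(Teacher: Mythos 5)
Your proposal is correct in substance and rests on the same central computation as the paper: Grothendieck--Riemann--Roch applied to $\omega_{E/K}^{-1}$ along $p$, together with the observation that $\ker\bigl(p_*:H^4(E,\ZZ)\to H^2(K,\ZZ)\bigr)=p^*H^4(K,\ZZ)=\ZZ f$, which converts $p_*(K_E^2)=0$ into $K_E^2=cf$. The difference is in how the two auxiliary facts are obtained. The paper first applies GRR to $\omega_{E/K}$ itself, where $p_!\omega_{E/K}=-[\cO_K]$ is known explicitly; the degree-one comparison there yields $p_*(K_E^2)=0$, hence $K_E^2=c_Ef$, and only then does the second GRR (for $\omega_{E/K}^{-1}$) give $c_1(\cW)=0$ and $c_2(\cW)=-c_E$. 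You reverse the order: you claim $c_1(\cW)=0$ first, from the self-duality/adjunction of the conic embedding $E\subset\PP(\cW)$, and then read $p_*(K_E^2)=0$ off the degree-two term of the single GRR computation. That route is legitimate, but note two things. First, the self-duality argument is exactly the content of the paper's Proposition \ref{reldual}, whose proof as written there \emph{uses} $c_1(\cW)=0$ from Proposition \ref{propcw}; to avoid circularity you must derive $\det(\cW)=\cO_K$ independently, combining $\det(Q):\det\cW\xrightarrow{\sim}\det\cW^\vee\otimes\cL^{-3}$ with the normalization $\omega_{E/K}^{-1}=\xi_{|E}$ and the torsion-freeness of $Pic(K)$ -- your ``tracking this shows $c_1(\cW)=0$'' hides the only delicate step of that route. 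Second, a bookkeeping slip: the degree-four part gives $\mathrm{ch}_2(\cW)=\tfrac12c_1(\cW)^2-c_2(\cW)$ (not ``$=\tfrac12\mathrm{ch}_2(\cW)$''), so with $c_1(\cW)=0$ one gets $-c_2(\cW)=p_*(-\tfrac12K_E^3)=-\tfrac12K_E^3$, i.e.\ $K_E^3=2c_2(\cW)$, and then $c=-c_2(\cW)$ from $K_E^3=c\,(K_Ef)=-2c$. With these two points tightened, your argument is a complete and slightly more economical proof (one GRR instead of two), at the price of importing the geometric input of Proposition \ref{reldual} up front.
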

\ts
We apply Grothendieck-Riemann-Roch (GRR) to the sheaves $\omega_{E/K}^{\pm 1}$ and the map 
$p:E\rightarrow K$.

As $(\omega_{E/K})_{|f}=\omega_f$ which has degree $-2$ on $f\cong\PP^1$,
we have $R^0p_*\omega_{E/K}=0$ and $R^1p_*\omega_{E/K}$ is a line bundle on
$K$. Using \cite[Exc.\ 3.12, p.85]{HM} we find that $R^1p_*\omega_{E/K}=\cO_K$. Thus
$p_!\omega_{E/K}=-[\cO_K]$ and $\mbox{ch}(p_!\omega_{E/K})=-1_K$.
Since $\mbox{td}(\cT_K)=1+2P$, where $P\in K$ is a point, and
GRR states that $\mbox{ch}(p_!\omega_{E/K})\mbox{td}(\cT_K)=
p_*(\mbox{ch}(\omega_{E/K})\mbox{td}(\cT_E))$, we find 
$$
-(1+2P)\,=\,p_*(\mbox{ch}(\omega_{E/K})\mbox{td}(\cT_E))~.
$$
Using that $\omega_{E/K}=\omega_E$ is a line bundle with $c_1(\omega_{E/K})=K_E$
and the results from above we find
{\renewcommand{\arraystretch}{1.5}
$$
\begin{array}{rcl}
\mbox{ch}(\omega_{E/K})&=&1+K_E+\frac{1}{2}K_E^2+\frac{1}{6}K_E^3~,\\
\mbox{td}(\cT_E)&=&1\,-\,\mbox{$\frac{1}{2}$}K_E\,+\,\mbox{$\frac{1}{12}$}(K_E^2+24f)\,-\,fK_E
~.
\end{array}
$$
}
Hence GRR gives
$$
-1\,-\,2P\,=\,
p_*(1\,+\,\mbox{$\frac{1}{2}$}K_E\,+\,\mbox{$\frac{1}{12}$}(K_E^2+24f)\,+\,fK_E)~.
$$
Since $\dim E=3>\dim K=2$ we get $p_*1=p_*1_E=0$. Similarly $p_*f=0$ 
since the one dimensional fiber is contracted to a point in $K$. Next $p_*K_E=(-2)\cdot 1_KX$
since $K_Ef=\deg(K_f)=-2$.
Thus the degree zero terms agree. In degree one we must have 
$0=p_*(\mbox{$\frac{1}{12}$}K_E^2)$, hence 
$$
K_E^2\,=\,c_Ef
$$ 
for some integer $c_E$. 

Next we apply GRR to $\omega_{E/K}^{-1}$.
From $(\omega_{E/K})_{|f}=\omega_f$ we also get $(\omega_{E/K}^{-1})_{|f}\cong\cO_{\PP^1}(2)$, 
in particular $h^0(f,(\omega_{E/K}^{-1})_{|f})=3$ for all fibers of $p$ so 
$\cW=p_*\omega_{E/K}^{-1}$ is locally
free of rank three on $K$. 
Moreover,
$H^1(f,(\omega_{E/K}^{-1})_{|f})=0$ for all fibers of $p$ and thus $R^ip_*(\omega_{E/K}^{-1})=0$
for $i>0$. This implies that $p_{!}\omega_{E/K}^{-1}=p_*\omega_{E/K}^{-1}$ and since
$\omega_{E/K}=\cO_E(-K_E)$, GRR gives
$\mbox{ch}(\cW)\mbox{td}(\cT_K)\,=\,
p_*(\mbox{ch}(\cO_E(-K_E))\mbox{td}(\cT_E))$.
We find:
{\renewcommand{\arraystretch}{1.5}
$$
\begin{array}{rcl}
\mbox{ch}(\cW)&=&3+c_1(\cW)+\frac{1}{2}(c_1(\cW)^2-c_2(\cW))~,\\
\mbox{td}(\cT_K)&=&1+\frac{1}{12}c_2(K)\;=\;1+2P~,\\
\mbox{ch}(\cO(-K_E))&=&1-K_E+\frac{1}{2}K_E^2-\frac{1}{6}K_E^3~,\\
\mbox{td}(\cT_E)&=&1-\mbox{$\frac{1}{2}$}K_E+\mbox{$\frac{1}{12}$}(K_E^2+24f)-fK_E~.
\end{array}
$$
}
The left hand side of GRR is thus:
$$
\mbox{ch}(\cW)\mbox{td}(\cT_K)\,=\,3+c_1(\cW)+(6P+\mbox{$\frac{1}{2}$}(c_1(\cW)^2-c_2(\cW)))~,
$$
whereas the right hand side is:
$$
p_*(\mbox{ch}(\cO_E(-K_E))\mbox{td}(\cT_E))\,=\,p_*\left(
1\,-\,\mbox{$\frac{3}{2}$}K_E\,+\,(\mbox{$\frac{13}{12}$}K_E^2+2f)\,-\,
( \mbox{$\frac{1}{2}$}K_E^3 \,+\,3fK_E)
\right)~.
$$
We already found that $K_E^2=c_Ef$ for some constant $c_E$, hence $p_*K^2_E=p_*f=0$. 
Thus the degree one term on the right hand side is zero and therefore also $c_1(\cW)=0$.
Finally we have $K_E^3=c_EfK_E=-2c_E$ and, identifying a zero cycle with its degree,
$p_*fK_E=fK_E=-2$,
so the last terms on the right hand side are 
$$
p_*(\mbox{$\frac{1}{2}$}K_E^3 \,+\,3fK_E)\,=\,-c_E\,-\,6~.
$$
Comparing with the LHS and using $c_1(\cW)=0$ we get:
$$
6\,-\,c_2(\cW)\,=\,c_E\,+\,6~,\qquad\mbox{hence}\quad
c_2(\cW)\,=\,-c_E~.
$$
Substituting this in $K_E^2=c_Ef$ and $K_E^3=-2c_E$ one finds the last two equalities. \pfs

\subsection{The Brauer class of a conic bundle}\label{gluingB}
Let $E\to K$ be a $\PP^1$-fibration over a K3 surface. Since $\Aut(\PP^1)=PGL_2(\CC)$,
the glueing data for $E$ provides a Cech cocycle which defines
a class $[E]\in H^1(K,PGL_2(\cO_K))$.
From the exact sequence 
$$
0\lra \cO_K^{\ast}\lra GL_2(\cO_K)\lra PGL_2(\cO_K)\lra 0
$$
we obtain a coboundary map $ H^1(PGL_2(\cO_K))\to H^2(\cO_K^{\ast})$.
The image of $[E]$ through this map is a two-torsion element that we denote by 
$\alpha_E\in H^2(\cO_K^{\ast})_{tors}=\Br(K)$, the Brauer group of $K$.
We denote by $\Br(K)_2$ the subgroup of two-torsion elements in $\Br(K)$.

In case $\alpha_E=0$, the $\PP^1$-fibration $E$ is obtained from a class 
in $ H^1(K,GL_2(\cO_K))$. Such a class defines a rank two vector bundle ${\mathcal V}$ 
and $E\cong\PP({\mathcal V})$ is then called a trivial $\PP^1$-fibration or a trivial conic bundle.

Since $E_K\cdot f=-2$, one has $h^0(E,nK_E)=0$ for all $n>0$. We now consider $h^0(E,-K_E)$.

\begin{prop}\label{h0-KE} Let $p:E\rightarrow K$ be a non-trivial $\PP^1$-fibration over a K3 surface 
$K$ with $\rk\, \Pic(K)=1$. Then $h^0(E,-K_E)=0$.
\end{prop}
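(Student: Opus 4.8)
The plan is to compute $h^0(E,-K_E)$ via the Leray spectral sequence for $p:E\to K$ applied to the line bundle $\cO_E(-K_E)=\omega_{E/K}^{-1}$, reducing the question to cohomology on $K$. First I would recall that, as established in the proof of Proposition \ref{propcw}, the restriction of $\omega_{E/K}^{-1}$ to each fibre $f\cong\PP^1$ is $\cO_{\PP^1}(2)$, so $R^1p_*\omega_{E/K}^{-1}=0$ and $\cW:=p_*\omega_{E/K}^{-1}$ is locally free of rank three on $K$ with $c_1(\cW)=0$. Consequently $H^0(E,-K_E)\cong H^0(K,\cW)$, and the whole problem becomes: show that a rank-three bundle $\cW$ with $c_1(\cW)=0$ on a K3 surface $K$ with $Pic(K)=\ZZ h$, arising from a \emph{non-trivial} conic bundle, has no global sections.

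The key geometric input is the relation between $\cW$ and the conic bundle. The projectivisation $\PP\cW\to K$ contains $E$ as a divisor: the conic bundle $E\subset\PP\cW$ is cut out fibrewise by a quadratic form, i.e.\ $E$ is the zero locus of a section of $\cO_{\PP\cW}(2)\otimes p^*\cL$ for some line bundle $\cL$ on $K$, and non-triviality of the conic bundle (no rational section, nonzero $\alpha_E$) forces this quadratic form to be everywhere of maximal rank and, crucially, to \emph{not} be diagonalizable over any open-dense subset in a way compatible with a global splitting — more usefully, non-triviality means $E$ is not of the form $\PP\cV$ for a rank-two bundle $\cV$. I would translate a nonzero section $s\in H^0(K,\cW)=H^0(E,-K_E)$ into an effective divisor $D\in|-K_E|$ on $E$, and then derive a contradiction from its numerical properties: since $K_E\cdot f=-2$ we have $D\cdot f=2$, so $D$ meets each fibre in a length-two subscheme, i.e.\ $D\to K$ is a (possibly branched) double cover, or $D$ contains a fibre component. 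Ruling out fibre components uses $Pic(K)=\ZZ h$ (any vertical divisor is a pullback $p^*(nh)$, which has the wrong intersection with $f$); so $D\to K$ is a genuine double cover, hence splits the conic bundle generically if the cover is reducible, and if irreducible it is a double cover of a K3, which one analyzes via its branch divisor in $|2(\text{something})|$ together with the adjunction/canonical bundle constraint coming from $D\in|{-K_E}|$ on $E$ — a threefold with $K_E^3=2c_2(\cW)>0$ by Proposition \ref{propcw} (note non-triviality should force $c_2(\cW)>0$, so $-K_E$ is not nef but is effective only in a constrained way).

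The main obstacle I anticipate is precisely pinning down why the double cover $D\to K$ cannot exist when the Brauer class is non-trivial: a splitting of $\cW$ or a sub-line-bundle of $\cW$ would be the obvious route (a section $s$ of $\cW$ gives $\cO_K\hookrightarrow\cW$, and if this subsheaf were saturated and its quotient split off a rank-two piece compatible with the quadratic form, one would recover $E\cong\PP\cV$, contradicting non-triviality), but one must control the torsion-freeness of the quotient and the interaction of $s$ with the quadratic form defining $E\subset\PP\cW$. I would handle this by noting that the quadratic form $q$ on $\cW$ (with values in $\cL$) identifies $\cW\cong\cW^\vee\otimes\cL$, so $c_1(\cW)=0$ forces $\cL^{\otimes 3}=\cO_K$, hence $\cL=\cO_K$ as $Pic(K)=\ZZ h$ is torsion-free; then $q$ is an $\cO_K$-valued non-degenerate quadratic form on $\cW$, a global section $s$ with $q(s,s)$ either identically zero (then $s$ spans an isotropic line sub-bundle, whose orthogonal gives a rank-two piece and a section of $E\to K$, contradiction) or not identically zero (then $q(s,s)\in H^0(K,\cO_K)=\CC$ is a nonzero constant, $s$ is a non-isotropic section, $s^\perp$ is a rank-two sub-bundle $\cV$ with a section, again splitting $E\to K$ rationally, contradiction). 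Once the ``$\cL=\cO_K$'' normalization is in place, this dichotomy should close the argument cleanly; I expect the only real work is justifying that the constant $q(s,s)$ cannot be zero without $s$ being zero, which is where $rk\,Pic(K)=1$ (ruling out $s$ vanishing on a curve) and the smoothness/non-degeneracy of $E$ enter.
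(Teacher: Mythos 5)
Your route is genuinely different from the paper's. The paper never passes to $\cW=p_*\omega_{E/K}^{-1}$: it takes the effective divisor $D\in|-K_E|$ cut out by a putative section, computes $h^0(\cO_D)=2$ from the sequence $0\to\cO_E(K_E)\to\cO_E\to\cO_D\to 0$ together with $h^1(K_E)=h^{2,0}(E)=1$, and then derives a contradiction from connectedness: since $Df=2$, either $D$ has two horizontal components (or a non-reduced one), which yields a rational section and hence $E\cong\PP\cV$, or $D$ is linearly equivalent to $D_h+p^*C$ with $D_h$ reduced irreducible horizontal and $C$ an irreducible curve (this is where $rk\,Pic(K)=1$ enters), and the latter divisor is connected and reduced, so has $h^0(\cO)=1$, a contradiction. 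Your reduction to $H^0(K,\cW)$ and the use of the $\cO_K$-valued quadratic form $q$ (with $\cL=\cO_K$ forced by $\det\cW=\cO_K$ and torsion-freeness of $Pic(K)$) is a legitimate alternative, and your isotropic case is complete: a section $s$ with $q(s,s)\equiv 0$ gives the rational section $x\mapsto[s(x)]$ of $E\to K$ directly.

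The non-isotropic case, however, has a gap as written. From $q(s,s)$ a nonzero constant you correctly get a nowhere-vanishing $s$ and an orthogonal splitting $\cW=\cO_K s\oplus\cV$ with $\cV=s^\perp$ of rank two, but the assertion that $\cV$ ``has a section, again splitting $E\to K$ rationally'' does not follow: a section of the rank-two bundle $\cV$ is not a point of the conic, and what you actually need is an \emph{isotropic} sub-line-bundle of $(\cV,q|_{\cV})$. The missing idea is that $E\cap\PP\cV=\{[v]:q(v,v)=0\}$ --- which is precisely the divisor of your section $s$ --- is an \emph{\'etale} double cover of $K$ (the binary form $q|_{\cV_x}$ has two distinct zeros exactly because $s(x)$ does not lie on the conic), and since a K3 surface is simply connected this cover splits into two sections of $E\to K$, giving the contradiction. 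With that step inserted your proof closes, and in fact it then proves the statement without using $rk\,Pic(K)=1$ at all (beyond torsion-freeness of $Pic(K)$); contrary to your closing remark, the rank-one hypothesis is not needed to control $q(s,s)=0$ --- that is your isotropic case and it is already fine --- nor does it enter the \'etale-cover argument.
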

\ts
Assume that $s\in H^0(-K_E)$, $s\neq 0$, and let $D\in |-K_E|$ be the 
effective divisor on $E$ defined by $s$. Then there is the exact sequence:
$$
0\,\lra\,\cO_E(K_E)\,\stackrel{s}{\lra}\,\cO_E\,\lra\,\cO_D\,\lra\,0~.
$$
Recall that $h^0(K_E)=0$, moreover 
$h^1(K_E)=h^{3,1}(E)=h^{2,0}(E)=1$ and $h^1(\cO_E)=h^{0,1}(E)=0$. Hence $h^0(D,\cO_D)=2$.

Since $Df=+2$, at least one (and at most two) of the irreducible components of $D$ map onto $K$. 
If there are two such components or if the unique such component is not reduced, 
then $p$ has a rational section and hence $E=\PP(\cV)$ for a vector bundle $\cV$ on $K$
(cf.\ \cite[Lemma 3.5]{Shramov}) which contradicts the non-triviality $E$.

So we now assume that $D$ has a unique reduced, irreducible,
`horizontal' component $D_h$ mapping onto $K$. 
The support of any other component of $D$ would be the inverse image of a curve in $K$. 
Since $\rk\,\Pic(K)=1$ the divisor $D$ is then linearly equivalent to $D':=D_h+p^*C$ where
$C$ is a (reduced, irreducible) curve in $K$. 
But then $D'\in|-K_E|$ is connected and reduced, contradicting $h^0(\cO_{D'})=2$. 
We conclude that $h^0(E,-K_E)=0$.
\qed

\subsection{The vector bundle $\cW$}
Let $\omega_{E/K}$ be the relative dualizing sheaf of the conic bundle $p:E\rightarrow K$
as in Section \ref{chernP}, it is a line bundle on $E$. 
As in Proposition \ref{propcw} we define
$$
\cW\,:=\,p_*\omega_{E/K}^{-1}~,
$$
then $\cW$ is a locally free sheaf of rank three on the K3 surface $K$ with $c_1(\cW)=0$.
The following proposition is well-known.

\begin{prop}\label{reldual}
The sheaf $\cW$ is a locally free sheaf of rank three on $K$ which is self dual: 
$\cW\cong\cW^\vee$.
There is a quadratic form $Q:\cW\rightarrow \cW^\vee\cong\cW$
such that $E$ is the associated conic bundle of $(\cW,Q)$.

Conversely, given a locally free rank three sheaf $\cE$ on $K$ with a nowhere degenerate 
quadratic form $Q$, let $p:E\subset\PP(\cE)\rightarrow K$ be the conic bundle defined by $Q$.
Then $p_*\omega^{-1}_{E/K}\cong \cE$.
\end{prop}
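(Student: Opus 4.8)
The plan is to reduce everything to the conic bundle $E\subset\PP\cW$ cut out by the section of $\mathrm{Sym}^2\cW^\vee$ (or $\mathrm{Sym}^2\cW$) associated to $Q$, and to identify the relative dualizing sheaf of such a conic in a projective bundle by a direct computation with the adjunction formula and the relative Euler sequence.

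First I would establish the self-duality $\cW\cong\cW^\vee$. The key point is that the restriction of $\omega_{E/K}^{-1}$ to each fiber $f\cong\PP^1$ is $\cO_{\PP^1}(2)\cong\mathrm{Sym}^2\cO_{\PP^1}(1)$, and the fiberwise evaluation map $H^0(f,\cO_f(1))\otimes\cO_f \to \cO_f(1)$ globalizes: the embedding $E\hookrightarrow\PP\cW$ exhibits $\omega_{E/K}^{-1}$ as $\cO_{\PP\cW}(2)|_E$ up to a twist by $p^*(\text{line bundle})$, which one pins down using $c_1(\cW)=0$ (from Proposition \ref{propcw}). Pushing forward $\cO_{\PP\cW}(2)$ along $\PP\cW\to K$ and comparing with $\cW=p_*\omega_{E/K}^{-1}$ then forces the quadratic form $Q\colon\cW\to\cW^\vee$ to be an isomorphism, equivalently $E$ is everywhere nondegenerate, which is our standing assumption. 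This also produces the quadratic form $Q$ in the statement essentially for free: $Q$ is the section of $\mathrm{Sym}^2\cW^\vee\cong\mathrm{Sym}^2\cW$ whose zero locus in $\PP\cW$ is $E$.

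Next, for the converse, I would start from $(\cE,Q)$ with $Q$ nowhere degenerate, form $E\subset\PP\cE$ as the zero scheme of the corresponding section of $\cO_{\PP\cE}(2)\otimes\pi^*(\det\cE)^{-1}$ (or just $\mathcal{O}_{\PP\cE}(2)$ if one normalizes $\det\cE=\cO_K$, but in general we must carry the twist), where $\pi\colon\PP\cE\to K$. Adjunction on the divisor $E\subset\PP\cE$ gives $\omega_E\cong\bigl(\omega_{\PP\cE}\otimes\cO_{\PP\cE}(E)\bigr)|_E$, and the relative version reads $\omega_{E/K}\cong\bigl(\omega_{\PP\cE/K}\otimes\cO(E)\bigr)|_E$. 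Using $\omega_{\PP\cE/K}\cong\cO_{\PP\cE}(-3)\otimes\pi^*\det\cE$ (relative canonical of a $\PP^2$-bundle) and $\cO_{\PP\cE}(E)\cong\cO_{\PP\cE}(2)\otimes\pi^*(\det\cE)^{-1}$, these twists by $\pi^*\det\cE$ cancel and we get $\omega_{E/K}\cong\cO_{\PP\cE}(-1)|_E$, hence $\omega_{E/K}^{-1}\cong\cO_{\PP\cE}(1)|_E$. Then I push forward along $p\colon E\to K$: the structure sequence $0\to\cO_{\PP\cE}(-2)\otimes\pi^*\det\cE\to\cO_{\PP\cE}\to\cO_E\to 0$ twisted by $\cO_{\PP\cE}(1)$ gives, after applying $\pi_*$ and using $\pi_*\cO_{\PP\cE}(1)=\cE^\vee$ (or $\cE$, depending on the $\PP(\cdot)$ convention) together with $\pi_*\cO_{\PP\cE}(-1)=R^1\pi_*\cO_{\PP\cE}(-1)=0$, that $p_*\omega_{E/K}^{-1}\cong\cE^\vee$; self-duality of $\cE$ from $Q$ then yields $p_*\omega_{E/K}^{-1}\cong\cE$, as claimed.

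The main obstacle is bookkeeping the $\PP$-convention and the $\det\cE$ twists consistently so that everything is genuinely canonical rather than merely true up to a line bundle on $K$: one must check that the twist needed to write $E$ as the zero locus of a section of $\cO_{\PP\cE}(2)\otimes\pi^*(\det\cE)^{-1}$ is exactly the one that cancels in adjunction, and that the identification $\omega_{E/K}^{-1}\cong\cO_{\PP\cE}(1)|_E$ is compatible with the quadratic form (so that $Q$ really is recovered from $E$ and vice versa). Grothendieck duality for the finite flat map $p\colon E\to K$ gives an alternative, cleaner route: $p_*\omega_{E/K}\cong\mathcal{H}om_{\cO_K}(p_*\cO_E,\cO_K)$, and $p_*\cO_E$ sits in $0\to(\det\cE)^{-1}\to p_*\cO_E\to\cO_K\oplus(\text{rest})\to\cdots$; I would use whichever of the two computations is shorter once the conventions are fixed. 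Everywhere nondegeneracy of $Q$ is what guarantees $E$ is smooth and $p$ is flat with all the required vanishing, so no further hypotheses are needed.
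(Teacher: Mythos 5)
Your main line of argument is the same as the paper's: embed $E$ by the relative anticanonical map into the $\PP^2$-bundle $\PP\cW$, recognize $E$ as the zero locus of a section of $\cO_{\PP\cW}(2)\otimes p^*\cL$, i.e.\ a quadratic form with values twisted by a line bundle $\cL$ on $K$, use smoothness of all fibres to make $\det Q$ nowhere vanishing and $c_1(\cW)=0$ to force $\cL\cong\cO_K$ (so that $Q:\cW\to\cW^\vee$ is an isomorphism); and, in the converse direction, compute $\omega_{E/K}$ by adjunction on the divisor $E\subset\PP\cE$ using the relative canonical bundle of the $\PP^2$-bundle, then push forward the twisted structure sequence using $\tilde p_*\cO_{\PP\cE^\vee}(1)=\cE^\vee$ and the vanishing of $R^i\tilde p_*\cO_{\PP\cE^\vee}(-1)$ for $i=0,1$. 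This is exactly the paper's proof, and the $\det\cE$ bookkeeping cancels as you anticipate (nondegeneracy of $Q$ gives $\det\cE\cong\det\cE^\vee$, hence $\det\cE\cong\cO_K$ since $Pic(K)$ is torsion free).

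Two local statements in your write-up are wrong, although neither is needed for the route above. First, you assert that the embedding $E\hookrightarrow\PP\cW$ exhibits $\omega_{E/K}^{-1}$ as $\cO_{\PP\cW}(2)|_E$ up to a twist from $K$. Since each fibre of $E$ is a conic of degree two in the fibre $\PP^2$, the restriction of $\cO_{\PP\cW}(1)$ to a fibre of $E$ already has degree $2$; the correct identification, which you do use in the converse direction, is $\omega_{E/K}^{-1}\cong\cO_{\PP\cW}(1)|_E$ up to a pull-back from $K$, and with the exponent $2$ the fibre degrees ($4$ versus $2$) cannot be reconciled by any twist from the base. Second, the proposed ``cleaner alternative'' via Grothendieck duality for a finite flat map does not apply: $p:E\to K$ is a $\PP^1$-fibration, not a finite cover, so $p_*\cO_E=\cO_K$, $p_*\omega_{E/K}=0$ and $R^1p_*\omega_{E/K}=\cO_K$ (as used in the proof of Proposition \ref{propcw}); the formula $p_*\omega_{E/K}\cong(p_*\cO_E)^\vee$ and the filtration of $p_*\cO_E$ you describe are features of finite morphisms (such as the double cover $K\to\PP^2$ appearing elsewhere in the paper) and lead nowhere here. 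The adjunction computation is the right, and essentially the only, route.
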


\ts 
See \cite{Sar}, \cite{CI}.
\pfs

\subsection{Chern classes for conic bundles on a double plane}\label{exac2}
Let $\pi:K\rightarrow\PP^2$ be a double cover branched along a smooth sextic curve $C$ 
defined by $f_6=0$. There are three types of non-trivial conic
bundles on $K$, they are associated to a point of order two, an odd or an even theta characteristic
on $C$ (see \cite{vG}, \cite{IOOV} and Theorem \ref{vG}).

We recall well-known conic bundles $E\rightarrow K$ which have Brauer classes 
of each of these types and we (re)compute the selfintersection $K_E^3$ of their canonical bundle
in two cases, see Proposition \ref{c2order2}, Corollary \ref{c2cubpl} respectively.
We will discuss some implications of these results at the end of \S \ref{tsaa}.

\

\begin{prop} \label{c2order2}
Let $\pi:K\rightarrow\PP^2$ be a general K3 surface of degree two with branch curve $C$.
A point of order two in $\Pic(C)$ defines a conic bundle $p:E\rightarrow K$ 
with $K_E^3=12$ and $c_2(p_*\omega^{-1}_{E/K})=6$.
\end{prop}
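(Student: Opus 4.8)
The plan is to realize the conic bundle $E\to K$ attached to a two-torsion point $\eta\in\mathrm{Pic}(C)[2]$ explicitly and then apply Proposition \ref{propcw}, which reduces everything to computing $c_2(\cW)$ for $\cW=p_*\omega_{E/K}^{-1}$. By Proposition \ref{reldual} it is equivalent to exhibit a self-dual rank three locally free sheaf $\cE$ on $K$ with $c_1(\cE)=0$ together with a nowhere-degenerate quadratic form $Q\in H^0(K,\mathrm{Sym}^2\cE)$, compute $c_2(\cE)$, and then read off $K_E^3=2c_2(\cE)$ and $c_2(\cW)=c_2(\cE)$.

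First I would recall the standard description (from \cite{vG}, \cite{IOOV}): a nonzero two-torsion element $\eta$ on the branch sextic $C\subset\PP^2$ produces, via the Prym/Recillas-type construction, a line bundle $\cL$ on the double cover $\pi:K\to\PP^2$ with $\pi_*\cL$ locally free of rank $2$ on $\PP^2$; writing $\iota$ for the covering involution, the conic bundle is the one cut out on $\PP(\pi^*\pi_*\cO_K)=\PP(\cO\oplus\cO(-3))$-type data, or more intrinsically one has $\cE\cong \cL\oplus\iota^*\cL\oplus(\text{a twist})$ after suitable normalization so that $c_1(\cE)=0$, with $Q$ coming from the natural pairing $\cL\otimes\iota^*\cL\to\cO_K$ together with the branch divisor. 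I would pin down the precise sheaf so that $\det\cE=\cO_K$, then compute $c_2(\cE)$ by pushing the numerical data down to $\PP^2$ and using $\pi_*1=2$, $\deg C=6$, and the Grothendieck–Riemann–Roch / projection-formula bookkeeping for the double cover $\pi$ (so that $c_2$ of a sum of line bundles pulled back or twisted along $\pi$ is expressed through intersection numbers on $\PP^2$ and the class of $C$).

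The concrete computation: with the correctly normalized $\cE$ one finds $c_2(\cE)=6$, hence by Proposition \ref{propcw} $K_E^3=2c_2(\cE)=12$ and $c_2(p_*\omega_{E/K}^{-1})=c_2(\cW)=-c_E=6$. I would cross-check this against the general invariant $K_E^3=12$ expected for BN exceptional divisors (Proposition \ref{degreeE} as cited in the introduction), which gives independent confirmation that the normalization of $\cE$ is the right one and that no sign or twist has been dropped.

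The main obstacle is bookkeeping rather than conceptual: getting the twist in $\cE$ exactly right so that $c_1(\cE)=0$ (which Proposition \ref{reldual} forces, since $\cL$ must be trivial there), and then tracking the contribution of the branch sextic $C$ to $c_2$ through the double cover without an error — in particular keeping straight the difference between $\pi^*\cO_{\PP^2}(1)$ and its square-root-like behavior relative to the ramification divisor $R$ with $2R=\pi^*C$. Once the sheaf is fixed, $c_2(\cE)=6$ drops out of a short intersection-number calculation on $\PP^2$, and Proposition \ref{propcw} finishes the proof. \pfs
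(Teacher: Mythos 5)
Your overall strategy — reduce via Proposition \ref{propcw} and Proposition \ref{reldual} to exhibiting a self-dual rank three bundle $\cE$ on $K$ with $c_1(\cE)=0$ carrying a nowhere degenerate quadratic form, and then computing $c_2(\cE)=6$ — is exactly the paper's. But the construction you propose for $\cE$ cannot work, and this is the heart of the proof. You suggest $\cE\cong\cL\oplus\iota^*\cL\oplus(\text{a twist})$ for a line bundle $\cL$ on $K$. Since $K$ is a \emph{general} degree two K3 surface, $Pic(K)=\ZZ h$ with $h=\pi^*\cO_{\PP^2}(1)$, so every line bundle on $K$ is $\cO_K(n)$ and is $\iota$-invariant; no line bundle on $K$ can remember the two-torsion point $\eta\in Pic(C)$. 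Worse, the numerics rule out any split bundle: for $\cE=\cO_K(a)\oplus\cO_K(b)\oplus\cO_K(c)$ with $a+b+c=0$ one has $c_2(\cE)=2(ab+bc+ca)=-(a^2+b^2+c^2)\leq 0$, so $c_2=6$ is unattainable. The bundle really has to be non-split, and the "cross-check" against $K_E^3=12$ for BN divisors cannot substitute for producing it, since whether this conic bundle arises as a BN exceptional divisor is itself something established only later in the paper.

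What the paper actually does is start on $\PP^2$: by Beauville's theory of symmetric determinantal representations, the order-two line bundle $L$ on $C$ admits a resolution $0\to\cO_{\PP^2}(-4)^3\stackrel{M}{\to}\cO_{\PP^2}(-2)^3\to j_*L\to 0$ with $M$ a symmetric matrix of quadrics and $\det M=f_6$. Pulling back to $K$ gives a quadratic form $\pi^*M$ on $\cE=\cO_K(-1)^3$ that degenerates along $\pi^*C=2C_K$; one then performs an elementary modification $0\to\tilde{\cE}^\vee\to\cE^\vee\to i_*L(3)\to 0$ along the ramification curve $C_K$ (following Debarre--Kuznetsov) to obtain a \emph{non-split} self-dual $\tilde{\cE}$ on which the form is everywhere non-degenerate. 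The Chern class formula for $i_*L(3)$ then gives $c_1(\tilde{\cE})=0$ and $c_2(\tilde{\cE})=c_2(\cE^\vee)=3h^2=6$. Your proposal is missing this determinantal input and the elementary modification, which is precisely where the two-torsion point enters and where the value $6$ comes from.
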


\ts
In view of Proposition \ref{propcw}, it suffices to show that $c_2(p_*\omega^{-1}_{E/K})=6$. 
Let $L\in \Pic(C)$ be of order two, so $L^\vee=L$, where $L^\vee$ is the dual line bundle. 
Let $j:C\hookrightarrow \PP^2$ be the inclusion map.
There is a resolution (\cite{Ca}, the case $e=3$ in 
\cite[Proposition 4.6]{beauville_det})
$$
0\,\lra\,\cO_{\PP^2}(-4)^3\,\stackrel{M}{\lra}\,\cO_{\PP^2}(-2)^3\,\lra\,j_*L\,\lra\,0
$$
with a symmetric matrix $M$. The coefficients $M_{ij}$ of the $3\times 3$-matrix $M$ are 
elements of $\mbox{Hom}(\cO_{\PP^2}(-4),\cO_{\PP^2}(-2))=H^0(\cO_{\PP^2}(2))$. 
The determinant of $M$ is an equation of the sextic curve $C$.

We will use \cite[Section 2]{DK} to define a conic bundle $E\rightarrow K$ as a subvariety of
$\PP( p_*\omega_{E/K}^{-1})$.
Twisting the sequence by $\cO_{\PP^2}(3)$ and pulling it back along the finite 
(hence affine and flat) map $\pi$ to the K3 surface $K$ 
we get the exact sequence, with $\cO_K(d):=\pi^*\cO_{\PP^2}(d)$:
$$
0\,\lra\,\cE\,\stackrel{\pi^*M}{\lra}\,\cE^\vee\,\lra\,\pi^*j_*L(3)\,\lra\,0,
\qquad \cE\,:=\,\cO_K(-1)^3~.
$$
The determinant of this morphism of vector bundles is $\pi^*\det(M)\in H^0(\cO_K(6))$
which has divisor $D=\pi^*C=2C_K$, where $i:C\hookrightarrow C_K\subset K$ is the 
ramification curve, which is isomorphic to $C$. 

The kernel $\cK$ of $\pi^*M$ is the sheaf on $D$ defined as 
$$
\cK\,:=\;\ker(\pi^*M_{|D}\,:\,\cE_{|D}\,\longrightarrow\,\cE_{|D}^\vee)~,\qquad 
D=2C_K=\pi^*C~.
$$
We need to know the restriction of $\cK$ to the curve $C_K$. Since
$\pi:C_K\rightarrow C$ is an isomorphism 
we have an exact sequence:
$$
0\,\lra\,\cK_{|C_K}\,\lra\,\cE_{|C_K}\,\stackrel{M_{|C_K}}\lra\,\cE^\vee_{|C_K}\,\lra\,L(3)\,
\lra\,0~,
$$
where now $L$ is viewed as a line bundle on $C_K$.
Since $M_{|C_K}$ is symmetric, after dualizing the surjection of vector bundles on $C$ above,
we get $L(-3)\hookrightarrow \cE_{|C}\rightarrow\cE^\vee_{|C}$ and thus $\cK_{|C_K}=L(-3)$.

Define a vector bundle $\tilde{\cE}$ on $K$ 
as the dual of the kernel of the surjection 
$$
\tilde{\cE}^\vee\,:=\,\ker(\cE^\vee\,\lra\, \cE_{X|C_K}^\vee\,\lra\,
\cK_{|C_K}^\vee=i_*L(3)\,\lra\,0)~,
$$
so that we have the exact sequence (see \cite[(7)]{DK})
$$
0\,\lra\,\tilde{\cE}^\vee\,\lra\,\cE^\vee\,\lra\,i_*L(3)\,\lra\,0~.
$$
The quadratic form $\pi^*M$ on $\cE^\vee$
defines a nowhere degenerate quadratic form on $\tilde{\cE}$.
The conic bundle $E\rightarrow K$ is defined by this quadratic form on $\tilde{\cE}$.

By Proposition \ref{reldual} we have that $p_*\omega_{E/K}^{-1}=\tilde{\cE}$.
To compute the Chern classes of $\tilde{\cE}$ we 
recall that, with $h:=c_1(\cO_K(1))$, the total Chern class of $\cE^\vee$ is
$$
c(\cE^\vee)=(1+h)^3=1+3h+3h^2\quad\mbox{so}\quad 
c_1(\cE^\vee)\,=\,3h,\quad c_2(\cE^\vee)=3h^2=6~.
$$
Computing the Chern classes of $i_*L(3)$ with \cite[\S 2.1, Lemma 1]{Friedman} we get:
$$
c_1({\cE}^\vee)=c_1(\tilde{\cE}^\vee)+[C_K],\qquad\mbox{hence} \quad
c_1(\tilde{\cE}^\vee)\,=\,c_1({\cE}^\vee)-[C_K]\,=\,0~,
$$
as expected since $\tilde{\cE}$ should be self-dual, and 
$$
c_2(\cE^\vee)=c_2(\tilde{\cE}^\vee)+c_1(\tilde{\cE}^\vee)[C_K]+([C_K]^2-i_*c_1(L(3))),
$$
as $c_1(\tilde{\cE}^\vee)=0$ and $[C_K]^2-i_*c_1(L(3))=(3h)^2-3h\cdot3h=0$, we find
$c_2(\tilde{\cE})=c_2(\tilde{\cE}^\vee)=6$. Using $c_2(\cE)=c_2(\cE^\vee)$ we are done.
\pfs

\subsection{Computing Chern classes of Azumaya algebras}
The following proposition allows us to determine the Chern classes of
some Azumaya algebras on a K3 double plane. The idea is based on \cite{IK} and \cite{Ku35},
but Proposition \ref{chA} simplifies the computations. 

\begin{prop} \label{chA}
Let $\pi:K\rightarrow \PP^2$ be a K3 double cover of $\PP^2$
and let $\cA$ be a coherent sheaf on $K$ of rank $r$ with $c_1(\cA)=0$. 
Then we have $c_1(\pi_*\cA)\,=\,-3rH$, with $H:=c_1(\cO_{\PP^2}(1))$, and:
$$
c_2(\cA)\,=\,-\mbox{$\frac{9}{2}$}r(r-1)\,+\,c_2(\pi_*(\cA))~.
$$
\end{prop}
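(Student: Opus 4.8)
The plan is to compute both sides via Grothendieck–Riemann–Roch (or, more elementarily, via the behaviour of Chern classes under the finite flat degree-two map $\pi$), exploiting that $\pi$ is a double cover of $\PP^2$ branched along the sextic $C$. First I would record the numerical data of $\pi$: since $\pi_*\cO_K=\cO_{\PP^2}\oplus\cO_{\PP^2}(-3)$ (the branch divisor has degree $6=2\cdot 3$), one has $c_1(\pi_*\cO_K)=-3H$ and, pulling back, $\pi^*H\cdot\pi^*H=2H^2$ (degree $2$), while for a zero-cycle on $K$ the pushforward $\pi_*$ is just multiplication of degrees by $1$ and $\pi^*$ of the point class on $\PP^2$ has degree $2$ on $K$. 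These are the only facts about $\pi$ that enter.

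Next I would apply GRR to $\pi$ and the sheaf $\cA$. Since $\pi$ is finite, $R^i\pi_*\cA=0$ for $i>0$, so $\pi_!\cA=\pi_*\cA$ and GRR reads $\mathrm{ch}(\pi_*\cA)\,\mathrm{td}(\cT_{\PP^2})=\pi_*\big(\mathrm{ch}(\cA)\,\mathrm{td}(\cT_K)\big)$. I would expand both sides to degree $2$. On $K$ one has $\mathrm{td}(\cT_K)=1+2P$ with $P$ a point (because $c_1(\cT_K)=0$, $c_2(\cT_K)=24$), so $\mathrm{ch}(\cA)\,\mathrm{td}(\cT_K)=r+c_1(\cA)+\tfrac12(c_1(\cA)^2-2c_2(\cA))+2rP+\dots$; using the hypothesis $c_1(\cA)=0$ this collapses to $r+\big(2r-c_2(\cA)\big)P$ in degrees $0$ and $2$ (the degree-one term vanishes). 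Pushing forward, $\pi_*$ kills nothing in this range and sends the degree-two class of total degree $d$ on $K$ to the degree-two class on $\PP^2$ of the same total degree; hence the right-hand side is $r\cdot 1 + 0\cdot H + \big(2r-c_2(\cA)\big)\,[\mathrm{pt}]$ where I now measure the top class by its degree. On the left, $\mathrm{td}(\cT_{\PP^2})=1+\tfrac32 H+H^2$, and $\mathrm{ch}(\pi_*\cA)=2r+c_1(\pi_*\cA)+\tfrac12\big(c_1(\pi_*\cA)^2-2c_2(\pi_*\cA)\big)+\dots$ (rank of $\pi_*\cA$ is $2r$ since $\pi$ has degree $2$). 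Matching degree $1$ forces $c_1(\pi_*\cA)+2r\cdot\tfrac32 H=0$, i.e.\ $c_1(\pi_*\cA)=-3rH$, as claimed.

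Finally I would match the degree-two terms. Writing $c_1(\pi_*\cA)=-3rH$, the left-hand side in degree two is $\big(\tfrac12(9r^2H^2-2c_2(\pi_*\cA))\big)+\big(-3rH\big)\cdot\tfrac32 H+2r\cdot H^2 = \big(\tfrac{9}{2}r^2-\tfrac{9}{2}r+2r-c_2(\pi_*\cA)\big)H^2$, evaluated as a degree this is $\tfrac92 r^2-\tfrac92 r+2r-c_2(\pi_*\cA)$. Setting this equal to the right-hand side $2r-c_2(\cA)$ and cancelling the $2r$ gives $c_2(\cA)=-\tfrac92 r(r-1)+c_2(\pi_*\cA)$, which is the assertion. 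I expect no genuine obstacle here — the content is bookkeeping — but the step requiring the most care is the accounting of $\pi_*$ on Chow groups: one must be sure that $\pi_*$ acts as the identity-on-degrees for zero-cycles and that there is no contribution to $c_1(\pi_*\cA)$ beyond the constant $-3rH$ forced by the rank and the branch locus; both follow from finiteness and flatness of $\pi$ together with $\pi_*\cO_K=\cO_{\PP^2}\oplus\cO_{\PP^2}(-3)$, and it is worth checking that the hypothesis $c_1(\cA)=0$ (rather than merely torsion) is exactly what kills the stray degree-one term on $K$.
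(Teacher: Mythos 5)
Your proposal is correct and follows essentially the same route as the paper: apply GRR to the finite map $\pi$ and the sheaf $\cA$, use $\pi_!\cA=\pi_*\cA$, expand both sides with $\mathrm{td}(\cT_K)=1+2P$ and $\mathrm{td}(\cT_{\PP^2})=1+\tfrac32H+H^2$, and read off $c_1(\pi_*\cA)=-3rH$ from the degree-one terms and the $c_2$ formula from the degree-two terms. The arithmetic checks out and matches the paper's computation.
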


\ts
Recall that the GRR for a map $\pi:T\rightarrow S$ and a coherent sheaf $\cA$ of rank $r$
on $T$ states:
$$
{\rm ch (\pi_!\cA)\mbox{td}(\cT_S})\,=\,\pi_*(\rm ch (\cA)\mbox{td}(\cT_T))~.
$$
In case $T$ is a K3 surface (so $K_T=-c_1(\cT)=0$ and $c_2(\cT)=24P$, with $P$ a point in $T$) 
and $S=\PP^2$ (so $K_S=-3H$ and $c_2(\cT_S)=3H^2$). 
For a double cover $\pi:T\rightarrow S$ 
we have $f_!\cA=f_*\cA$ since $\pi$ is affine and thus $R^i\pi_*\cA=0$ for $i>0$.
The left hand side of GRR is then 
{\renewcommand{\arraystretch}{1.3}
$$
\begin{array}{rcl}
&&\mbox{ch} (\pi_*\cA)\mbox{td}(\cT_{\PP^2})\\
&=&2r\,+\,(3rH\,+\,c_1(\pi_*(\cA)))\,+\,(2r\,+\,\mbox{$\frac{3}{2}$}c_1(\pi_*(\cA))H\,+\,
\mbox{$\frac{1}{2}$}c_1(\pi_*\cA)^2\,-\,c_2(\pi_*(\cA)))~.
\end{array}
$$
}
The right hand side is
{\renewcommand{\arraystretch}{1.3}
$$
\begin{array}{rcl}
\pi_*(\mbox{ch} (\cA)\mbox{td}(\cT_T))
&=&2r\,+\,\pi_*c_1(\cA)\,+\,(2r\,+\,
\mbox{$\frac{1}{2}$}\pi_*(c_1(\cA))^2\,-\,c_2(\cA))~,
\end{array}
$$
}
where we view the codimension two cycles as integers.

Since $c_1(\cA)=0$ one finds
$
c_1(\pi_*\cA)\,=\,-3rH
$ and
$$
c_2(\cA)\,=\,-\left(\mbox{$\frac{3}{2}$}c_1(\pi_*(\cA))H\,+\,
\mbox{$\frac{1}{2}$}c_1(\pi_*\cA)^2\,-\,c_2(\pi_*(\cA)) \right)~.
$$
Substituting for $c_1(\pi_*\cA)$ and using $H^2=1$, one finds the formula for $c_2(\cA)$.
\pfs

The following corollary computes $K_E^3$ for the conic bundle over the double plane $K$
defined by an odd theta characteristic on the branch curve. These conic bundles are also
obtained from cubic fourfolds with a plane, see \S \ref{fanoplane}. 

\begin{cor}\label{c2cubpl}
Let $\pi:K\rightarrow\PP^2$ be a general K3 surface of degree two with branch curve $C$.
An odd theta characteristic on $C$ defines a conic bundle $p:E\rightarrow K$ with
$K_E^3=12$ and $c_2(p_*\omega^{-1}_{E/K})=6$.
\end{cor}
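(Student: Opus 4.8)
The plan is to realize the conic bundle attached to an odd theta characteristic on $C$ as the sheaf-theoretic conic bundle of an Azumaya algebra on $K$ obtained by pulling back a quaternion Azumaya algebra $\cA_0$ on $\PP^2$, and then apply Propositions \ref{propcw} and \ref{chA} to reduce everything to a single Chern-class computation downstairs on $\PP^2$. Recall (\cite{vG}, \cite{IOOV}) that an odd theta characteristic $\theta$ on the plane sextic $C$ (so $h^0(C,\theta)$ is odd, generically $\theta\cong\cO_C(1)_{|C}(-\text{node})$-type data giving a net of conics through the associated points) produces, via the classical Clifford-algebra construction for nets of conics, an Azumaya algebra $\cA_0$ of rank $4$ on $\PP^2$ whose ramification divisor is $C$; this is precisely the even-Clifford-algebra picture used for cubic fourfolds containing a plane, see \S\ref{fanoplane}. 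Pulling $\cA_0$ back along $\pi:K\rightarrow\PP^2$ gives an Azumaya algebra on $K$, split along the ramification curve $C_K$, and hence (since $Br(K)_2$ is the relevant group) an actual $\PP^1$-fibration $p:E\rightarrow K$ with the prescribed Brauer class.

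The key steps, in order, would be: first, write down the explicit resolution of the odd theta characteristic analogous to the $e$-case of \cite[Prop.\ 4.6]{beauville_det} used in Proposition \ref{c2order2} — for an odd theta characteristic the relevant symmetric matrix has a different shape of entries (e.g.\ a $3\times3$ symmetric matrix of linear forms, rather than quadrics), but it still has determinant $f_6$. Second, form the associated even Clifford algebra $\cA_0$ on $\PP^2$ and compute $c_1(\cA_0)$ and $c_2(\cA_0)$ from that resolution; by the self-duality of $\cA_0$ one has $c_1(\cA_0)=0$, so $c_1(\pi^*\cA_0)=0$ as well. Third, set $\cA:=\pi^*\cA_0$, which is a rank $4$ sheaf on $K$ with $c_1(\cA)=0$, and use the argument in Proposition \ref{c2order2} (restriction to $C_K$, the kernel $\cK_{|C_K}$, the Friedman Chern-class formula \cite[\S2.1, Lemma 1]{Friedman}) together with the fact that $\pi_{|C_K}:C_K\rightarrow C$ is an isomorphism to extract the rank two piece $\tilde\cE$ with $\tilde\cE\cong\tilde\cE^\vee$ that defines $E\rightarrow K$. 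Fourth, invoke Proposition \ref{chA} with $r=3$ applied to $\tilde\cE$ (or equivalently Proposition \ref{reldual}, so that $p_*\omega_{E/K}^{-1}=\tilde\cE$), reducing $c_2(\tilde\cE)$ to $c_2(\pi_*\tilde\cE)$ plus the universal constant $-\tfrac{9}{2}\cdot3\cdot2=-27$. Finally, compute $c_2(\pi_*\tilde\cE)$ on $\PP^2$ directly from the resolution of $\theta$ twisted appropriately, and read off $c_2(\tilde\cE)=6$; then Proposition \ref{propcw} gives $K_E^3=2c_2(\cW)=12$.

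The main obstacle I expect is bookkeeping the twist and the precise form of the resolution for an odd (as opposed to even) theta characteristic: the shape of the symmetric matrix $M$ and the degree of its entries differ, so the twist by $\cO_{\PP^2}(k)$ needed to make $\cE^\vee$ self-dual after pullback, and hence the values $c_1(\cE^\vee)$, $c_2(\cE^\vee)$, have to be recomputed rather than copied from Proposition \ref{c2order2}. A secondary subtlety is checking that the pulled-back Azumaya algebra is genuinely split along $C_K$ and that the resulting $\tilde\cE$ is locally free of rank three with nowhere degenerate quadratic form (so that $E$ is everywhere non-degenerate, as required throughout \S\ref{conic}); this follows because $\pi$ is unramified away from $C_K$ and $\pi^*M$ has determinant vanishing to order exactly two along $C_K$, exactly as in the order-two case. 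Once the resolution is pinned down, everything else is the same GRR computation, so I would expect the answer $c_2(\cW)=6$, $K_E^3=12$ to drop out — matching Proposition \ref{c2order2}, as the statement of the corollary asserts and as is consistent with the general principle in Proposition \ref{degreeE} that every BN exceptional divisor has $K_E^3=12$.
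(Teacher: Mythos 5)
Your overall strategy (reduce everything to a Chern-class computation on $\PP^2$ via Proposition \ref{chA} and a symmetric resolution of the theta characteristic) is the right one, but the concrete plan has a gap at the very first step, and it propagates. An odd theta characteristic $L$ on the sextic $C$ with $h^0(L)=1$ does \emph{not} admit a $3\times 3$ symmetric resolution: a $3\times 3$ symmetric matrix of linear forms has determinant of degree $3$, not $6$, while a $3\times 3$ symmetric matrix of quadrics resolves the $2$-torsion line bundles (that is exactly Proposition \ref{c2order2}). The correct resolution, \cite[Prop.\ 4.2(b)]{beauville_det}, is the $4\times 4$ symmetric matrix $M:\cO_{\PP^2}(-2)^3\oplus\cO_{\PP^2}(-3)\to\cO_{\PP^2}(-1)^3\oplus\cO_{\PP^2}$ with entries of degrees $1,2,3$. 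This defines a quadratic form of rank \emph{four} on $\PP^2$ --- a quadric surface bundle, not a conic bundle --- so the kernel-modification trick of Proposition \ref{c2order2} (which turns a rank-three form on $\PP^2$ degenerating along $2C_K$ into a nondegenerate rank-three form $\tilde{\cE}$ on $K$) does not apply, and there is no rank-three sheaf $\tilde{\cE}$ on $K$ obtained by pullback-plus-elementary-modification from $\PP^2$ for you to feed into Proposition \ref{chA} with $r=3$. (Relatedly, a "quaternion Azumaya algebra on $\PP^2$ with ramification divisor $C$" is an order, not an Azumaya algebra; the genuine Azumaya algebra lives on $K$, not downstairs.)

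What the paper does instead: the conic bundle $E\to K$ arises from the two rulings of the quadric surfaces $M_x$ lying over the two points of $\pi^{-1}(x)$, and its sheaf-theoretic incarnation is the even Clifford algebra $Cl_0(M)$, a rank-$8$ sheaf on $\PP^2$ whose center is $\pi_*\cO_K$ and which therefore equals $\pi_*\cA$ for a rank-\emph{four} Azumaya algebra $\cA$ on $K$. The filtration of $Cl_0(M)$ gives $\pi_*\cA=\cO\oplus\cO(-1)^{\oplus 3}\oplus\cO(-2)^{\oplus 3}\oplus\cO(-3)$, hence $c_2(\pi_*\cA)=60$, and Proposition \ref{chA} with $r=4$ yields $c_2(\cA)=-54+60=6$; finally $\cA=\cO\oplus\cA_0$ with $\cA_0\cong p_*\omega^{-1}_{E/K}$ (\cite[Corollary 6.5]{CI}) transfers this to $c_2(\cW)=6$ and $K_E^3=12$ via Proposition \ref{propcw}. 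Your $r=3$ version is consistent a posteriori (one reads off $\pi_*\cW=\cO(-1)^{\oplus 3}\oplus\cO(-2)^{\oplus 3}$, so $c_2(\pi_*\cW)=33$ and $-27+33=6$), but the only access to $\pi_*\cW$ is through the Clifford-algebra decomposition, which is precisely the ingredient your outline is missing.
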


\ts
Let $L$ be an odd theta characteristic on $C$, so $L\otimes L\cong \omega_C$ and $h^0(L)$ is odd.
Since $K$ is general, $h^0(L)=1$ and $j_*L$, where $j:C\hookrightarrow\PP^2$ 
has a resolution (see \cite[Prop.\ 4.2(b)]{beauville_det})
$$
0\,\lra\,\cO_{\PP^2}(-2)^3\oplus\cO_{\PP^2}(-3)\,\stackrel{M}{\lra}\,
\cO_{\PP^2}(-1)^3\oplus\cO_{\PP^2}\,\stackrel{}{\lra}\,j_*L\,\lra\,0~,
$$ 
where $M$ is a symmetric matrix with $\det(M)=f_6$, a degree six polynomial defining $C$.
Thus $M$ also gives a quadratic form
$$
M:\,\cE:=\,\cO^3\,\oplus\,\cO(-1)\,\lra\,\cO(1),\qquad
M\,=\,\left(\begin{array}{cccc} a_{11}&a_{21}&a_{31}&b_1\\
             a_{21}&a_{22}&a_{23}&b_2\\
             a_{31}&a_{32}&a_{33}&b_3\\
             b_1&b_2&b_3&c
            \end{array}\right)~,
$$
with $a_{ij},b_k,c$ global sections of $\cO(1)$, $\cO(2)$ and $\cO(3)$ respectively.
The rulings of the quadric surface defined by $M_x$ are the fibers of the conic bundle $E$
over the points in $\pi^{-1}(x)$ for $x\in\PP^2$.

Following \cite[p.306-308]{IK}, \cite[\S 3]{Ku35},
the (sheaf of) even Clifford algebra(s) of $M$ is the sheaf 
$$
Cl_0(M)\,=\,\left(\oplus_{k= 0}^2 \cE^{\otimes 2k}\otimes \cO_{\PP^2}(-k)\right)/I~,
$$
where $I$ is generated by the local sections $v\otimes v\otimes\lambda-q(v)\lambda$ 
with $v,\lambda$ local sections of $\cE$ and $\cO_{\PP^2}(-1)$ respectively, so that $q(v)\lambda$
is a local section of $\cO$. The rank of $Cl_0(q)$ as a sheaf of $\cO_S$-modules is $2^{4-1}=8$.  
Using the fact that the center of $Cl_0(M)$ is the rank two sheaf $\pi_*\cO_K$, one can show that 
$\pi_*\cA=Cl_0(M)$, where $\cA$ is the rank four Azumaya algebra on $K$ corresponding to
the conic bundle $E$ defined by the odd theta characteristic.
One then finds that:
$$
\pi_*\cA\,=\,\cO_{\PP^2}\,\oplus\,\cO_{\PP^2}(-1)^{\oplus 3}\,\oplus\,
\cO_{\PP^2}(-2)^{\oplus 3}\,\oplus\,\cO_{\PP^2}(-3)~.
$$
The total Chern class is thus, with $H=c_1(\cO_{\PP^2}(1))$:
$$
c(\pi_*\cA)\,=\,1\cdot (1-H)^3(1-2H)^3(1-3H)\,=\,1\,-\,12H\,+60H^2~,
$$
hence $c_1(\pi_*\cA)=-12H$ and $c_2(\pi_*\cA)=60$.
From Proposition \ref{chA} we get:
$$
c_2(\cA)\,=\,-\mbox{$\frac{9}{2}$}r(r-1)\,+\,c_2(\pi_*(\cA))\,=\,-9\cdot 2\cdot 3\,+\,60=\,6~.
$$
Using the trace map, $\cA=\cO\oplus\cA_0$
and $\cA_0\cong p_*\omega_{E/K}^{-1}$ \cite[Corollary 6.5]{CI}, we get
$c_2(p_*\omega_{E/K}^{-1})=6$ and thus $K_E^3=12$ by Proposition\ \ref{propcw}.
\pfs

\

\section{Brauer classes and twisted sheaves}\label{bcts}

\subsection{Brauer classes and B-fields}\label{BBL}
We introduced the two-torsion subgroup $\Br(K)_2$ of the Brauer group $H^2(\cO_K^*)_{tors}$
of a K3 surface $K$ in \S \ref{gluingB}.
The exponential exact sequence gives the exact sequence
$$
0\longrightarrow H^2(K,\ZZ)/\Pic(K)\longrightarrow H^2(K,\cO_K)
\longrightarrow H^2(\cO_K^*)\longrightarrow 0
$$
where we identify $\Pic(K)=H^1(K,\cO_K^*)$ with its image in $H^2(K,\ZZ)$.
Any $\alpha\in \Br(K)_2$ has a lift $\tilde{\alpha}$ to the complex vector space $H^2(\cO_K)$ and
since $2\tilde{\alpha}\in H^2(K,\ZZ)/\Pic(K)$ there is a
$B=B_\alpha\in \mbox{$\frac{1}{2}$}H^2(K,\ZZ)$ mapping to
$\tilde{\alpha}$. We say that $B$ is a B-field representative of $\alpha$.
If $B,B'$ map to the same $\alpha$ then $B=B'+\mbox{$\frac{1}{2}$}p+c$ for some 
$p\in \Pic(K)$ and some $c\in H^2(K,\ZZ)$ (\cite[\S 4]{HuybrechtsTwist}, \cite[\S 6]{Kuz}). 

We now assume that $\Pic(K)=\ZZ h$ and $h^2=2d>0$.
Since $h^2\in 2\ZZ$, for any $a\in\ZZ$ and $c\in  H^2(K,\ZZ)$ 
the intersection number (in $\mbox{$\frac{1}{2}$}\ZZ$)
$$
Bh\,=\,(B'+\mbox{$\frac{a}{2}$}h+c)h\,\equiv B'h\,\mod\,\ZZ
$$
is an invariant of $\alpha$. Since $2B\in H^2(K,\ZZ)$ we have $Bh\in 
\mbox{$\frac{1}{2}$}\ZZ/\ZZ=\{0,\mbox{$\frac{1}{2}$}\}$.

Kuznetsov gave a geometrical interpretation of this invariant
in terms of the restriction of a conic bundle representing $\alpha$ to curves in $K$
in \cite[Lemma 6.2]{Kuz}. 

Generalizing \cite[Lemma 6.1]{Kuz} to any $d$, we find another invariant of $\alpha$.
Since $\alpha$ is two-torsion, 
the B-field $B$ can be written as $B=B_0/2$ with $B_0\in H^2(K,\ZZ)$.
The lattice $H^2(K,\ZZ)$ is even, so $B^2=B_0^2/4\in \mbox{$\frac{1}{2}$}\ZZ$.

\begin{lem} \label{parity}
Let $K$ be a K3 surface with $\Pic(K) = \ZZ h$ and $h^2 = 2d$.
Let $\alpha\in \Br(K)_2$ and let $B\in \mbox{$\frac{1}{2}$}H^2(K,\ZZ)\subset H^2(K,\QQ)$ 
be a B-field representing $\alpha$.
If $4Bh+h^2\equiv 0\mod 4$, then also $B^2\mod \ZZ$ is an invariant of $\alpha$.
\end{lem}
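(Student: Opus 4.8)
The plan is to analyze how $B^2 \bmod \ZZ$ changes when we replace one B-field representative $B$ of $\alpha$ by another. From the excerpt we know that any two representatives differ by $B' = B + \tfrac{a}{2}h + c$ for some $a \in \ZZ$ and $c \in H^2(K,\ZZ)$, where without loss of generality I can absorb the even part of $a$ into $c$, so it suffices to consider the two cases $a \equiv 0$ and $a \equiv 1 \pmod 2$. When $a$ is even (so the change is by an integral class $c$ plus possibly $h$, i.e.\ again an integral class), we have $B'^2 = B^2 + 2Bc + c^2$; since $2B \in H^2(K,\ZZ)$ and the lattice is even, both $2Bc \in \ZZ$ and $c^2 \in 2\ZZ \subset \ZZ$, so $B'^2 \equiv B^2 \bmod \ZZ$. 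This case imposes no hypothesis and is the easy part.

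The real content is the case $a$ odd, where $B' = B + \tfrac12 h + c$. Then
\begin{equation}
B'^2 \,=\, B^2 + Bh + \tfrac14 h^2 + 2Bc + ch + c^2 .
\end{equation}
As before $2Bc \in \ZZ$, $ch \in \ZZ$, and $c^2 \in 2\ZZ$, so modulo $\ZZ$ we get $B'^2 \equiv B^2 + Bh + \tfrac14 h^2 \pmod{\ZZ}$. Therefore $B^2 \bmod \ZZ$ is an invariant of $\alpha$ precisely when $Bh + \tfrac14 h^2 \in \ZZ$, i.e.\ when $4Bh + h^2 \equiv 0 \pmod 4$ — which is exactly the stated hypothesis. (Note that $Bh \bmod \ZZ$ is already known to be an invariant of $\alpha$, so the condition $4Bh + h^2 \equiv 0 \pmod 4$ is well-posed independently of the choice of $B$.) This completes the argument.

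I expect no genuine obstacle here: the whole proof is a short congruence computation using the evenness of the K3 lattice together with the explicit description of the ambiguity in the B-field. The only point requiring a little care is the bookkeeping of the parity of $a$ — making sure that reducing to $a \in \{0,1\}$ is legitimate and that the even-$a$ case is subsumed by the integral-translate computation — and checking that $h^2 = 2d$ being even is what makes $\tfrac14 h^2$ interact correctly with the integrality condition. I would present the two cases in sequence: first dispose of translation by an integral class, then handle the half-integral shift by $\tfrac12 h$ and read off the hypothesis as the obstruction to invariance.
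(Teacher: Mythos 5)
Your proof is correct and follows essentially the same route as the paper: expand $(B+\tfrac{a}{2}h+c)^2$, discard the integral terms using the evenness of $H^2(K,\ZZ)$ and the integrality of $2B$, and observe that the leftover $Bh+\tfrac14 h^2$ (equivalently $\tfrac{a^2}{4}h^2+aBh$ with $a^2\equiv a\bmod 2$) is an integer exactly under the stated hypothesis. The only cosmetic difference is that you normalize $a\in\{0,1\}$ at the outset, whereas the paper carries a general $a$ and invokes $a^2\equiv a\bmod 2$ at the end.
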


\ts
With the notation as above, $c^2,\,2B'c,\,2\mbox{$\frac{a}{2}$}ch=\,ach\in\ZZ$ and thus
$$
B^2\,=\,(B'+\mbox{$\frac{a}{2}$}h+c)^2
\,\equiv\,(B')^2+\mbox{$\frac{a^2}{4}$}h^2+aB'h\mod\ZZ~.
$$
Since $Bh=B'h\mod \ZZ$, we also have $ 4B'h+h^2\equiv 0\mod 4$,
and thus $\mbox{$\frac{a^2}{4}$}h^2+aB'h\in \ZZ$
since $h^2$ is even and $a^2\equiv a\mod 2$.
\pfs

For a polarized K3 surface $(K,h)$ with $\Pic(K)=\ZZ h$ and $h^2=2d$, a Brauer class 
$\alpha\in \Br(K)_2$, with B-field representative $B$, thus has the invariant 
$Bh\in \mbox{$\frac{1}{2}$}\ZZ/\ZZ$ and:

\noindent
if $d$ is odd and $Bh\not\in\ZZ$ there is a further
invariant $B^2\in \mbox{$\frac{1}{2}$}\ZZ/\ZZ$,

\noindent
if $d$ is even and $Bh\in\ZZ$ there is a further invariant 
$B^2\in \mbox{$\frac{1}{2}$}\ZZ/\ZZ$.

For any $d$ we have thus distinguished three types of Brauer classes of order two.

\

The next lemma guarantees that a class in $\Br(K)_2$
can be represented by a B-field which has good properties.

\begin{lem} \label{reprB}
Let $K$ be a K3 surface with $\Pic(K) = \ZZ h$ and $h^2 = 2d$.
Let $\alpha\in \Br(K)_2$, then there is a B-field 
$B\in \mbox{$\frac{1}{2}$}H^2(K,\ZZ)\subset H^2(K,\QQ)$ 
representing $\alpha$
such that  $2B\in H^2(X,\ZZ)$ is primitive and such that 
$B^2,Bh\in\{0,\mbox{$\frac{1}{2}$}\}\in\QQ$.
\end{lem}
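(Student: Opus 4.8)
The plan is to start from an arbitrary B-field representative $B_0 \in \frac{1}{2}H^2(K,\ZZ)$ of $\alpha$ and modify it using the freedom $B_0 \rightsquigarrow B_0 + \frac{1}{2}p + c$, with $p\in Pic(K)=\ZZ h$ and $c\in H^2(K,\ZZ)$, to bring it into the desired shape. Write $B_0 = \frac{1}{2}v_0$ with $v_0\in H^2(K,\ZZ)$; since $\alpha$ is nontrivial and two-torsion, $v_0\notin 2H^2(K,\ZZ)$, but $v_0$ need not be primitive. First I would address primitivity: let $v_0 = m w$ with $w\in H^2(K,\ZZ)$ primitive. Because $v_0\notin 2H^2$, the multiplicity $m$ is odd, so $m = 2k+1$ and $\frac{1}{2}v_0 \equiv \frac{1}{2}w + k w \pmod{H^2(K,\ZZ)}$; subtracting the integral class $kw$ replaces $B_0$ by $B_1 := \frac{1}{2}w$ with $2B_1 = w$ primitive. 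This step uses only the description of the ambiguity in B-field representatives recalled in \S\ref{BBL} and costs nothing in terms of the invariants $Bh, B^2 \bmod \ZZ$.

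Next I would normalize $Bh$ and $B^2$ modulo the integers. By Lemma \ref{parity} and the discussion following it, $B_1 h \in \{0,\frac12\}$ is already forced to lie in that set (it is an invariant of $\alpha$ valued in $\frac12\ZZ/\ZZ$), and similarly $B_1^2\in\frac12\ZZ/\ZZ$; so what must be arranged is that the actual rational numbers $B_1 h$ and $B_1^2$ lie in $\{0,\frac12\}$ rather than in some other coset representative like $-\frac12$ or $\frac32$. Adding an integral class $c\in H^2(K,\ZZ)$ changes $B_1^2$ by $2B_1 c + c^2$ and $B_1 h$ by $ch$; since $2B_1 = w$ is a primitive integral class, the pairing $w\cdot(-)$ on $H^2(K,\ZZ)$ is surjective onto $\ZZ$ (the lattice $H^2(K,\ZZ)$ is unimodular), so I can choose $c$ so that $2B_1 c$ realizes any prescribed integer shift; together with the freedom in $c^2$ and $ch$ this lets me subtract off the integer parts and land with $B_1 h, B_1^2\in\{0,\frac12\}$ as rational numbers. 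One must be slightly careful to do the $Bh$-normalization and the $B^2$-normalization simultaneously and consistently; here I would exploit that after fixing $c$ to correct $Bh$, the residual correction needed for $B^2$ is by an \emph{integer}, which can be produced by adding a class $c'$ in the sublattice $h^\perp$ (so $c'h = 0$, keeping $Bh$ unchanged) with $2B_1 c' + c'^2$ equal to that integer — possible because $h^\perp$ is an even lattice of signature $(2,19)$, hence represents every sufficiently divisible integer, and in fact, after also allowing the half-period shift $\frac12 h$, every residue; a short case analysis on the parity of $d$ and on the value of $Bh$ (exactly the two cases distinguished right before the lemma) closes this.

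The main obstacle I expect is the simultaneous normalization in the last step: ensuring that the integral class $c$ chosen to fix $Bh$ does not force $B^2$ out of $\{0,\frac12\}$ in a way that cannot be repaired while preserving both primitivity of $2B$ and the value of $Bh$. The cleanest route is to first fix primitivity, then observe that the remaining moves form a coset of an explicit subgroup of $H^2(K,\ZZ)$ acting on the pair $(Bh \bmod \ZZ \text{ lifted}, B^2 \bmod \ZZ \text{ lifted})$, and check by the lattice-theoretic facts above (unimodularity of $H^2(K,\ZZ)$, the structure of $h^\perp$, and the already-established invariance statements of Lemmas \ref{parity}) that every class in that coset contains a representative with both values in $\{0,\frac12\}$. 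Since the invariance of the residues is exactly what Lemmas \ref{parity} and the surrounding discussion provide, the content of Lemma \ref{reprB} is really the existence of a representative at the "canonical" representatives $0$ and $\frac12$ of those residues, plus primitivity of $2B$, and the argument above produces it.
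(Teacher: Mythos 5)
Your strategy --- normalizing within the coset $B+\frac{1}{2}\ZZ h+H^2(K,\ZZ)$ by direct lattice manipulation --- is genuinely different from the paper's proof, which passes to the primitive closure $M_{pr}$ of $\langle 2B,h\rangle$, invokes Nikulin's uniqueness of primitive embeddings to place $M_{pr}$ inside a fixed $U^2\subset U^3\oplus E_8(-1)^2$ with $h=(1,d)_1$, and then normalizes by an explicit computation in four coordinates (plus a separate treatment of the degenerate case $Bh=B^2=0$). Your route can in principle be pushed through, but as written it has a gap at precisely the step you flag as the main obstacle, and the lattice-theoretic justification you offer there is not the relevant one. To realize a prescribed integer $n$ as $2Bc'+(c')^2$ with $c'\in h^\perp$, the obstruction is not whether the even lattice $h^\perp$ represents $n$ (that concerns only the term $(c')^2$, which is always even), but whether the linear functional $c'\mapsto 2Bc'$ takes odd values on $h^\perp$: its image is $e\ZZ$ with $e$ the divisibility of $2B$ on $h^\perp$, and if $e$ is even then $2Bc'+(c')^2$ is always even and no odd $n$ is reachable. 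So your argument needs the additional fact --- true, but not free --- that $e$ is odd whenever $\alpha\neq 0$: writing $2B=(a,b)_1+w'$ with respect to $U_1\oplus U_1^\perp$ and $h=(1,d)_1$, one has $e=\gcd(b-ad,\mathrm{cont}(w'))$, and $e$ even forces $B\in\frac{1}{2}\ZZ h+H^2(K,\ZZ)$, i.e.\ $\alpha=0$. One also needs that the required even integers are represented by a suitable orthogonal complement inside $h^\perp$, which again calls for splitting off a hyperbolic plane via Nikulin or an explicit construction. None of this appears in your proposal; it is the actual content of the lemma rather than a ``short case analysis.''

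Two further points. First, fixing primitivity of $2B$ at the start does not survive the later moves: $2B\mapsto 2B+ah+2c$ remains nonzero modulo $2$ but can acquire odd divisors. This turns out to be harmless once $B^2,Bh\in\{0,\frac{1}{2}\}$ --- a vector with $(2B)^2=2$ or with $(2B)h=1$ is automatically primitive --- but the case $B^2=Bh=0$ still requires a final division by the odd content, exactly as in the paper's proof; so the primitivity argument belongs at the end, not the beginning. Second, the half-period shift by $\frac{1}{2}h$ changes $Bh$ by $d$ and $B^2$ by $Bh+\frac{d}{2}$, so for $d$ odd it moves $B^2$ by a half-integer and must itself be compensated by a further correction of $Bh$; verifying that the combined moves always land both invariants in $\{0,\frac{1}{2}\}$ is a genuine parity computation, not an observation, and it is the part of the argument you have left unexecuted.
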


\ts
In case $\alpha=0\in \Br(K)_2$, we take $B=0$. So we may assume that $\alpha\neq 0$. 
Choose a B-field $B\in\mbox{$\frac{1}{2}$}H^2(K,\ZZ)$ representing $\alpha$.

Since $B\not\in H^2(K,\ZZ)+\mbox{$\frac{1}{2}$}\Pic(K)$,
the sublattice $M:=<2B,h>$ of $H^2(K,\ZZ)$ has rank two.
If $M$ is degenerate, that is $(2B)^2h^2-(2Bh)^2\neq 0$,
$Bh$ must be an integer since $(2B)^2$ and $h^2$ are even integers. Hence $Bh=m$ for some $m\in \ZZ$.
Since $h\in H^2(K,\ZZ)$ is primitive, there is a $t\in H^2(K,\ZZ)$ with $ht=1$ and thus
$(B-mt)h=0$. Since $B,B-mt$ define the same Brauer class, in case
$Bh\equiv 0\mod \ZZ$ we may assume that $Bh=0$.
Then $M$ is non-degenerate except if also $B^2=0$. In that case, write $B=mB'$ for a primitive
$B'\in \mbox{$\frac{1}{2}$}H^2(K,\ZZ)$ and an integer $m$. Since $\alpha\neq 0$, $m$ is odd 
and $B'$ is primitive, also defines $\alpha$, and $B'h=0$, $(B')^2=0$ so we replace $B$ by $B'$.

In the other cases, $M$ is non-degenerate and $M\subset M_{pr}\subset H^2(K,\ZZ)$, 
where $M_{pr}$ is the primitive closure of $M$, which
are all the $v\in H^2(K,\ZZ)$ for which an integer multiple lies in $M$. 
Then $M_{pr}$ is a nondegenerate primitive sublattice of $H^2(K,\ZZ)$. 
By results of Nikulin \cite[Theorem 1.14.4]{N}, the embedding of $M_{pr}$ in $H^2(K,\ZZ)$ is unique up to isometry.
Moreover, there is an isomorphism $H^2(K,\ZZ)\cong U^3\oplus E_8(-1)^2$ 
such that $M_{pr}$ maps to $U^2$.
Since $h\in M_{pr}$ is primitive, we may assume it maps to $(1,d)_1$ (in the first copy of $U$)
where $h^2=2d$. We will write $B=B_1+B_2\in \mbox{$\frac{1}{2}$}(U\oplus U)$.

In case $Bh=0$, one finds that $B_1$ must be  
an integer multiple of $\mbox{$\frac{1}{2}$}(1,-d)=(0,-d)+\mbox{$\frac{1}{2}$}(1,d)$.
Hence $B_1\in H^2(K,\ZZ)+\mbox{$\frac{1}{2}$}\Pic(K)$ and we
may assume that $B=B_2=\mbox{$\frac{1}{2}$}(a,b)_2$ for some $a,b\in\ZZ$. 
Adding a suitable element of $U_2\subset  H^2(K,\ZZ)$ we may assume that $a,b\in\{0,1\}$.
Then $2B$ is primitive in $H^2(K,\ZZ)$, $B^2\in\{0,\mbox{$\frac{1}{2}$}\}$ and we are done.

In case $Bh\equiv \mbox{$\frac{1}{2}$}\mod \ZZ$, write 
$B=\mbox{$\frac{1}{2}$}(a,b)_1+(a',b')_2$ and we may assume that $a,b,a',b'\in\{0,1\}$. 
As $(B-a\mbox{$\frac{1}{2}$}h)_1=\mbox{$\frac{1}{2}$}(0,c)_1$, we may also assume that $a=0$ and 
then $b=1$ (since $B'h\not\equiv 0$). For such a $B$ we do have $Bh=\mbox{$\frac{1}{2}$}$
and $B^2=\mbox{$\frac{1}{2}$}a'b'\in \{0,\mbox{$\frac{1}{2}$}\}$, moreover $2B'$ is primitive
in $H^2(K,\ZZ)$.
\qed

\

\subsection{Brauer classes and lattices}
There is an isomorphism (cf.\ \cite{vG})
$$
\Br(K)\,:=\,H^2(\cO_K^{*})_{tors}\,=\,\Big(H^2(K,\ZZ)/c_1(\Pic(K))\Big)\otimes_\ZZ(\QQ/\ZZ)\,=\,
\mbox{Hom}_\ZZ(T_K,\QQ/\ZZ),
$$
where $T_K:=c_1(\Pic(K))^\perp\subset H^2(K,\ZZ)$ is the transcendental lattice of $K$.
The kernel of a non-trivial $\alpha\in \Br(K)_2$ 
is a sublattice of index two in $T_K$ denoted by $\Gamma_\alpha$:
$$
\Gamma_{\alpha}\,=\, \ker(\alpha:\,T_K\,\longrightarrow\,\ZZ/2\ZZ)~.
$$

In case $\Pic(K)=\ZZ h$, with $h^2=2d$,
we classify elements of order two in the Brauer group by the isomorphism class of 
the lattice $\Gamma_\alpha$ (this is called T-equivalence in \cite{HS_EquivTwisted}).
We find the same three types as before, see Theorem \ref{vG}.

To get a more explicit description of the $\Gamma_\alpha$, we use the notation from
\cite[\S 9]{vG}.
Let $\Pic(K)=\ZZ h$ with $h^2=2d$, then the transcendental lattice $T_K$ is isomorphic to
$T_K\cong\ZZ v\oplus\Lambda'$ where $v^2=-2d$
and $\Lambda'\cong U^2\oplus E_8(-1)^2$.
Any $\alpha\in \Br(K)_2=\mbox{Hom}_\ZZ(T_K,\ZZ/2\ZZ)$ can then be written as:

$$
\alpha\colon T_K\,\longrightarrow\, \ZZ/2\ZZ, \qquad (nv,\lambda)\,\longmapsto\, a_{\alpha}n+\lambda_{\alpha}\lambda\mod\;2~,
$$ 

so $\alpha=(a_\alpha,\lambda_\alpha)$ for a unique $a_{\alpha}\in \{0, 1\}$ and 
a $\lambda_\alpha\in \Lambda'$ whose class 
in $\Lambda'/2\Lambda'\cong(\ZZ/2\ZZ)^{20}$ is uniquely determined by $\alpha$.
We will fix an isomorphism $H^2(K,\ZZ)\cong U\oplus\Lambda'$ such that $h=(1,d)_1$,
$v=(1,-d)_1$, both in the first component $U$. Notice that $(0,1)_1\cdot v=1$
and also $(0,1)_1\cdot h=1$.

A B-field lift $B_\alpha\in H^2(K,\QQ)$ of $\alpha=(a_\alpha,\lambda_\alpha)$ is determined by
$2B_\alpha(nv+\lambda)=a_\alpha n+\lambda\lambda_\alpha\mod 2$.  
Thus we can take
$$
B_\alpha\,=\,\big((0,\mbox{$\frac{a_\alpha}{2}$})_1,\mbox{$\frac{\lambda_\alpha}{2}$}\big)\,\in\,
\mbox{$\frac{1}{2}$}(U\oplus\Lambda'),
\qquad \mbox{hence}\quad B_\alpha h\,=\,\mbox{$\frac{1}{2}$}a_\alpha,\quad 
B_\alpha^2\,=\,\mbox{$\frac{1}{4}$}\lambda_\alpha^2~.
$$

The following theorem completes a similar result in \cite[Prop.~9.2]{vG},
the only difference is for the case $h^2=2d$ with $d$ even;
for these $h$ there are also three cases 
(the two isomorphism classes of lattices $\Gamma_\alpha$ with $a_\alpha=0$ 
were not distinguished in \cite{vG}). In case $d=1$ these classes are well understood in
terms of points of order two and theta characteristics on the branch curve of 
the double cover $\phi_h:K\rightarrow\PP^2$, see also \cite{IOOV}.
For a similar classification of Brauer classes of order $p$ for $p>2$ see
\cite{MSTVA}.

The dual lattice of $\Gamma_\alpha$ is
$\Gamma_\alpha^*:=\mbox{Hom}_\ZZ(\Gamma_\alpha,\ZZ)$ and we identify it with
$$
\Gamma_\alpha^*\,=\,\{x\in \Gamma_\alpha\otimes\QQ:\,x\gamma\in\ZZ,\;
\forall\gamma\in \Gamma_\alpha\,\},\qquad x:y\longmapsto xy~,
$$
where the intersection form on $\Gamma_\alpha\subset H^2(K,\ZZ)$ is extended $\QQ$-bilinearly.
The discriminant quadratic form on the (finite) 
discriminant group $\Gamma^{\ast}_{\alpha}/\Gamma_{\alpha}$ is given by
$$
q_\alpha: \Gamma^{\ast}_{\alpha}/\Gamma_{\alpha} \,\longrightarrow \QQ/2\ZZ,\qquad
q_\alpha(x)\,:=\,x^2~.
$$
The isomorphism class of $\Gamma_\alpha$ is determined
by $q_\alpha$ (\cite[Corollary 1.13.3]{N}).

\begin{thm}(Refinement of \cite[Prop.~9.2]{vG}) \label{vG} 
Let $K$ be a K3 surface with $\Pic(K) = \ZZ h$ and $h^2 = 2d$. 
Then for each $d\in \ZZ_{>0}$, the set of lattices $\Gamma_\alpha$,
with $\alpha\in \Br(K)_2$, $\alpha\neq 0$, is partitioned into three isometry classes.

In the case that $4Bh+h^2\equiv 0\mod 4$,
the isomorphism class of $\Gamma_\alpha$ is determined by
$B_\alpha h\in\mbox{$\frac{1}{2}$}\ZZ/\ZZ$ and $B^2\in\mbox{$\frac{1}{2}$}\ZZ/\ZZ$,
otherwise it is determined by $B_\alpha h\in\mbox{$\frac{1}{2}$}\ZZ/\ZZ$ only.

More explicitly, let $\alpha=(a_\alpha,\lambda_\alpha)\in \Br(K)_2 = \Hom(T_K,\ZZ/2\ZZ), \alpha\neq 0$, 
have B-field representative $B_\alpha$, then:
\begin{enumerate}
\item if $a_{\alpha}= 0$ (equivalently $B_\alpha h\equiv 0)$, then
$\Gamma^{\ast}_{\alpha}/\Gamma_{\alpha}= \ZZ/2d\ZZ \oplus \ZZ/2\ZZ \oplus \ZZ/2\ZZ$. 
There are $2^{20} -1$ Brauer classes $\alpha\in \Br(K)_2$ with $B_\alpha h\equiv 0$. 
\\
a) In case $d$ is even, there are two isomorphism classes of such lattices. One class,
has $2^9(2^{10}+1)-1$ elements and is characterized 
by $\lambda_{\alpha}^2 \equiv 0 \mod 4$, equivalently $B_\alpha^2\equiv 0$.
The other class has $2^9(2^{10}- 1)$ elements and 
is characterized by $\lambda_{\alpha}^2 \equiv 2 \mod 4$, 
equivalently $B_\alpha^2\equiv \mbox{$\frac{1}{2}$}$.
\\
b) In case $d$ is odd, these $2^{20}-1$ lattices are isomorphic to each other.

\item if $a_{\alpha}= 1$ (equivalently $B_\alpha h\equiv \mbox{$\frac{1}{2}$}$),
$\Gamma^{\ast}_{\alpha}/\Gamma_{\alpha}= \ZZ/8d\ZZ$.
There are $2^{20}$ Brauer classes $\alpha\in \Br(K)_2$ with $B_\alpha h=1/2$.\\
a) In case $d$ is even, these $2^{20}$ lattices are isomorphic to each other.\\
b) In case $d$ is odd, there are two isomorphism classes of such lattices. One class,
has $2^9(2^{10}+1)$ elements and is characterized 
by $\lambda_{\alpha}^2\equiv 0 \mod 4$,  equivalently $B_\alpha^2\equiv 0$,
the other class has $2^9(2^{10}- 1)$ elements and 
is characterized by $\lambda_{\alpha}^2 \equiv 2 \mod 4$ 
equivalently $B_\alpha^2\equiv \mbox{$\frac{1}{2}$}$.
\end{enumerate}
\end{thm}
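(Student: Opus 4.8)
The plan is to compute the discriminant form $q_\alpha$ of $\Gamma_\alpha$ directly from the description $T_K\cong\ZZ v\oplus\Lambda'$ with $v^2=-2d$ and $\Lambda'\cong U^2\oplus E_8(-1)^2$, and then invoke Nikulin's theorem (\cite[Corollary 1.13.3]{N}) that the isometry class of an even lattice in a fixed genus is determined by its discriminant form. Write $\alpha=(a_\alpha,\lambda_\alpha)$ as in the excerpt. The first step is to produce a generator set for $\Gamma_\alpha=\ker\alpha$. I would split into the two cases according to $a_\alpha$. When $a_\alpha=1$, $\Gamma_\alpha$ is generated by $2v$ together with the kernel of $\lambda\mapsto\lambda\cdot\lambda_\alpha$ inside $\Lambda'$ shifted appropriately; since $v$ has a relation, one gets $\Gamma_\alpha^\ast/\Gamma_\alpha$ as a cyclic group, and I would check it has order $8d$ by the index computation $[\ T_K:\Gamma_\alpha\ ]=2$ hence $|\mathrm{disc}(\Gamma_\alpha)|=4|\mathrm{disc}(T_K)|=4\cdot 2d=8d$, and that it is cyclic because the "new" discriminant generator $v/4$ (or $(v+\lambda_\alpha)/2$-type element) has order a multiple of $8d$ that already generates everything. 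When $a_\alpha=0$, instead $v$ itself stays in $\Gamma_\alpha$ but we lose an index-two piece of $\Lambda'$, so $\Gamma_\alpha=\ZZ v\oplus\Gamma'$ where $\Gamma'\subset\Lambda'$ has index two, giving $\Gamma_\alpha^\ast/\Gamma_\alpha=\ZZ/2d\ZZ\oplus(\Gamma')^\ast/\Gamma'$ and $(\Gamma')^\ast/\Gamma'\cong(\ZZ/2)^2$ since $\Gamma'$ is an even index-two sublattice of a unimodular lattice.

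Next I would pin down the quadratic form on the $(\ZZ/2)^2$-part in the $a_\alpha=0$ case, and on the $\ZZ/8d$ generator in the $a_\alpha=1$ case, in terms of $\lambda_\alpha^2\bmod 4$. This is the arithmetic heart of the argument. For $a_\alpha=0$: an index-two even sublattice $\Gamma'$ of the unimodular $\Lambda'$ is the kernel of a functional $\lambda\mapsto\lambda\cdot\lambda_\alpha\bmod 2$, and its discriminant form on $\langle\lambda_\alpha/2,\,w\rangle$ (for a suitable dual vector $w$) is the rank-two form with values governed by $(\lambda_\alpha/2)^2=\lambda_\alpha^2/4$; this is $u$ (the even type) precisely when $\lambda_\alpha^2\equiv 0\bmod 4$ and $v$ (the odd type $\langle\tfrac12\rangle\oplus\langle\tfrac12\rangle$, in Nikulin's notation, here in $\QQ/2\ZZ$) when $\lambda_\alpha^2\equiv 2\bmod 4$—this recovers $B_\alpha^2=\tfrac14\lambda_\alpha^2$ as the relevant invariant, matching Lemma \ref{parity}. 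For $a_\alpha=1$ I would show that, after the natural change of generator, the cyclic discriminant form $\ZZ/8d$ also carries the residue of $\lambda_\alpha^2$ in its quadratic value, so that when $d$ is odd the two residues $\lambda_\alpha^2\equiv0,2$ give non-isometric forms, while when $d$ is even a generator rescaling absorbs the difference (because one can modify the generator by something of even norm that shifts $B_\alpha^2$). The bookkeeping here uses exactly the B-field normalization $B_\alpha=((0,a_\alpha/2)_1,\lambda_\alpha/2)$ already established, and Lemma \ref{reprB} to know $B_\alpha^2,B_\alpha h\in\{0,\tfrac12\}$.

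Then the counting statements follow by elementary $\FF_2$-linear algebra on $\Lambda'/2\Lambda'\cong\FF_2^{20}$ equipped with the mod-2 reduction of its quadratic form (which has Arf invariant $0$, as $\Lambda'\cong U^2\oplus E_8(-1)^2$): the number of $\lambda_\alpha$ with $\lambda_\alpha^2\equiv 0\bmod 4$ (resp.\ $\equiv 2$) among the $2^{20}-1$ nonzero classes is $2^9(2^{10}+1)-1$ (resp.\ $2^9(2^{10}-1)$), which is the standard count of isotropic/anisotropic vectors for a $+$-type quadratic form in dimension $20$; the $a_\alpha=1$ case adds one more bit $a_\alpha$ and so doubles the ambient count to $2^{20}$ (here the all-zero $\lambda_\alpha$ is allowed since $\alpha\neq 0$ already), giving $2^9(2^{10}\pm1)$ with no subtraction. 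I expect the main obstacle to be the $a_\alpha=1$ case: correctly identifying when the cyclic discriminant group $\ZZ/8d\ZZ$ "sees" $\lambda_\alpha^2\bmod 4$ versus when a permissible generator change (available exactly when $d$ is even) makes the two forms isometric—this is precisely the subtlety that \cite[Prop.~9.2]{vG} missed and that the present theorem is meant to correct, so the argument must be carried out carefully rather than by analogy. Everything else is standard discriminant-form computation plus Nikulin's uniqueness result.
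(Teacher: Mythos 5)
Your overall strategy coincides with the paper's: compute the discriminant quadratic form $q_\alpha$ on $\Gamma_\alpha^*/\Gamma_\alpha$ from the splitting $T_K\cong\ZZ v\oplus\Lambda'$ and invoke Nikulin's \cite[Cor.~1.13.3]{N}. Your lattice-theoretic setup (for $a_\alpha=0$, $\Gamma_\alpha=\ZZ v\oplus\Gamma'$ with $\Gamma'$ of index two in the unimodular $\Lambda'$; for $a_\alpha=1$, a cyclic discriminant group of order $8d$) and your $\FF_2$-counting of the classes with $\lambda_\alpha^2\equiv 0$ resp.\ $2\mod 4$ are both correct and match the paper. However, there is a genuine gap in case (1), which is precisely the part the paper actually proves (the paper points out that part (2) is already in \cite[Prop.~9.2]{vG}; the new content is the $a_\alpha=0$ case, not the $a_\alpha=1$ case as you suggest at the end). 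For $a_\alpha=0$ you compute the form on the $(\ZZ/2\ZZ)^2$-piece $\langle\lambda_\alpha/2,\mu_\alpha\rangle$ and declare its type ($u$ versus $v$) ``the relevant invariant.'' If that were an invariant of the full discriminant form for every $d$, you would conclude that there are \emph{two} isometry classes for all $d$, contradicting part (1b), which asserts that for $d$ odd all $2^{20}-1$ lattices are isometric. The point you are missing is the interaction with the $\ZZ/2d\ZZ$ summand: the decomposition of the discriminant group is not canonical, and the element $v/2$ has $q_\alpha(v/2)=-d/2$, which is \emph{non-integral exactly when $d$ is odd}. The paper exploits this with the explicit substitution $(a,b,c)\mapsto(a+db,b,a+c)$ (using $d^2\equiv 1\mod 8$), which carries the form with $\lambda_\alpha^2\equiv 0$ to the one with $\lambda_\alpha^2\equiv 2$ when $d$ is odd. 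Conversely, when $d$ is even $q_\alpha(v/2)\in\ZZ$, so whether $q_\alpha$ restricted to the (canonical) two-torsion subgroup takes values in $\ZZ/2\ZZ$ is governed solely by $\lambda_\alpha^2/4$, and this \emph{is} a genuine invariant separating the two classes. Your sketch contains neither the $d$-odd identification nor the observation that the separating invariant must be read off the canonical two-torsion subgroup rather than a chosen rank-two summand.

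Two smaller points. First, ``matching Lemma \ref{parity}'' does not do the work you want it to: that lemma says $B^2\bmod\ZZ$ is well defined as an invariant of the Brauer class $\alpha$ when $4Bh+h^2\equiv 0\mod 4$; it says nothing about whether $B^2$ is detected by the isometry class of $\Gamma_\alpha$, which is the content of the theorem. Second, one must also check that the discriminant form depends only on the class of $\lambda_\alpha$ in $\Lambda'/2\Lambda'$, i.e.\ is unchanged under $\lambda_\alpha^2\mapsto\lambda_\alpha^2+4$; the paper does this with the substitution $c\mapsto b+c$. Your case-(2) discussion (cyclic group $\ZZ/8d\ZZ$, generator rescaling for $d$ even, two classes for $d$ odd) is consistent with the statement, but since that case is quoted from \cite{vG} it is not where the proof needs to be carried out carefully --- case (1b) is.
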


\ts 
Only part (1) is not contained in \cite[Prop.~9.2]{vG}. There it is verified that 
$\Gamma^{\ast}_{\alpha}/\Gamma_{\alpha}= \ZZ/2d\ZZ \oplus \ZZ/2\ZZ \oplus \ZZ/2\ZZ$ 
if $a_{\alpha}= 0$ 
(and $\Gamma^{\ast}_{\alpha}/\Gamma_{\alpha}= \ZZ/8d\ZZ$ else). 
The generators of the dual lattice $\Gamma^{\ast}_{\alpha}\subset\Gamma_\alpha\otimes\QQ$
are $v/2d$, $\lambda_\alpha/2$ and $\mu_\alpha$ for any $\mu_\alpha\in \Lambda'$ with
$\lambda_\alpha\cdot\mu_\alpha=1$. One has, with $(a,b,c)\in\ZZ^3$,
$$
q_\alpha(\mbox{$\frac{a}{2d}$}v+\mbox{$\frac{b}{2}$}\lambda_\alpha+c\mu_\alpha)\,=\,
\mbox{$\frac{a^2}{2d}$}+\mbox{$\frac{b^2\lambda_\alpha^2}{4}$}+bc\,\mod\,2\ZZ~.
$$

The two-torsion subgroup (isomorphic to $(\ZZ/2\ZZ)^3$ of 
$\Gamma^{\ast}_{\alpha}/\Gamma_{\alpha}$ is generated by $v/2$, $\lambda_\alpha/2$ and $\mu_\alpha$
and these elements have coordinates $(da,b,c)\in\ZZ$.

Assume $d$ is even. If $\lambda_\alpha^2\equiv 0 \mod 4$,
then on the two-torsion subgroup $q_\alpha$ takes values in $\ZZ/2\ZZ$, whereas if 
$\lambda_\alpha^2\equiv 2\mod 4$, $q_\alpha(\mbox{$\frac{1}{2}$}\lambda_\alpha)\not\in\ZZ/2\ZZ$.
Thus the lattices $\Gamma_\alpha$ are in (at least) 
two distinct isomorphism classes of lattices.
Changing $\lambda_\alpha^2\mapsto \lambda_\alpha^2+4$ gives an isomorphic
discriminant group, just change $c\mapsto b+c$. 
It follows that there are exactly two isomorphism types of lattices for $d$ even,
they are distinguished by the values of $\lambda_\alpha^2\mod 4$, 
or equivalently by $B_\alpha^2$ modulo integers.

Assume $d$ is odd.  If $\lambda_\alpha^2\equiv 0\mod 8$ one has $q_\alpha=\mbox{$\frac{a^2}{2d}$}+bc$
and if $\lambda_\alpha^2\equiv 2\mod 8$ one has 
$q_\alpha=\mbox{$\frac{a^2}{2d}$}+\mbox{$\frac{b^2}{2}$}+bc$. Changing $(a,b,c)\mapsto (a+db,b,a+c)$
maps the first to the second form (use $d^2\equiv 1\mod 8$),
showing that the discriminant groups are isomorphic. Changing
$\lambda_\alpha^2\mapsto \lambda_\alpha^2+4$ is handled as in the $d$ is even case. 
Thus there is a unique isomorphism type if $d$ is odd and this concludes the proof of (1).
\pfs

\subsection{Moduli spaces of twisted sheaves}\label{twistedmoduli}
We recall the fundamental results on moduli spaces of twisted sheaves 
on a K3 surface $K$. The Mukai lattice $\tH(K,\ZZ)$,
of rank 24, and its bilinear form 
$<\cdot,\cdot>$ are defined by
$$
\tilde{H}(K,\ZZ)\,:=\,H^0(K,\ZZ)\oplus H^2(K,\ZZ)\oplus H^4(K,\ZZ),\qquad
<(r,v,s),(r',v',s')>\,:=\,-rs'-r's+vv'.
$$

We consider a B-field $B\in H^2(K,\QQ)$. 
It defines an isometry (where $\tH(K,\QQ):=\tH(K,\ZZ)\otimes\QQ$):
$$
\exp(B):\tH(K,\QQ)\,\longrightarrow\,\tH(K,\QQ),\qquad
x\longmapsto \exp(B)\wedge x,
$$
where $\exp(B)=(1,B,B\wedge B/2)\in \tH(K,\QQ)$, 
and $\wedge$ indicates the cup product on $\tH(K,\QQ)$, so
$$
\exp(B)\wedge (r,v,s)=(r,v+rB,s+B\wedge v+rB\wedge B/2)~.
$$

The B-field $B$ defines a (polarized) Hodge structure $\tH(K,B,\ZZ)$ of weight two 
on the lattice $\tilde{H}(K,\ZZ)$
as follows (\cite[Def.\ 2.3]{HS_EquivTwisted}):
$$
\tH^{2,0}(K,B):=\CC\omega_{K,B},
\qquad\mbox{where}\quad
\omega_{K,B}:=\exp(B)\omega_K=\omega_K+B\wedge \omega_K,
$$
here $\omega_K$ is a basis of $H^{2,0}(K)$ and
$B\wedge \omega_K\in H^4(K,\CC)$ is the cup product of $B$ and $\omega_K$, next one defines:
$$
\tH^{0,2}(K,B):=\overline{\tH^{2,0}(K,B)},\qquad
\tH^{1,1}(K,B):=
\big(\tH^{2,0}(K,B)\oplus \tH^{0,2}(K,B)\big)^\perp.
$$

The isomorphism class of the Hodge structure
$\tH(K,B,\ZZ)$ depends only on the image $\alpha_B$
of $B$ in the
quotient $H^2(K,\QQ)/(NS(K)_\QQ+H^2(K,\ZZ))=\Br(K)$. 

The trivial Hodge substructure in $\tH(K,B,\ZZ)$ is denoted by
$$
NS(K,B):= \{\,x\in\tH(K,\ZZ)\,:\; <x,\omega_{K,B}>=0\,\}.
$$
i.e.\ $NS(K,B)= \tH(K,B,\ZZ)\cap \tH^{1,1}(K,B)$.
Note that if $x=(r,\lambda,s)$ then
$$
<x,\omega_{K,B}>\;=\;
<(r,\lambda,s),(0,\omega_K,B\wedge\omega_K)>\;=\;
-rB\wedge \omega_K+\lambda\wedge \omega_K=
(-rB+\lambda)\wedge \omega_K.
$$
The kernel of the map $H^2(K,\QQ)\rightarrow H^4(K,\CC)$, 
$\mu\mapsto \mu\wedge\omega_K$ is $NS(K)_\QQ$.
From this it follows that if 
$(-rB+\lambda)\wedge \omega_K=0$
then $-rB+\lambda=D$ for some $D\in \Pic(K)_\QQ$,
hence $\lambda=rB+D$ and $(r,\lambda,s)=r(1,B,0)+(0,D,0)+(0,0,s)$. 
A Brauer class $\alpha$ of order $r$ in $\Br(K)$ has a B-field lift $B\in H^2(K,\QQ)$ 
such that $rB\in H^2(K,\ZZ)$ is primitive and then one finds
$$
NS(K,B)\,=\, 
<(r,rB,0)>\oplus NS(K)\oplus H^4(K,\ZZ)~.
$$  
An element $v\in NS(K,B)$ is called a ($B$-twisted) Mukai vector. 

In analogy with the untwisted case, one defines the
transcendental lattice of $\tH(K,B,\ZZ)$ as:
$$
T(K,B)\,:=\,\{x\in \tH(K,B,\ZZ):\;xy=0\qquad\forall y\in NS(K,B)\,\}.
$$
If $\alpha\in \Br(K)=\Hom(T_K,\QQ/\ZZ)$ is the Brauer class defined by $B$ then
there is an isometry of Hodge structures 
(\cite[Prop.\ 4.7]{HuybrechtsTwist}, where $\Gamma_\alpha$, $T(K,B)$
are denoted by $T(X,\alpha)$ and $T(\phi)$ respectively):
$$
\Gamma_\alpha\,\cong\,T(K,B)~.
$$

Let $\alpha\in \Br(K)$ and let $B$ be a B-field representative for $\alpha$. 
Then there is a twisted Chern character $ch^B$ from the K-group of $\alpha$-twisted coherent
sheaves on $K$ to $NS(K,B)$ and the (twisted) Mukai vector
an $\alpha$-twisted sheaf $E$ is defined as $v^B(E):=ch^B(E)\cdot\sqrt{\mbox{td}(K)}$ 
\cite[\S 4]{HuybrechtsTwist}, \cite{HS-Calderaru-conj}.
If $v=(r,\ell,s)$ is
a primitive $B$-twisted Mukai vector with $r>0$ then
the moduli space $M_v(K,B)$ of $\alpha$-twisted sheaves $E$ with $v^B(E)=v$ 
is an irreducible \HK\ manifold of dimension $2+v^2$
which is deformation equivalent to $K^{[n]}$ with $2n=2+v^2$
(see \cite[Thm.\ 3.16]{Y1}). Moreover, if $v^2>0$ then there is an isometry of Hodge structures
(see \cite[Thm.\ 3.19]{Y1})
$$
H^2(M_v(K,B),\ZZ)\,\cong\, v^{\perp},\qquad NS(M_v(K,B))\,\cong\,v^{\perp}\cap NS(K,B)~.
$$
In particular, the transcendental lattice of $H^2(M_v(K,B),\ZZ))$ is $T(K,B)$.

\subsection{Twisted sheaves and Azumaya algebras}\label{tsaa}
Using twisted sheaves, one finds a convenient description for the Azumaya algebra associated
to a $\PP^1$-fibration. 
In the next proposition we do not assume that the Brauer class is non-trivial
so it holds also for $B=0$. In that case, one has $E=\PP(\cV)$ where $\cV$ is a rank two vector
bundle on $K$ and the Azumaya algebra is $\cA=\cV\otimes\cV^\vee$.
The proposition can be generalized to $\PP^{r-1}$-fibrations and locally
free twisted sheaves of rank $r$ (cf.\ \cite[\S 9]{Kollar}).

In Section \ref{BrBNdiv} we show that an exceptional BN divisor $E$ is a $\PP^1$-fibration
over a K3 surface with $K_E^3=12$. Therefore we compute $K_E^3$ in terms of the Mukai vector
of a twisted sheaf.
Proposition \ref{noU-2} then implies that there are Brauer classes on a general 
K3 surface that do not arise from such exceptional divisors.

\begin{prop}\label{uniqueU}
Let $p:E\rightarrow K$ be a $\PP^1$-fibration over a K3 surface 
with Brauer class $\alpha\in \Br(K)_2$ (as in Section \ref{gluingB}) and let $B$ be a B-field representative of $\alpha$.
Then $E=\PP(\cU)$ for a locally free $\alpha$-twisted sheaf of rank two $\cU$ on $K$,
the Azumaya algebra defined by $E$ is isomorphic to $\cU\otimes\cU^\vee$
and one has:
$$
c_2(\cA)\,=\,v^B(\cU)^2 \,+\,8~,\qquad K_E^3\,=\,2c_2(\cA)~.
$$
\end{prop}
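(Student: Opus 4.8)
The plan is to realise $E$ as the projectivisation of a locally free twisted sheaf $\cU$ of rank two, to identify the Azumaya algebra $\cA$ with the sheaf of endomorphisms of $\cU$, to read off $K_E^3=2c_2(\cA)$ from Proposition~\ref{propcw}, and finally to rewrite $c_2(\cA)$ via the multiplicativity of the twisted Chern character so that it becomes the Mukai square of the twisted Mukai vector $v^B(\cU)$ up to the additive shift $+8$.

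First I would invoke the standard description of $\PP^1$-fibrations by twisted bundles (cf.\ \cite[\S 9]{Kollar} and \cite{CI}). Fixing a Cech B-field representative $B$ of $\alpha$, the class $[E]\in H^1(K,PGL_2(\cO_K))$ lifts to a cocycle with values in $GL_2$ twisted by $\alpha$, so $E=\PP\cU$ for a locally free $\alpha$-twisted sheaf $\cU$ of rank two on $K$. Since $\alpha$ is two-torsion, the dual $\cU^\vee$ is again $\alpha$-twisted, with B-field $-B$, so $\cA:=\cU\otimes\cU^\vee$ is an honest sheaf of $\cO_K$-algebras — the sheaf of endomorphisms of $\cU$ — which is Azumaya of rank four, has Brauer class $\alpha$, and has $\PP\cU=E$ as its Severi--Brauer (conic) bundle; hence it is the Azumaya algebra defined by $E$. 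The trace map then splits it as $\cA=\cO_K\oplus\cA_0$ with $\cA_0\cong p_*\omega_{E/K}^{-1}=\cW$, exactly as in the proof of Corollary~\ref{c2cubpl} (using \cite[Cor.\ 6.5]{CI}). When $\alpha=0$ this reduces to the untwisted situation $E=\PP\cV$, $\cA=\cV\otimes\cV^\vee$ already mentioned before the statement.

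Granting this, the equality $K_E^3=2c_2(\cA)$ is immediate: Proposition~\ref{propcw} gives $K_E^3=2c_2(\cW)$, while $\cA=\cO_K\oplus\cA_0$ with $\cA_0\cong\cW$ yields $c(\cA)=c(\cW)$, so $c_2(\cA)=c_2(\cW)$. For the first equality I would compute $ch(\cA)$ with twisted Chern characters (\cite[\S 4]{HuybrechtsTwist}, \cite{HS-Calderaru-conj}). Write $ch^B(\cU)=(2,a,b)\in H^{*}(K,\QQ)$ with $a\in H^2(K,\QQ)$ and $b\in H^4(K,\QQ)$. Since $ch^{-B}(\cU^\vee)=ch^B(\cU)^\vee=(2,-a,b)$ and the twisted Chern character is multiplicative, one gets $ch(\cA)=ch^{B}(\cU)\cdot ch^{-B}(\cU^\vee)=(2,a,b)(2,-a,b)=(4,\,0,\,4b-a^2)$ as an ordinary (untwisted) class, whence $c_1(\cA)=0$ and $c_2(\cA)=a^2-4b$. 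On a K3 surface $\sqrt{\mbox{td}(K)}=(1,0,1)$, so $v^B(\cU)=(2,a,b)(1,0,1)=(2,a,b+2)$, and with the Mukai pairing $\langle (r,\lambda,s),(r,\lambda,s)\rangle=\lambda^2-2rs$ this gives $v^B(\cU)^2=a^2-4(b+2)=c_2(\cA)-8$. Rearranging yields $c_2(\cA)=v^B(\cU)^2+8$, and combined with the previous step $K_E^3=2c_2(\cA)$.

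The main obstacle is the twisted-sheaf bookkeeping rather than the arithmetic: one must check carefully that $E=\PP\cU$ for a globally defined locally free $\alpha$-twisted sheaf $\cU$, that this sheaf of endomorphisms really coincides with the Azumaya algebra attached to $E$ (and with the even Clifford algebra construction used in the earlier sections), and that $ch^{B}(\cU\otimes\cU^\vee)=ch^{B}(\cU)\,ch^{-B}(\cU^\vee)$ indeed lands in ordinary integral cohomology. It is worth noting, finally, that $v^B(\cU)^2$ does not depend on the choices involved: replacing $B$ by another B-field for $\alpha$, or $\cU$ by $\cU\otimes\cL$ for a line bundle $\cL$ (which leaves $\PP\cU$ unchanged), alters $v^B(\cU)$ only by an isometry of the Mukai lattice, so the stated formula is well posed.
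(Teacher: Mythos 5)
Your proposal is correct and follows essentially the same route as the paper: realise $E=\PP\cU$ via lifting the $PGL_2$-cocycle, identify the Azumaya algebra with $\cU\otimes\cU^\vee$ and split off $\cA_0\cong p_*\omega_{E/K}^{-1}$ using \cite[Cor.\ 6.5]{CI} to get $K_E^3=2c_2(\cA)$ from Proposition \ref{propcw}, then compute $c_2(\cA)$ from the multiplicativity of the twisted Chern character. The only cosmetic difference is that the paper routes the identification $\cA\cong\cU\otimes\cU^\vee$ through the relative Euler bundle $\cG$ on $E$ and the projection formula, whereas you assert it directly as the Severi--Brauer correspondence; the arithmetic with $ch^B(\cU)=(2,a,b)$ is identical.
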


\begin{proof}
The conic bundle $E$ is locally trivial (in the complex topology), 
so for a suitable open covering $K=\cup_iU_i$ we have $E_i:=p^{-1}(U_i)\cong U_i\times\PP^1$
and the gluing is by isomorphisms $\phi'_{ij}\in PGL(2,\cO_K({U_i\cap U_j}))$
which can be lifted to $\phi_{ij}\in GL(2,\cO_K({U_i\cap U_j}))$.
These $\phi_{ij}$ define a cocycle $\alpha_{ijk}\in\Gamma(U_i\cap U_j\cap U_k,\cO_K^*)$ which 
represents the Brauer class $\alpha$ of $E$. The $\phi_{ij}$ also define a locally free twisted sheaf
$\cU$ on $K$ with B-field $B$ which also determines $\alpha$ and $E=\PP(\cU)$
(cf.\ \cite[\S 1]{Y1}, \cite[\S 1]{HS_EquivTwisted}).

According to \cite[Thm.~1.3.5]{C} and \cite{dJ}, for any Azumaya algebra $\cA$ over $K$
with two torsion Brauer class $\alpha$ there
is a locally free $\alpha$-twisted sheaf of rank two $\cU$ such that 
$\cA \simeq \mathcal{E}nd(\cU)$. 
More explicitly, let $\cG_i:=p_*\cO_{E_i}(1)$, where $\cO_{E_i}(1)$ is the pull-back of $\cO_{\PP^1}(1)$ 
along the projection $E_i=U_i\times\PP^1\rightarrow \PP^1$. 
Then $\cG_i\cong\cO_{U_i}\otimes H^0(\PP^1,\cO_{\PP^1}(1))$, a locally free sheaf of rank two,
and $E_i=\PP(\cG_i^\vee)$. The $p^*\cG_i$ are locally free sheaves on $E_i$ which glue after twisting by a collection of tautological line bundles to
a locally free sheaf of rank two $\cG$ on $E$ (\cite[\S 1.1]{Y1}). This sheaf sits in the (non-split) 
relative Euler sequence on $E$:
$$
0\,\lra\,\cO_E\,\lra\,\cG\,\lra\,\omega_{E/K}^{-1}\,\lra\,0
$$
(here $\omega_{E/K}^{-1}$ is usually written as $\cT_{E/K}$). Such extensions are parametrized
by $H^1(E,\omega_{E/K})\,(\cong H^1(E,\omega_K)=H^{3,1}(E))$ which is one dimensional, hence the exact sequence
characterizes $\cG$. According to \cite[\S 6]{CI}, where $\cG$ is denoted by $J$, the Azumaya algebra
on $K$ defined by $E$ is $\cA:=p_*\cE nd_E(\cG)=p_*(\cG^\vee\otimes\cG)$.

Using the trace map, $\cA=\cO\oplus\cA_0$
and $\cA_0\cong p_*\omega_{E/K}^{-1}$ \cite[Corollary 6.5]{CI}.
Hence $K_E^3=2c_2(\cA_0)=2c_2(\cA)$ by Proposition \ref{propcw}.

Now from \cite[p.~5]{Y1}  we have $\cU\otimes \cU^{\vee}\simeq \cA$.
Let $ch^B(\cU)=(r,\lambda,s)$, with $r=2$, be the twisted Chern character of $\cU$. Then 
$ch^{-B}(\cU^\vee)=(r,-\lambda,s)$ and thus 
$$
ch(\cA)\,=\,ch^B(\cU)\cdot ch^{-B}(\cU^\vee)\,=\,(r^2,0,2rs-\lambda^2),\qquad
\mbox{hence}\quad c_2(\cA)\,=\,-2rs\,+\,\lambda^2~.
$$
On the other hand, $v^B(\cU)=ch^B(\cU)\sqrt{\mbox{td}(K)}=(r,\lambda,r+s)$, hence
$v^B(\cU)^2=-2r(r+s)+\lambda^2$. 
\end{proof}

\

As a consequence, if $\cU$ is stable then the moduli space of deformations of $\cU$ has dimension
$v^B(\cU)^2+2=c_2(\cA)-6$, which agrees with \cite[Thm 3.6]{HoffmanStuhler}.
In the `extreme' case that $v^B(\cU)^2=-2$ (equivalently, $c_2(\cU\otimes\cU^\vee)=6$),
the following proposition shows that the stability of $\cU$ is automatic and thus $\cU$ is rigid.

\begin{prop}\label{stableU}
Let $(K,h)$ be a polarized K3 surface of degree $h^2=2d$ and assume that $\Pic(K)=\ZZ h$.
Let $\cU$ be a locally free $\alpha$-twisted sheaf of rank two with non-trivial
Brauer class $\alpha\in \Br_2(K)$ represented by a B-field $B$ 
and assume that $v^B(\cU)^2\leq 0$. Then $\cU$ is stable. 
\end{prop}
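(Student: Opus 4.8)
The plan is to argue by contradiction: if $\cU$ is not stable, there is a destabilizing $\alpha$-twisted subsheaf, and I want to show this forces $v^B(\cU)^2 > 0$. First I would pass to the reflexive (hence, on a smooth surface, locally free) hull and use the standard reduction that it suffices to rule out a saturated destabilizing subsheaf $\cF\subset\cU$ of rank one, i.e.\ a twisted line bundle. Since $\cU$ is $\alpha$-twisted of rank two and $\alpha$ is a \emph{two}-torsion class, $\cF$ cannot itself carry the twisting (a rank-one twisted sheaf with non-trivial Brauer class does not exist), so in fact a rank-one subsheaf only makes sense after observing that $\cF^{\otimes 2}$, or rather $\det\cU$, is untwisted; concretely I would work with the quotient $\cQ:=\cU/\cF$ which is again twisted of rank one. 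The cleanest formulation: there is no rank-one $\alpha$-twisted sheaf, so any saturated subsheaf of rank one of $\cU$ would have to be untwisted, which is impossible unless $\cU$ itself splits off an untwisted line bundle — and then $\alpha=0$, contradicting non-triviality. Hence every nonzero proper saturated subsheaf has rank one but is forced to be twisted, which cannot happen; I need to be careful here and instead run the argument with $\cF$ a twisted sheaf of rank one in the sense that $\cU\otimes\cL$ is genuinely $\alpha$-twisted for an ordinary line bundle $\cL$ — the subsheaves of the untwisted rank-two sheaf $\cU\otimes\cL$ over the corresponding gerbe.

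Concretely, then, I would reason as follows. Suppose $\cF\subset\cU$ is a saturated destabilizing $\alpha$-twisted subsheaf of rank one; then $\cQ=\cU/\cF$ is a rank-one $\alpha$-twisted sheaf, torsion-free since $\cF$ is saturated. Write the twisted Mukai vectors $v^B(\cF)=(1,a,s_1)$ and $v^B(\cQ)=(1,b,s_2)$ in $NS(K,B)\otimes\QQ$, so that $v^B(\cU)=v^B(\cF)+v^B(\cQ)=(2,a+b,s_1+s_2)$ and, since both summands are twisted Mukai vectors of twisted sheaves, $v^B(\cF)^2\ge -2$ and $v^B(\cQ)^2\ge -2$ by Riemann--Roch for twisted sheaves on a K3 surface (the $\ge -2$ is exactly $\chi$-type positivity / the fact that a twisted sheaf with $v^2<-2$ has no moduli). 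Then
$$
v^B(\cU)^2 \;=\; v^B(\cF)^2 + 2\,v^B(\cF)\cdot v^B(\cQ) + v^B(\cQ)^2
\;\ge\; -4 \;+\; 2\,v^B(\cF)\cdot v^B(\cQ).
$$
The heart of the matter is to bound the cross term $v^B(\cF)\cdot v^B(\cQ)$ from below. Here I use that $\cF$ destabilizes, i.e.\ the twisted slope of $\cF$ is at least that of $\cU$, equivalently $(a-b)\cdot h \ge 0$, and I use that $\Hom$ (or $\Ext$) between the two twisted sheaves controls the pairing. Since there is a nonzero twisted map $\cF\to\cU$ and (after restricting scalars) the pair $(\cF,\cQ)$ sits in a nonsplit extension realizing $\cU$, the key input is a lower bound of the form $v^B(\cF)\cdot v^B(\cQ)\ge 1$, which will come from $\hom(\cF,\cQ)+\hom(\cQ,\cF)>0$ together with twisted Serre duality: $\chi(\cF,\cQ)+\chi(\cQ,\cF) = -2\,v^B(\cF)\cdot v^B(\cQ)$, and the left side is $\le \ext^1(\cF,\cQ)+\ext^1(\cQ,\cF)$ only after sign-watching, so I really want $v^B(\cF)\cdot v^B(\cQ)\le -1$ would give $v^B(\cU)^2\le -6$, the wrong direction — so instead I must show $v^B(\cF)\cdot v^B(\cQ)\ge 1$ using the fact that $Pic(K)=\ZZ h$ forces the algebraic parts of $v^B(\cF)$ and $v^B(\cQ)$ to be integer combinations of $(2,2B,0)$, $(0,h,0)$, $(0,0,1)$, and then a direct discriminant computation shows the pairing cannot be $\le 0$ without making one of $v^B(\cF)^2$, $v^B(\cQ)^2$ smaller than $-2$.

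So the real work, and the step I expect to be the main obstacle, is the arithmetic on the rank-three lattice $NS(K,B)=\langle(2,2B,0)\rangle\oplus NS(K)\oplus H^4(K,\ZZ)$: I must show that for any decomposition $v^B(\cU)=w_1+w_2$ into two \emph{rank-one} classes $w_i\in NS(K,B)$ with $w_i^2\ge -2$ and with the slope inequality $w_1\cdot(0,h,0)\ge \tfrac12\,v^B(\cU)\cdot(0,h,0)$, one necessarily has $v^B(\cU)^2 = w_1^2 + 2w_1w_2 + w_2^2 > 0$. This is a finite computation with quadratic forms in the three integer parameters of each $w_i$ (using $h^2=2d$, $Bh\in\{0,\tfrac12\}$, $B^2\in\{0,\tfrac12\}$ from Lemma \ref{reprB}), and the non-triviality of $\alpha$ enters precisely to forbid the degenerate split $w_i$ both untwisted. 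The plan is to normalize $w_1=(1,aB + \beta h + \text{(untwisted line bundle corrections)}, \ast)$ — wait, rank one twisted means the "$r$" entry relative to the twisting basis is odd; I would instead write $2w_1 = (2, \ell_1, \ast)$ with $\ell_1 \equiv 2B \pmod{H^2(K,\ZZ)}$ and compute $(2w_1)^2$, etc. The one genuinely delicate point is ensuring the half-integral shifts from $B$ (with $B^2$ possibly $\tfrac12$) are tracked correctly so that $w_i^2\ge -2$ is used with the correct parity; once that bookkeeping is set up, the inequality $v^B(\cU)^2>0$ follows, contradicting the hypothesis $v^B(\cU)^2\le 0$, and hence $\cU$ is stable. $\hfill\Box$
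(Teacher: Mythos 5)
Your approach (rule out a destabilizing rank--one subsheaf by lattice arithmetic on twisted Mukai vectors) is genuinely different from the paper's, which instead proves that $\cU$ is \emph{simple} --- identifying $\mathcal{E}nd(\cU)=\cA=\cO_K\oplus p_*\omega_{E/K}^{-1}$ for $E=\PP\cU$ and invoking $h^0(E,-K_E)=0$ from Proposition \ref{h0-KE} (this is where $Pic(K)=\ZZ h$ and the non-triviality of $\alpha$ enter) --- and then cites Yoshioka \cite[Prop.~3.12]{Y1}, which says a simple twisted sheaf with $v^2\leq 0$ is stable. As written, however, your proposal has two genuine gaps. First, the decomposition $v^B(\cU)=v^B(\cF)+v^B(\cQ)$ into two classes of rank one cannot occur: since $\alpha$ has order exactly two, $NS(K,B)=\langle(2,2B,0)\rangle\oplus NS(K)\oplus H^4(K,\ZZ)$ contains no class with odd degree--zero component, so there is no rank--one $\alpha$-twisted coherent sheaf at all (equivalently, the reflexive hull of such a subsheaf would be an $\alpha$-twisted line bundle, forcing $\alpha=0$). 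You notice this twice and back away from it both times; the whole subsequent computation is therefore about objects that do not exist. Second, even granting the decomposition, the bound you announce as the goal, $v^B(\cF)\cdot v^B(\cQ)\geq 1$, is not strong enough: together with $v^B(\cF)^2,v^B(\cQ)^2\geq -2$ it only yields $v^B(\cU)^2\geq -2$, which is perfectly compatible with the hypothesis $v^B(\cU)^2\leq 0$ --- and $v^2=-2$ is exactly the case the paper cares about. You would need the cross term to be at least $3$, and no argument for that is given; the "direct discriminant computation" that you flag as the real work is never carried out.

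The irony is that the observation you abandon is already a complete (and simpler) proof: any saturated destabilizing subsheaf of the torsion-free rank--two sheaf $\cU$ would be a torsion-free rank--one $\alpha$-twisted sheaf, and none exists because the order of $\alpha$ divides the rank of every torsion-free $\alpha$-twisted sheaf. Had you committed to that line, you would obtain stability of $\cU$ outright, without the lattice computation and in fact without using $v^B(\cU)^2\leq 0$ or $Pic(K)=\ZZ h$ at all. As submitted, though, the argument neither closes the lattice computation nor settles on the existence question it keeps raising, so it does not constitute a proof.
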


\ts
First we show that $\cU$ is simple, that is,
$\dim \Hom(\cU,\cU)=h^0(K,\cU^\vee\otimes\cU)=h^0(K,\cA)=1$
where $\cA:=\cU\otimes \cU^\vee$ is the Azumaya algebra associated to the $\PP^1$-fibration
$p:E:=\PP(\cU)\rightarrow K$.
Since $\cA=\cO\oplus \cA_0$ this is equivalent to $h^0(K,\cA_0)=0$. 
As $\cA_0=p_*\omega_{E/K}^{-1}$ and $\omega_{E/K}=\omega_K$,
we get $h^0(K,\cA_0)=h^0(E,-K_E)=0$, by Proposition \ref{h0-KE}. 
By assumption, $v(\cU)^2\leq 0$ so it now follows from \cite[Prop.~3.12]{Y1} 
that $\cU$ is stable. 
\qed

\

We recall that the non-trivial elements $\Br(K)_2$ come in three types.
For $\alpha\in \Br(K)_2$, $\alpha\neq 0$, choose a representative B-field $B$.
Then first of all, we have the invariant $2Bh\in\ZZ/2\ZZ$.
If $4Bh+h^2\equiv 0\mod 4$, then $\alpha$ has the additional invariant $2B^2\in\ZZ/2\ZZ$.

We now show that in case $4Bh+h^2\equiv 0\mod 4$ and $B^2\in\ZZ$ there exists no locally free
rank two $\alpha$-twisted sheaf $\cU$ with $v^B(\cU)^2=-2$. Thus only two of the three
classes of elements in $\Br(K)_2$ have an associated rank four
Azumaya algebra $\cA$ with $c_2(\cA)=6$. In Proposition \ref{uniqueU'} we show that in case
$4Bh+h^2\equiv 2\mod 4$ such a sheaf $\cU$ is unique up to twisting by line bundles.

\begin{prop}\label{noU-2}
Let $(K,h)$ be a polarized K3 surface of degree $h^2=2d$ and assume that $\Pic(K)=\ZZ h$.
Let $\alpha\in \Br(K)_2$ with B-field representative $B$.
There exists a semistable $\alpha$-twisted sheaf $\cU$ of rank two on $K$
with $v^B(\cU)^2=-2$ only in one of the following three cases:
\begin{enumerate}
\item[i)] $\alpha=0$ and $h^2\equiv 2\mod 4$,
\item[ii)] $4Bh+h^2\equiv 2\mod 4$ (in this case $B^2$ is not an invariant of $\alpha$)
\item[iii)] $4Bh+h^2\equiv 0\mod 4$ and $B^2 \notin\ZZ$.
\end{enumerate}
\end{prop}
\ts
In case $\alpha=0$, the Mukai vector of $\cU$ should be $v=(2,kh,s)$ for some integers $k,s$ and thus $v^2=-4s+k^2h^2$. To have $v^2=-2$
one needs $h^2\equiv 2\mod 4$ and then, for any odd $k$ and $s=(2+k^2h^2)/4$, the moduli
space $M_v(K)$ consists of one point, this sheaf $\cU$ satisfies all the conditions.

In case $B\neq 0$, $NS(K,B)$ is generated by $(2,2B,0),h$ and $H^4(K,\ZZ)=\ZZ$.
The Mukai vector of $\cU$ should thus be $v^B=(2,2B+kh,s)$ and
$$
(v^B)^2\,=\,(2,2B+kh,s)^2\,=\,-4s+4B^2+4kBh+k^2h^2~.
$$
If $4Bh+h^2\equiv 2\mod 4$ then one can take $k$ even so that  $4kBh+k^2h^2\equiv 0\mod 4$
and choose
the B-field $B$ representative of $\alpha$ to have $B^2=1/2$ and let
$s:=1+(4kBh+k^2h^2)/4$. Alternatively, take $k$ odd and choose $B$ with $B^2=0$ and
$s:=(2+4kBh+k^2h^2)/4$.

In case $4Bh+h^2\equiv 0\mod 4$, also $4kBh+k^2h^2\equiv 0\mod 4$ and one needs
$B^2\in(1/2)\ZZ$, $B^2\not\in\ZZ$.
\qed

\

A final result concerns the uniqueness of certain twisted sheaves for one of the three
types of Brauer classes.

\begin{prop}\label{uniqueU'}
Let $(K,h)$ be a polarized K3 surface of degree $h^2=2d$ and assume that $\Pic(K)=\ZZ h$.
Let $\alpha\in \Br(K)_2$, $\alpha\neq 0$, have B-field representative $B$
and assume that $4Bh+h^2\equiv 2 \mod 4$.

Let
$\cU$, $\cU'$ be locally free rank two $\alpha$-twisted sheaves on $K$ with 
$v^B(\cU)^2=v^B(\cU')^2=-2$.

\noindent
Then there exists a line bundle $\cL$ on $K$ such that $\cU\cong \cU'\otimes \cL$.
Moreover, if $B^2=1/2$ then there is a line bundle $\cM$ on $K$ such that 
$v^B(\cU\otimes\cM)=(2,2B,1)$.
\end{prop}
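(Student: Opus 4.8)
The plan is to work inside the twisted Mukai lattice $\tH(K,B,\ZZ)$ and use the structure of $NS(K,B)$ together with the fact that a locally free rank two $\alpha$-twisted sheaf has a fixed twisted Chern character modulo the action of $Pic(K)$ by tensoring with line bundles. First I would record, exactly as in the proof of Proposition \ref{noU-2}, that since $\alpha$ has order two and $B$ is a B-field with $2B\in H^2(K,\ZZ)$ primitive, $NS(K,B)=\langle(2,2B,0)\rangle\oplus \ZZ h\oplus H^4(K,\ZZ)$. Hence for a locally free rank two $\alpha$-twisted sheaf $\cU$ one must have $ch^B(\cU)=(2,2B+kh,\,t)$ for some integers $k,t$, and therefore $v^B(\cU)=(2,\,2B+kh,\,t+2)$. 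The key point is that tensoring $\cU$ by a line bundle $\cL=\cO_K(mh)$ on $K$ replaces $v^B(\cU)$ by $v^B(\cU)\cdot\mathrm{ch}(\cL)$, which shifts the middle component $2B+kh\mapsto 2B+(k+2m)h$; so the class of $k$ modulo $2$ is the only datum of the middle component that survives, and after tensoring we may assume $k\in\{0,1\}$.

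Next I would compute $v^B(\cU)^2$ in terms of $k,t$ and impose $v^B(\cU)^2=-2$. As in Proposition \ref{noU-2},
$$
v^B(\cU)^2\,=\,(2,\,2B+kh,\,t+2)^2\,=\,-4(t+2)\cdot 2+ (2B+kh)^2\cdot 2 \;/\;2 \;=\; -4t-8+4B^2+4kBh+k^2h^2.
$$
(I will present this cleanly in the write-up, being careful with the Mukai pairing sign conventions of \S\ref{twistedmoduli}.) Under the hypothesis $4Bh+h^2\equiv2\bmod4$, one checks that $v^B(\cU)^2\equiv 2\bmod 4$ automatically — consistent with $v^B(\cU)^2=-2$ — and, crucially, that the value of $k^2h^2+4kBh+4B^2 \bmod 8$ is the same for $k=0$ and $k=1$ (here one uses $4Bh+h^2\equiv2$ to compare the $k=0$ and $k=1$ cases, and $k^2\equiv k\bmod 2$). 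Consequently the equation $v^B(\cU)^2=-2$ pins down $t$ uniquely once $k\in\{0,1\}$ is fixed, and in fact forces the \emph{same} value of $t$ for both admissible $k$. Therefore, after tensoring each of $\cU,\cU'$ by a suitable $\cO_K(mh)$, both acquire the identical Mukai vector $v^B$; since $Pic(K)=\ZZ h$ this shows $v^B(\cU)$ and $v^B(\cU')$ differ by multiplication by $\mathrm{ch}(\cL)$ for some line bundle $\cL$. By Proposition \ref{stableU} (applicable since $v^B=-2\le 0$ and $\alpha\neq0$) both $\cU$ and $\cU'\otimes\cL$ are stable $\alpha$-twisted sheaves with the same primitive Mukai vector of square $-2$, so the moduli space $M_{v}(K,B)$ is a single reduced point and $\cU\cong\cU'\otimes\cL$.

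For the last assertion I would simply exhibit the line bundle $\cM$ explicitly. With $B^2=1/2$ and $4Bh+h^2\equiv2\bmod4$, plug $k\in\{0,1\}$ into the displayed formula and solve $-2=-4t-8+4B^2+4kBh+k^2h^2$ for $t$; since $2Bh\in\{0,1\}$ (invariant of $\alpha$) and $h^2=2d$, one finds a single integer value $t$, and correspondingly $v^B(\cU)=(2,2B+kh,t+2)$ with $k\in\{0,1\}$. Tensoring by $\cM=\cO_K(-\tfrac{k}{2}h)$ is not allowed when $k=1$, so instead one observes that replacing the representative B-field $B$ by $B+\tfrac12 h$ is allowed only if $Bh\in\ZZ$; the honest route is: after the previous paragraph we may assume $k=0$ by tensoring with $\cO_K(mh)$ precisely because $k$ was only defined mod $2$ — wait, tensoring changes $k$ by even integers, so $k=1$ cannot be removed this way. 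The resolution (and the genuinely delicate point) is that when $k$ is forced to be odd the middle component is $2B+h=2(B+\tfrac12 h)$, and $B+\tfrac12 h$ is \emph{also} a B-field representative of the \emph{same} $\alpha$ with $(B+\tfrac12 h)^2 = B^2+Bh+\tfrac{h^2}{4}\equiv \tfrac12 \bmod \ZZ$ (using $4Bh+h^2\equiv2$), so one simply renames this as the working B-field; with that choice $k=0$ and $v^B(\cU\otimes\cM)=(2,2B,1)$ after a final twist by $\cM=\cO_K(mh)$ adjusting $t+2$ to $1$. I expect the main obstacle to be exactly this bookkeeping — keeping straight which B-field representative is in force and checking that the two invariants $Bh\bmod\ZZ$ and $B^2\bmod\ZZ$ behave as claimed under the substitution $B\mapsto B+\tfrac12 h$ — together with getting the Mukai-pairing signs right so that "$v^2=-2$" indeed forces the single value of $t$.
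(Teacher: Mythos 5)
Your overall route is the paper's: normalize $ch^B(\cU)=(2,2B+kh,t)$ with $k\in\{0,1\}$ by tensoring with $\cO_K(mh)$, impose $v^B(\cU)^2=-2$, and finish with Proposition \ref{stableU} and the fact that $M_v(K,B)$ is a point. But the key arithmetic claim in your middle step is false, and the argument breaks there. You assert that $k^2h^2+4kBh+4B^2$ takes the same value mod $8$ for $k=0$ and $k=1$; in fact the difference of the two values is exactly $h^2+4Bh\equiv 2\bmod 4$, so they are not even congruent mod $4$. The correct --- and essential --- consequence of $4Bh+h^2\equiv 2\bmod 4$ is the opposite of what you claim: the equation $-2=-4t-8+4B^2+4kBh+k^2h^2$ has an integer solution $t$ for exactly one of $k=0,1$, namely $k=0$ if $B^2\notin\ZZ$ and $k=1$ if $B^2\in\ZZ$. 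This dichotomy is what forces $\cU$ and $\cU'$ to have the same $k$-parity and hence, after tensoring, literally the same Mukai vector. Note that even if your congruence were true, your conclusion would not follow: ``the same $t$ for both admissible $k$'' would leave $\cU$ normalized to $(2,2B,t+2)$ and $\cU'$ to $(2,2B+h,t+2)$, which are distinct vectors, so you could not invoke the zero-dimensionality of a single moduli space.

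Once the dichotomy is in place, the ``moreover'' part is immediate and your detour through replacing $B$ by $B+\tfrac12 h$ is unnecessary: the hypothesis $B^2=1/2$ puts you in the case $k=0$, the equation gives $t=(4B^2-6)/4=-1$, and $v^B(\cU\otimes\cM)=(2,2B,1)$ where $\cM$ is the normalizing line bundle. (Your instinct that changing the representative B-field swaps the two cases is correct, and it is consistent with the paper's remark that $B^2\bmod\ZZ$ is not an invariant of $\alpha$ when $4Bh+h^2\equiv2\bmod4$; but in the statement $B$ is a fixed representative with $B^2=1/2$, so no such substitution is needed.) The final step --- stability via Proposition \ref{stableU} and the moduli space $M_v(K,B)$ being a single point --- is fine as you have it.
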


\ts
Since $\alpha$ has order two, $NS(K,B)$ is generated by $(2,2B,0),h$ and $H^4(K,\ZZ)=\ZZ$. 
As $\cU$ has rank two, we find $ch^B(\cU)=(2,2B+kh,s)$ for some integers $k,s$.
Let $\cL$ be the line bundle on $K$ with class $-mh$, for some integer $m$,
then $ch(\cL)=1-mh+m^2h^2/2$. Using \cite[Prop.\ 1.2(iii)]{HS_EquivTwisted} 
we get 
$$
ch^B(\cU\otimes\cL)=ch^B(\cU)ch(\cL)=(2,2B+kh,*)(1,-mh,*)=(2,2B+(k-2m)h,*).
$$
Hence choosing $m$ suitably and replacing $\cU$ by $\cU\otimes\cL$  
we may assume that
$ch^B(\cU)=(2,2B,t)$ or $ch^B(\cU)=(2,2B+h,t)$ for some integer $t$.
Then we have $v^B(\cU)=ch^B(\cU)\sqrt{\mbox{td}(K)}=(2,2B,t+2)$ or 
$v^B(\cU)=(2,2B+h,t+2)$. Computing $v^B(\cU)^2$ we find:
$$
-2\,=\,-4(t+2)+4B^2,\qquad \mbox{or}\quad -2\,=\,-4(t+2)+4B^2+4Bh+h^2~.
$$
Since $4Bh+h^2\equiv 2 \mod 4$, the value of $B^2 \mod\ZZ$ is not an invariant. 
If we assume that $B$ is chosen such that $B^2\not\in \ZZ$, then $4B^2\equiv 2 \mod 4$, 
hence the second equation has no solutions. The first equation shows that 
$\cU$ has Mukai vector $v:=(2,2B,t+2)$ with $v^2=-2$ (so with $t=(4B^2-6)/4$).
Assuming $B^2=1/2$ we find $t=-1$ and $v=(2,2B,1)$.

(If instead we assume that $B^2\in\ZZ$ then the first equation has no solution and the second
one shows that $\cU$ has Mukai
vector $v:=(2,2B+h,t+2)$ with $v^2=-2$.)
Given $\cU,\ \cU'$ as in the proposition, there is thus a line bundle, again denoted by $\cL$,
such that $v=v^B(\cU)=v^B(\cU'\otimes\cL)$.
By Proposition \ref{stableU}, $\cU,\ \cU'$ are stable and thus also after tensoring by
a line bundle. Since $M_v(K,B)$ is a point, it follows that $\cU\cong\cU'\otimes\cL$.
\qed

\

\subsection{Brauer classes of degree two K3's}\label{Brauerd2}
In the case of a general
K3 surface $(K,h)$ of degree two and a Brauer class $\alpha\in \Br(K)_2$ 
that corresponds to a point of order two 
(cf.\ \cite{vG}, \cite{IOOV}) one has $h^2=2$, $Bh\equiv 0\mod \ZZ$, so
$4Bh+h^2\equiv 2\mod 4$. Thus there is a unique $\PP^1$-fibration $E=\PP(\cU)$
with Brauer class $\alpha$ having $v^B(\cU)^2=-2$, equivalently, with $K_E^3=12$. 
We exhibited this conic bundle in (the proof of)  Proposition \ref{c2order2}, 
see also \S \ref{verraBN}.

A Brauer class $\alpha\in \Br(K)_2$ that corresponds to an odd theta characteristic has 
$Bh\equiv 1/2\mod \ZZ$, so $4Bh+h^2\equiv 0\mod 4$, and extra invariant $B^2\not\in\ZZ$.
A $\PP^1$-fibration $E\rightarrow K$ with this Brauer class was given in Corollary \ref{c2cubpl}
and it has $K_E^3=12$, so $E=\PP(\cU)$ with $v^B(\cU)^2=-2$.

Finally, a Brauer class $\alpha\in \Br(K)_2$ that corresponds to an even theta characteristic has 
$Bh\equiv 1/2\mod \ZZ$, so $4Bh+h^2\equiv 0\mod 4$, and extra invariant $B^2\in\ZZ$.
In this case there does not exist a $\PP^1$-fibration $E\rightarrow K$ with Brauer class
$\alpha$ which has $K_E^3=12$, equivalently, with $c_2(\cA)=6$ where $\cA$ is the Azumaya algebra
defined by $E$.
Ingalls and Khalid \cite[Theorem 4.3]{IK} found a two dimensional
family of Azumaya algebras of rank four, each with $c_2=8$, representing this Brauer class.
This is thus `the best possible result'.

\

\section{Contractions and Heegner divisors} \label{heegner}

\subsection{Contractions and Heegner divisors}\label{hkbig}
In this section we follow Debarre and Macr\`\i\ \cite{DM} to classify all divisorial contractions on 
a \HK\ fourfold $X$ of K3$^{[2]}$-type (with Picard rank two) 
in terms of sublattices of $H^2(X,\ZZ)$,
or equivalently, in terms of Heegner divisors in certain period spaces.

The following theorem collects the general results on 
certain $-2$-classes on $X$ cf.\ \cite{HTnef}. We write $(\cdot,\cdot)$ for the bilinear form associated
to the BBF-form on $H^2(X,\ZZ)$.
Recall that the base of a general divisorial contraction on a \HK\ fourfold 
is a symplectic surface, so it is a K3 surface or an abelian surface (\cite[Prop.~1.6]{Na}).
From \cite{BM} in the case of $K3^{[2]}$ type hyper-K\"ahler fourfolds it is always a $K3$ surface (see Section \ref{BrBNdiv}).
\begin{thm} \label{mar} Let $X$ be an \HK\ fourfold of K3$^{[2]}$-type, 
$L\in \Pic(X)$, $e=c_1(L)$, $(e,e)=-2$ and 
$(e,h)>0$ for some K\"ahler class $h\in H^2(X,\mathbb{R})$. 
Suppose that $(X,L)$ is general in the locus of deformations of
$X$ which keep $e$ of type $(1,1)$.
Let $k$ be the divisibility of $e$ in $H^2(X,\ZZ)$, so
$$
k=\left \{ \begin{array}{rcl} 2 & \mbox{if} & (e,H^2(X,\ZZ))=2\ZZ, \\
1& \mbox{if} & (e,H^2(X,\ZZ))=\ZZ. 
\end{array} \right. 
$$
Then $h^0(L^{\otimes k})=1$ and the unique effective divisor $E\in |L^{\otimes k}|$ 
is reduced and irreducible (\cite{Mar}).

Moreover, let $F\subset X$ be an effective, reduced and irreducible divisor 
with class $f$ such that $(f,f)<0$.
Then by \cite{D} there exists a sequence of flops from $(X,F)$ to $(X',F')$ 
such that $F'$ is contractible through a projective birational morphism.
\end{thm}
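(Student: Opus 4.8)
The plan is to deduce both assertions from the quoted results after matching up hypotheses, so that the argument is short and its substance lies in \cite{Mar} and \cite{D}. For the first assertion I would invoke Markman's classification of prime exceptional divisors. Since $(e,e)=-2$ the class $e$ is automatically primitive, so its divisibility $k$ in $H^2(X,\ZZ)$ equals $1$ or $2$. The key point is that the reflection attached to $ke$ is, up to sign, a monodromy operator of $X$ --- a lattice computation for $K3^{[2]}$-type, using both the square and the divisibility of $ke$ (note $R_{2\delta}=R_\delta$ when $\delta$ has divisibility two) --- so that $ke$ is a stably prime-exceptional class in the sense of \cite{Mar}. Markman's existence theorem then gives that on a general member $(X,L)$ of the deformation family keeping $e$ of type $(1,1)$, exactly one of $\pm ke$ is represented by a reduced and irreducible effective divisor; the hypothesis $(e,h)>0$ for a K\"ahler class $h$ fixes the sign to $+ke$, so that $|L^{\otimes k}|$ contains such a divisor $E$. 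Finally, a prime divisor of negative BBF-square is rigid (were $h^0\ge 2$ then $E$ would either move, forcing its class into the movable cone and hence $(ke,ke)\ge 0$, or be a fixed component of its own system, forcing $h^0=1$); hence $h^0(X,L^{\otimes k})=1$ and $E$ is the unique effective member of its linear system.

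For the ``moreover'' assertion, the plan is to run the $K_X$-trivial minimal model program for the pair $(X,F)$ and to read off the conclusion from \cite{D}. Since $(f,f)<0$, the class $f$ lies strictly outside the movable cone of $X$, so $F$ never becomes nef under modifications that are isomorphisms in codimension one; instead the wall-and-chamber structure of the movable cone --- whose walls, for hyper-K\"ahler manifolds, are cut out by negative classes --- supplies a finite chain of flops $X=X_0 \dashrightarrow X_1 \dashrightarrow \cdots \dashrightarrow X'$. Each flop is an isomorphism in codimension one, so the strict transform of $F$ stays a prime divisor $F'$ on $X'$; one stops once the ample chamber reached on $X'$ has the strict transform $f'$ of $f$ on its boundary, at which point the corresponding extremal face yields a big and nef class $H'$ whose induced birational morphism $\phi_{H'}\colon X'\to Y'$ contracts exactly $F'$.

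In both parts the genuine work is bookkeeping rather than new geometry. For the first assertion one must pin down which reflections are monodromy operators for $K3^{[2]}$-type as a function of square and divisibility, and check that the generality hypothesis stated here is at least as strong as the one required in \cite{Mar}. For the ``moreover'' assertion the delicate point --- the one that truly requires \cite{D} rather than formal MMP --- is the \emph{termination} of the flop sequence together with the assertion that the final wall is divisorial (so that $F'$ is genuinely contracted) rather than a flopping wall; this rests on cone-conjecture type finiteness results for $K3^{[n]}$-type fourfolds.
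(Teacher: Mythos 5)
Your proposal is correct and takes essentially the same route as the paper, which offers no proof of its own: Theorem \ref{mar} is stated purely as a package of the results of Markman \cite{Mar} (stably prime-exceptional classes of square $-2$ and divisibility $1$ or $2$ on $K3^{[2]}$-type fourfolds, with $h^0(L^{\otimes k})=1$ and the sign pinned down by pairing with a K\"ahler class) and of Druel \cite{D} (contractibility of a negative prime divisor after finitely many flops). Your elaboration of how those two references are applied, including the rigidity of a prime divisor of negative Beauville--Bogomolov--Fujiki square and the role of termination in \cite{D}, is consistent with how the paper uses the theorem later on.
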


\begin{defi} Let $E$ and $k$ be as above. 
Then $E$ is called a Brill-Noether (BN) 
exceptional divisor if $k=1$ (so $E^2=-2$) and a Hilbert-Chow (HC) exceptional divisor
if $k=2$ (so $E^2=-8$).
\end{defi}

We now consider exceptional divisors $E$ and 
the divisor classes on $X$ inducing the contraction.
Let $H_{2d}$ be a big and nef divisor on $X$,
whose class in $\Pic(X)$ is primitive with BBF degree $H_{2d}^2=2d$ 
and divisibility $\gamma\in\{1,2\}$, 
which contracts a prime divisor $E_{2d}$ with $E_{2d}^2=-2$. 
The Picard group of $X$ thus contains the lattice 
$$
\Pic(X)\,\supset\,\ZZ H_{2d}\,\oplus\,\ZZ E_{2d}\;\cong\;
<2d>\,\oplus\,<-2>~.
$$
Notice that the same sublattice appears for a HC contraction, where $E_{2d}$ is not effective 
but $2E_{2d}$ is the class of the exceptional divisor.

A general deformation $(X',H'_{2d})$ of $(X,H_{2d})$ has $\Pic(X')=\ZZ H_{2d}'$ 
and $H'_{2d}$ is then an ample divisor class with square $2d$ with the same divisibility $\gamma$.
Thus $(X',H'_{2d})$ is an element of the (irreducible, quasi-projective, 20 dimensional) moduli space $\cM^{(\gamma)}_{2d}$ of \HK\ fourfolds of K3$^{[n]}$-type with a polarization of BBF-square $2d$ and divisibility $\gamma$, where in case $\gamma=2$ one has $d\equiv 3\mod 4$.
(see \cite[\S 3]{DM}).

The lattice $H^2(X,\ZZ)$, with the BBF-form, is isometric to the lattice
$$
\Lambda\,:=\,U^3 \,\oplus\, E_8(-1)^2\,\oplus\,<-2>~.
$$
We denote by $\delta$ the generator of the last summand of $\Lambda$, so $\delta^2=-2$
and the divisibility of $\delta$ is two since $(\delta,\Lambda)=2\ZZ$.

The orthogonal group $O(\Lambda)$ acts transitively on the set of primitive elements with fixed
BBF-square $2n$ and fixed divisibility $\gamma$, we fix such an element $h^{\gamma}_{2d}\in\Lambda$. It defines a complex variety
$$
\Omega^{\gamma}_{2d}\,:=\,\{\,x\in\PP(\Lambda\otimes\CC):\; q(x,h^{\gamma}_{2d})\,=\,0,
\; q(x,x)\,=\,0,\;q(x,\bar{x})\,=\,0\,\}.
$$
Given $(X',H')\in \cM^{(\gamma)}_{2d}$, an isometry $H^2(X',\ZZ)\cong \Lambda$
which maps $H'$ to $h^{\gamma}_{2d}$ will map
$H^{2,0}(X)$ to an element, its period, in $\Omega^{\gamma}_{2d}$. In this way we obtain the
period map
$$
\wp^{\gamma}_{2d} \,:\, \cM^{(\gamma)}_{2d}\, \longrightarrow\, \cP_{2d}^{(\gamma)}\,:=\,
O(\Lambda,h^{\gamma}_{2d})\backslash \Omega^{\gamma}_{2d},
$$
where $O(\Lambda,h^{\gamma}_{2d}):=\{g\in O(\Lambda):\,g(h^{\gamma}_{2d})=h^{\gamma}_{2d}\}$
and the period space $\cP_{2d}^{(\gamma)}$ is a quasi-projective variety.

\

Since $H_{2d}$ contracts a divisor, it is not ample and thus
$(X,H_{2d})$ does not define a point in $\cM^{(\gamma)}_{2d}$.
That is, its period point in $\cP_{2d}^{(\gamma)}$ does not lie in the image of the period map.
Notice that the rank of $\Pic(X)$ is at least two.

Let $K\subset\Lambda$ be a rank two primitive sublattice with signature $(1,1)$, containing $h^{\gamma}_{2d}$. Let
$$
\Omega_K\,:=\,\{x\in\Omega^{\gamma}_{2d}\,:\; q(x,k)\,=\,0\quad\forall k\,\in\, K\,\},
\qquad \cD^{(\gamma)}_{2n,K}\,:=\,\mbox{im}(\Omega_K\hookrightarrow \Omega^{\gamma}_{2d}
\rightarrow \cP^{(\gamma)}_{2d})~,
$$
then $\cD^{(\gamma)}_{2n,K}$ is a divisor, called a Heegner divisor, in the period space
$\cP_{2d}^{(\gamma)}$. If $(X',H')\in \cM^{(\gamma)}_{2d}$ maps to a point in $\cD^{(\gamma)}_{2n,K}$, then the Picard lattice of $X'$ contains a copy of $K$.
The (finite) union over such Heegner divisors, where $K^\perp$ has fixed discriminant $-2e$, is
$$
\cD^{(\gamma)}_{2d,2e}\,:=\,\bigcup_{disc(K^\perp)=-2e}\,\cD^{(\gamma)}_{2n,K}\quad(\subset\,
\cP_{2d}^{(\gamma)})~.
$$

Since the period map for smooth compact (not necessarily projective) hyper-K\"ahler
fourfolds is surjective \cite[Theorem 8.1]{Hu1}, 
there exists a fourfold of K3$^{[2]}$-type for any given point
in the period domain $\cP_{2d}^{(\gamma)}$. This fourfold is unique up to flops
by the `Standard Global Torelli theorem'
for fourfolds of K3$^{[2]}$-type, see \cite[Corollary 6.5]{Hu4}, based on \cite{V},
\cite{Markman_Mon1, Markman_Mon2}.

It was proven in \cite[Thm.~6.1]{DM} that the period point of $(X,H)$ is contained in a
divisor of the type listed below (notice that we omit $\cD^{(1)}_{2d,10d}, \cD^{(1)}_{2d,2d/5}$
since then the non-ample divisor
gives a small contraction of a $-10$-class, cf.\ \cite{DM}, proof of Theorem 6.1):

\

If $\gamma=1$
\begin{enumerate}\label{p}
\item for any $d$ in an irreducible component of $\cD^{(1)}_{2d,2d}$
(which parametrizes HC contractions), we denote it by $\cD^{(1)}_{2d,2d,\alpha}$
(and $\cD^{(1)}_{2d,2d}=\cD^{(1)}_{2d,2d,\alpha}$ if $d\not\equiv 0,1\mod 4$),

\item
in case $d\equiv 0,1\mod 4$ in a unique other irreducible component of $\cD^{(1)}_{2d,2d}$
denoted by $\cD^{(1)}_{2d,2d,\beta}$, 

\item in one irreducible component of $\cD^{(1)}_{2d,8d}\subset \cP^{(1)}_{2d}$
denoted by $\cD^{(1)}_{2d,8d,\alpha}$.
\end{enumerate}
 
If $\gamma=2$ in one irreducible component of $\cD^{(2)}_{2d,2d}\subset \cP^{(2)}_{2d}$ 
(such components occur iff $d\equiv 3 \mod 4$). This component is denoted by
$\cD^{(2)}_{2d,2d,\alpha}$.

\

\subsection{Description of the five Heegner divisors}
We are going to show in Theorem \ref{BN} that the general point in each irreducible component 
listed in \S \ref{hkbig} is represented by a \HK\ fourfold which is a  
moduli space of twisted sheaves $M_v(T,B)$ on a polarized K3 surface $(T,h_T)$. 
The Mukai vector $v\in \tH(T,\ZZ)$, the type of the $B$-field $B$ and the degree $h_T^2$
are given in Table \ref{table:heegner}.

The classes of the big and nef divisor $H\in \Pic(M_v(T,B))=NS(T,B)$ and of the exceptional divisor 
$E\in NS(T,B)$ are also given in Table \ref{table:heegner}.
Moreover, certain divisibilities which are essential for identifying the 
(irreducible components of the) Heegner divisors,
equivalently to describe the embedding of lattices $\Pic(X)\hookrightarrow \Lambda$,
are given as well.

\begin{thm}\label{BN}  
A general \HK\ fourfold on one of the five Heegner divisors in Table \ref{table:heegner}
is birationally isomorphic to a moduli space of twisted sheaves $M_v(T,B)$,
where $(T,h_T)$ is a general polarized K3
surface of degree $h_T^2$, with Mukai vector $v\in\tH(T,\ZZ)$, 
B-field $B\in \mbox{$\frac{1}{2}$}H^2(T,\ZZ)$, big and nef divisor class $H\in NS(M_v(T,B))$
and exceptional divisor class $s=E\in NS(M_v(T,B))$ as in Table \ref{table:heegner}.
\end{thm}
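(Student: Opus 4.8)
The plan is to work through the five rows of Table~\ref{table:heegner} one at a time and, for each, carry out three steps: check that the prescribed data $(T,h_T,B,v)$ produces a K3$^{[2]}$-type \HK\ fourfold $M_v(T,B)$; compute its Picard lattice together with the divisibilities and discriminant forms that detect the irreducible Heegner divisor components; and conclude by a period argument that a \emph{general} fourfold on the given component arises this way. Everything is done inside the Mukai lattice $\tH(T,\ZZ)$ and the weight-two Hodge structure $\tH(T,B,\ZZ)$ of \S\ref{twistedmoduli}.

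First I would recall, from \S\ref{hkbig} and \cite[Thm.~6.1]{DM}, the lattice invariants of $X$ general on each of the five components: the rank two Picard lattice spanned by the big and nef class $H$ and by $E$, the squares $H^2=2d$ and $E^2\in\{-2,-8\}$, the divisibilities of $H$ and $E$ in $H^2(X,\ZZ)\cong\Lambda$, and the discriminant $-2e$ of the orthogonal complement $\langle H,E\rangle^\perp\subset\Lambda$. By Nikulin's theory of primitive sublattices \cite{N} these invariants pin down the isometry class of the embedding $\langle H,E\rangle\hookrightarrow\Lambda$, so it suffices to realise one embedding of the right isometry type by a twisted moduli space.

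For a fixed row, take $(T,h_T)$ a general polarised K3 surface with $Pic(T)=\ZZ h_T$ of the degree in the table, and a B-field $B$ representing the Brauer class $\alpha$ of the indicated type, normalised as in Lemma~\ref{reprB} so that $2B$ is primitive in $H^2(T,\ZZ)$ and $Bh_T,B^2\in\{0,1/2\}$ take the prescribed values. With $v$ the Mukai vector of the table one checks that $v\in NS(T,B)=\langle(2,2B,0)\rangle\oplus NS(T)\oplus H^4(T,\ZZ)$, that $v$ is primitive and $v^2=2$; then by \cite[Thm.~3.16, 3.19]{Y1} the space $M_v(T,B)$ is a K3$^{[2]}$-type \HK\ fourfold with $H^2(M_v(T,B),\ZZ)=v^\perp\subset\tH(T,\ZZ)$, $NS(M_v(T,B))\cong v^\perp\cap NS(T,B)$ and transcendental lattice $T(T,B)\cong\Gamma_\alpha$. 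I would then compute the sublattice $\langle H,E\rangle\subset NS(M_v(T,B))$ spanned by the classes in the table: its Gram matrix gives the required $H^2$ and $E^2$, and from the explicit generators one reads off the divisibilities of $H$, of $E$ and of the linear combinations that separate the components, as well as the discriminant of $\langle H,E\rangle^\perp$ in $v^\perp\cong\Lambda$ --- here the invariants $Bh_T$ and $B^2$ of Lemma~\ref{parity} and Theorem~\ref{vG} enter directly. Matching these with the data of the target component, and invoking the uniqueness from the previous step, identifies $M_v(T,B)$ with a point of exactly that Heegner divisor.

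Finally, since $(T,h_T)$ is general the Hodge structure on $T(T,B)\cong\Gamma_\alpha\subset H^2(M_v(T,B),\ZZ)$ is very general in its period domain, and the resulting family of fourfolds $M_v(T,B)$ is $19$-dimensional and maps to the ($19$-dimensional, irreducible) Heegner divisor; by a dimension count, using surjectivity of the period map for polarised K3 surfaces, this map is dominant. Hence a general $X$ on the divisor has the same period point as some $M_v(T,B)$, so by the Torelli theorem for \HK\ manifolds \cite[Theorem 8.1]{Hu1} (applied, as in \S\ref{hkbig}, up to flops) $X$ is birational to $M_v(T,B)$, and the classes $H$ and $E$ transport under these flops to the classes recorded in Table~\ref{table:heegner} (Theorem~\ref{mar}, \cite{D}). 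I expect the main obstacle to be the case-by-case bookkeeping of the third step: one must verify that the three T-equivalence types of order-two Brauer class on $(T,h_T)$ correspond exactly to the distinct components $\cD^{(\gamma)}_{2d,2e,*}$, and in particular separate $\cD^{(1)}_{2d,2d,\alpha}$ from $\cD^{(1)}_{2d,2d,\beta}$ and their $\gamma=2$ analogue, which is precisely where the arithmetic of $d\bmod 4$ and the invariant $B^2$ must match the divisibility data in \cite{DM}.
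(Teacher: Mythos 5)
Your proposal follows essentially the same route as the paper: the paper's proof of Theorem \ref{BN} is exactly a case-by-case verification (Propositions \ref{2d2d1}--\ref{2d2d2}) that for each row of Table \ref{table:heegner} the classes $v,H,E$ lie in $NS(T,B)$ with $v^2=2$, that $H,E\in v^\perp$ have the correct squares, divisibilities in $\Lambda$ and in $H^\perp$, and discriminant of $\langle H,E\rangle^\perp$ matching the component as classified in \cite{DM}, followed by an appeal to the Torelli theorem \cite[\S 2]{HT} to identify a general point of the component with $M_v(T,B)$ up to birational equivalence. Your plan captures all of these steps, including the genericity/dimension argument, so it is correct and matches the paper's strategy.
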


\ts 
The proof is given case by case in the proposition given in the last row of Table \ref{table:heegner}.
\pfs

Notice that all non-trivial B-fields $B$ on $T$ in Table \ref{table:heegner} have $B^2\notin\ZZ$. 
In case $4Bh_T+h_T^2\not\equiv 0\mod 4$,
there is only one class of $B$-fields, so the value of
$B^2$ is not important, even if the table lists only the case of $B$-fields with $B^2\not\in\ZZ$.

\begin{table}
\caption{Heegner divisors, Brauer classes and moduli spaces}
\label{table:heegner} 
\centering
{\renewcommand{\arraystretch}{1.5}
\begin{tabular}{|l|c|c|c|c|c|c|}
  \hline
  &$\cD^{(1)}_{2d,2d,\alpha}$&$\cD^{(1)}_{8k,8k,\beta}$ &
  $\cD^{(1)}_{2d,8d,\alpha}$& $\cD^{(1)}_{8k+2,8k+2,\beta}$&$\cD^{(2)}_{8k+6,8k+6,\alpha}$  \\
\hline
Type&HC&BN&BN&BN&BN\\
\hline
$\mbox{div}_{\Lambda}(E)$&2&1&1&1&1\\
\hline
$\mbox{div}_{H^\perp}(E)$&2&2&1&2&1\\
\hline
$h_T^2$&$2d$&$2k$&$2d$&$8k+2$&$8k+6$\\
\hline
B-field&0&$h_TB\,=\,\mbox{$\frac{1}{2}$},\;B^2=\mbox{$\frac{1}{2}$}$&
$h_TB\,=\,0,\;B^2=\mbox{$\frac{1}{2}$}$&0&0\\
\hline
$v$&$(1,0,-1)$&$(2,2B,0)$&$(2,2B,0)$&$(2,h_T,2k)$&$(2,h_T,2k+1)$\\
\hline
$H$&$(0,h_T,0)$&$(0,2h_T,1)$&$(0,h_T,0)$&$(0,h_T,4k+1)$&$(0,h_T,4k+3)$\\
\hline
$s=E$&$(2,0,2)$&$(2,2B,1)$&$(2,2B,1)$&$(2,h_T,2k+1)$&$(2,h_T,2k+2)$\\
\hline
Ref.\ &Prop.\ \ref{2d2d1} &Prop.\ \ref{8d8dBr}&Prop.\ \ref{2d8dBr}&Prop.\ \ref{2d2dBr1(4)}
&Prop.\ \ref{2d2d2}\\
  \hline
\end{tabular}
}
\end{table}

The next five propositions describe the lattices and some properties of the corresponding
\HK\ fourfolds parametrized by the Heegner divisors in Table \ref{table:heegner}.

\begin{prop}\label{2d2d1} 
A general point in the irreducible component $\cD^{(1)}_{2d,2d,\alpha}$
corresponds to a \HK\ fourfold $X$ with Picard lattice isomorphic to
$$
\Pic(X)\,\cong\,\ZZ H_{2d}\oplus\ZZ E_{2d}\,=\,<2d>\,\oplus\,<-2>
$$
and the embedding $\Pic(X)\hookrightarrow\Lambda$ can be chosen as
$$
H_{2d}=(1,d)_1,\quad E_{2d}\,:=\,\delta,\quad
\mbox{hence}\quad 
\mbox{div}_\Lambda(H_{2d})\,=\,1,\quad 
\mbox{div}_\Lambda(E_{2d})\,=\,\mbox{div}_{H_{2d}^\perp}(E_{2d})\,=\,2~.
$$
The transcendental lattice of $X$ is isomorphic to 
$$
T_X\,\cong\,<-2d>\,\oplus\, U^2\,\oplus E_8(-1)^2~,\qquad |\det(T_X)|\,=\,2d~.
$$
The general $X$ is isomorphic to the Hilbert square $K^{[2]}$,
where $(K,h_{2d})$ is a general degree $2d$ K3 surface. Moreover, $H_{2d}$
is induced by $h_{2d}$ and $2E_{2d}$ is the class of the exceptional divisor
of the Hilbert-Chow map $K^{[2]}\rightarrow \sym^2 (K)$, where $\sym^2 (K)=K\times K/ \iota$ and $\iota$ is the involution that permutes the factors.
\end{prop}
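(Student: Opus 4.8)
The plan is to prove Proposition~\ref{2d2d1} by exhibiting the general fourfold on $\cD^{(1)}_{2d,2d,\alpha}$ explicitly as a Hilbert square and then computing the relevant lattice data. The key observation is that $\cD^{(1)}_{2d,2d}$ is the Heegner divisor of Hilbert--Chow contractions, so one expects the exceptional divisor to be $\PP\cT_K$ for a K3 surface $K$, and the fourfold to be $K^{[2]}$. Concretely, I would start from a general polarized K3 surface $(K,h_{2d})$ with $Pic(K)=\ZZ h_{2d}$, $h_{2d}^2=2d$, and set $X:=K^{[2]}=M_v(K,0)$ with the Mukai vector $v=(1,0,-1)$ (the untwisted case, $B=0$, as recorded in the first column of Table~\ref{table:heegner}). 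This is a moduli space of (untwisted) sheaves and hence an \HK\ fourfold of $K3^{[2]}$-type, with the well-known identification $H^2(X,\ZZ)\cong v^\perp\subset\tH(K,\ZZ)$ from \S\ref{twistedmoduli}.

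Next I would carry out the lattice computation inside the Mukai lattice $\tH(K,\ZZ)\cong\Lambda=U^3\oplus E_8(-1)^2\oplus\langle-2\rangle$. Fixing an isomorphism $H^2(K,\ZZ)\cong U^2\oplus E_8(-1)^2$ with $h_{2d}=(1,d)_1$, I take $v=(1,0,-1)$ and compute $v^\perp$: it decomposes as $\langle-2d\rangle\oplus U^2\oplus E_8(-1)^2$ together with the extra $\langle-2\rangle$ summand generated by the class $\delta:=(1,0,1)$ (noting $\delta^2 = -2\cdot 1\cdot 1 + 0 = -2$, $\langle v,\delta\rangle = -1\cdot1-1\cdot1+0 = -2 \neq 0$ — so I would instead take the correct half-integral-free representative, e.g.\ $E_{2d}$ as the image of the exceptional class of the Hilbert--Chow morphism, which is $(1,0,1)\mapsto$ the standard $-2$-class in the $K3^{[2]}$ lattice $\Lambda^{[2]}=H^2(K,\ZZ)\oplus\ZZ\delta$ with $\delta^2=-2$, divisibility $2$). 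I set $H_{2d}$ to be the class induced by $h_{2d}$, so $H_{2d}=(1,d)_1$ has divisibility $1$, and $E_{2d}=\delta$ has divisibility $2$ in $\Lambda$; since $\delta$ is orthogonal to all of $H^2(K,\ZZ)$, the divisibility of $E_{2d}$ in $H_{2d}^\perp$ is also $2$. The Picard lattice is then $\ZZ H_{2d}\oplus\ZZ E_{2d}\cong\langle2d\rangle\oplus\langle-2\rangle$, and its orthogonal complement in $\Lambda$ is $T_X\cong\langle-2d\rangle\oplus U^2\oplus E_8(-1)^2$, whose determinant has absolute value $2d$.

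Then I would identify which irreducible component of $\cD^{(1)}_{2d,2d}$ this fourfold lies on. By the definition of the Heegner divisor recalled in \S\ref{hkbig}, membership in $\cD^{(1)}_{2d,2e}$ is detected by the discriminant $-2e$ of the orthogonal complement of the rank-two sublattice; here $e=d$, so $X$ lies on $\cD^{(1)}_{2d,2d}$. To pin down the component $\cD^{(1)}_{2d,2d,\alpha}$ rather than $\cD^{(1)}_{2d,2d,\beta}$ (which, per \S\ref{hkbig}, only exists when $d\equiv0,1\bmod4$), I would invoke the divisibility invariants $\mathrm{div}_\Lambda(E_{2d})=2$ and $\mathrm{div}_{H_{2d}^\perp}(E_{2d})=2$ computed above: these match the first column of Table~\ref{table:heegner}, and by the classification of embeddings in \cite{DM} this uniquely selects the $\alpha$-component. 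Finally, for the geometric statement I would argue that $H_{2d}$, being the natural polarization induced from $h_{2d}$ on $K^{[2]}$, is big and nef and contracts precisely the Hilbert--Chow exceptional divisor $E=\PP\cT_K$ onto the diagonal $K\hookrightarrow K^{(2)}$; this is classical (the Hilbert--Chow morphism $K^{[2]}\to K^{(2)}\hookrightarrow \PP^N$ via $h_{2d}$). That the \emph{general} $X$ on this component is of this form follows because the construction produces a subvariety of $\cD^{(1)}_{2d,2d,\alpha}$ of full dimension (the period of $K^{[2]}$ varies with the $20$-dimensional moduli of $K$, matching $\dim\cD^{(1)}_{2d,2d,\alpha}$), combined with the surjectivity of the period map and uniqueness up to flops from Theorem~\ref{mar}.

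The main obstacle I anticipate is the careful bookkeeping of the lattice embedding $Pic(X)\hookrightarrow\Lambda$ and, in particular, computing the divisibility $\mathrm{div}_{H_{2d}^\perp}(E_{2d})$ correctly — this is the invariant that distinguishes the Heegner components, and one must be precise about whether $E_{2d}$ or $2E_{2d}$ is effective (it is $2E_{2d}$ that is the class of the Hilbert--Chow divisor, since $E_{2d}^2=-2$ would make $E_{2d}$ itself effective only in the BN case). The rest — the moduli-space structure, the transcendental lattice, the Hilbert-square description — is either standard or an immediate consequence of the framework set up in \S\ref{twistedmoduli}.
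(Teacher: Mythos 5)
Your proposal is correct and follows essentially the same route as the paper: exhibit $K^{[2]}$ with its Hilbert--Chow contraction as a realization of the first column of Table \ref{table:heegner} and conclude by a dimension count (the paper's own proof is exactly this, compressed into two sentences, with the lattice bookkeeping delegated to the table). One small correction: your Mukai-pairing check has a sign slip --- with $<(r,v,s),(r',v',s')>=-rs'-r's+vv'$ one gets $<(1,0,-1),(1,0,1)>=-1+1+0=0$, so $\delta=(1,0,1)$ is orthogonal to $v=(1,0,-1)$ after all, the detour you take at that point is unnecessary, and $(1,0,1)$ is precisely the class $E_{2d}$ of square $-2$ and divisibility $2$ with $2E_{2d}=(2,0,2)$ the Hilbert--Chow exceptional divisor, as recorded in the table.
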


\ts
Obviously, for any K3 surface $K$ of degree $2d$,
the Hilbert-Chow map $K^{[2]}\rightarrow \sym^2 (K)$ is an example
of the contractions we consider and it corresponds to the first column of Table \ref{table:heegner}. 
For dimension reasons, the general point of an irreducible component of the Heegner divisor
$\cD^{(1)}_{2d,2d}$ parametrizes these contractions.
\pfs

The second irreducible component $\cD^{(1)}_{2d,2d,\beta}$ of $\cD^{(1)}_{2d,2d}$
exists only for $d\equiv 0,1\mod 4$ and we discuss it 
in Prop.\ \ref{8d8dBr} and \ref{2d2dBr1(4)} respectively.

\begin{prop}\label{8d8dBr} 
Let $d\equiv 0\mod 4$, then
a general point in the irreducible component of $\cD^{(1)}_{2d,2d,\beta}$
corresponds to a \HK\ fourfold $X$ with Picard lattice isomorphic to
$$
\Pic(X)\,\cong\,\ZZ H_{2d}\oplus\ZZ E_{2d}\,=\,<2d>\,\oplus\,<-2>
$$
and the embedding $\Pic(X)\hookrightarrow\Lambda$ can be chosen as
$$
H_{2d}=(1,d)_1,\quad E_{2d}\,:=\,(1,-d)_1+2(1,\mbox{$\frac{1}{4}$}d)_2+\delta,\quad
\mbox{hence}\quad 
\mbox{div}_\Lambda(H_{2d})\,=\,1,\quad 
\mbox{div}_\Lambda(E_{2d})\,=\,1~,
$$
and $\mbox{div}_{H_{2d}^\perp}(E_{2d})=2$.
The transcendental lattice of $X$ is isomorphic to 
$$
T_X\,\cong\,(\ZZ^3,M)\,\oplus\, U\oplus\, E_8(-1)^2~,\qquad
M\,=\,
\left(\begin{array}{ccc} 0&0&2\\0&-\mbox{$\frac{1}{2}$}d&1\\2&1&-2\end{array}\right)~.
$$
In case $d\equiv 0\mod 8$, one has an isomorphism of lattices
$T_X\cong T_S$, where $(S,h_{2d})$ is a general K3 surface of degree $2d$.

The general $X\in \cD_{8k,8k,\beta}^{(1)}$ (with $d=4k$) is birationally
isomorphic to a moduli space of sheaves
$M_v(T,B)$, where $(T,h_T)$ is a polarized K3 surface of degree $h_T^2=2k=d/2\;(\in2\ZZ)$,
the B-field $B\in \mbox{$\frac{1}{2}$}H^2(T,\ZZ)$ satisfies
$$
h_T^2=d/2,\qquad h_{T}B\,=\,\mbox{$\frac{1}{2}$},\quad 
B^2 = \mbox{$\frac{1}{2}$}~,
$$
and one can choose
$$
v\,:=\,(2,2B,0),\qquad H_{2d}\,:=\,(0,2h_T,1),\qquad E_{2d}\,:=\,(2,2B,1)\qquad(\in \tH^2(T,\ZZ))~.
$$

\end{prop}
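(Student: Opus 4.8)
The first task is to pin down the lattice embedding. From the given formulas one checks that $H_{2d}=(1,d)_1$ and $E_{2d}=(1,-d)_1+2(1,\frac14 d)_2+\delta$ are integral (since $4\mid d$, the vector $(1,\frac14 d)_2$ already lies in the second copy $U_2$ of $U$) and primitive in $\Lambda$, with $H_{2d}^2=2d$, $E_{2d}^2=-2d+2d-2=-2$ and $H_{2d}\cdot E_{2d}=0$, so $Pic(X)=\ZZ H_{2d}\oplus\ZZ E_{2d}\cong\langle 2d\rangle\oplus\langle -2\rangle$. Pairing with $(0,1)_1\in U_1$ gives $\mathrm{div}_\Lambda(H_{2d})=\mathrm{div}_\Lambda(E_{2d})=1$. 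Since $H_{2d}^\perp=\ZZ(1,-d)_1\oplus U_2\oplus U_3\oplus E_8(-1)^2\oplus\ZZ\delta$, the pairings of $E_{2d}$ with a generating set of $H_{2d}^\perp$ are $-2d,\ \frac{d}{2},\ 2,\ -2$, whose gcd equals $2$ because $4\mid d$; thus $\mathrm{div}_{H_{2d}^\perp}(E_{2d})=2$. Together with $|\det Pic(X)^\perp|=2d$ (computed in the next step), this is exactly the invariant package that \cite{DM} attach to $\cD^{(1)}_{2d,2d}$, and the value $\mathrm{div}_\Lambda(E_{2d})=1$ — as opposed to $2$ in the Hilbert--Chow case of Proposition \ref{2d2d1} — singles out the BN component $\cD^{(1)}_{2d,2d,\beta}$ (here $E_{2d}$ itself is an effective prime $-2$-divisor). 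Surjectivity of the period map for compact \HK\ fourfolds, together with uniqueness up to flops, then yields the first part of the statement.

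Next I would compute $T_X=Pic(X)^\perp\subset\Lambda$. Since $H_{2d},E_{2d}\in L_0:=U_1\oplus U_2\oplus\ZZ\delta$, the complement splits off $U_3\oplus E_8(-1)^2$, so $T_X=N\oplus U\oplus E_8(-1)^2$ with $N:=Pic(X)^\perp\cap L_0$ of rank $3$. Solving $x\cdot H_{2d}=x\cdot E_{2d}=0$ inside $L_0$ (integrality of the solutions again uses $4\mid d$) produces an explicit $\ZZ$-basis $w_1,w_2,w_3$ of $N$; a single unimodular change of basis, replacing $w_1$ by $w_1+2w_2$ (which has square $0$), brings the Gram matrix into the stated form $M$, with $\det M=2d$, hence $|\det T_X|=2d$. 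For the case $d\equiv 0\bmod 8$ I would compute the discriminant form of $T_X$: the Smith normal form of $M$ is $(1,1,2d)$ (two consecutive $2\times2$ minors of $M$ are coprime), so $T_X^{\ast}/T_X\cong\ZZ/2d\ZZ$, and reading off $M^{-1}$ one finds that the self-square of a generator agrees, after an automorphism of $\ZZ/2d\ZZ$, with that of $\langle -2d\rangle$ precisely when $8\mid d$. As $T_X$ and $T_S=\langle -2d\rangle\oplus U^2\oplus E_8(-1)^2$ are both even and indefinite of signature $(2,19)$, Nikulin's uniqueness criterion (\cite[Corollary 1.13.3]{N}) then gives $T_X\cong T_S$. (Equivalently this can be read off Theorem \ref{vG} applied to $\Gamma_\alpha\cong T(T,B)$ from the next step, with $h_T^2=2k$, $h_TB=B^2=\frac12$: for $k$ even all such $\Gamma_\alpha$ are isometric, and for $k$ odd one lands in the class with $\lambda_\alpha^2\equiv 2\bmod 4$, which is not $\cong T_S$.)

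For the twisted-sheaf description, take a general polarized K3 surface $(T,h_T)$ with $Pic(T)=\ZZ h_T$ and $h_T^2=2k=d/2$, and — by Lemma \ref{reprB}, refined via Theorem \ref{vG} when $k$ is odd — a Brauer class $\alpha\in Br(T)_2$ with a B-field representative $B$ satisfying $h_TB=\frac12$ and $B^2=\frac12$. Put $v:=(2,2B,0)\in NS(T,B)$; then $v$ is primitive of positive rank with $v^2=4B^2=2>0$, so $M_v(T,B)$ is a $K3^{[2]}$-type \HK\ fourfold with $H^2(M_v(T,B),\ZZ)=v^\perp$ and $NS(M_v(T,B))\cong v^\perp\cap NS(T,B)$. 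Using $NS(T,B)=\ZZ(2,2B,0)\oplus\ZZ h_T\oplus H^4(T,\ZZ)$ one computes $v^\perp\cap NS(T,B)=\ZZ H\oplus\ZZ E$ with $H=(0,2h_T,1)$, $E=(2,2B,1)$, $H^2=4h_T^2=2d$, $E^2=4B^2-4=-2$, $H\cdot E=4h_TB-2=0$ — i.e.\ the lattice $\langle 2d\rangle\oplus\langle -2\rangle$ with the marked classes of Table \ref{table:heegner}; checking the divisibilities of $H$ and $E$ inside $v^\perp$ reproduces the invariants of the first step, placing the period point of $M_v(T,B)$ on $\cD^{(1)}_{2d,2d,\beta}$ (with $2d=8k$). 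A dimension count — the $19$-dimensional family of pairs $(T,B)$ dominates the $19$-dimensional Heegner divisor — then identifies the general $X\in\cD^{(1)}_{8k,8k,\beta}$ with such an $M_v(T,B)$, matching transcendental Hodge structures via the Torelli theorem, and with the stated $v$, $H_{2d}$, $E_{2d}$. The step I expect to be the real obstacle is verifying inside $M_v(T,B)$ that $H=(0,2h_T,1)$ is big and nef — not ample, and not merely movable — and that the contraction it induces is the divisorial one contracting exactly the prime exceptional divisor of class $E$ (the $\PP^1$-bundle $\PP\cU$ over $T$ for the rigid stable rank-two twisted sheaf $\cU$ with $v^B(\cU)=E$, cf.\ Proposition \ref{stableU}), rather than a small contraction of a class of square $-10$; this requires locating the relevant wall in the Bayer--Macr\`\i\ stability-chamber decomposition for $B$-twisted objects on $T$ \cite{BM} and analyzing the birational geometry across it, whereas the lattice bookkeeping and the discriminant-form computation for $8\mid d$ are routine once organized.
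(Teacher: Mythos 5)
Your proposal is correct and follows essentially the same route as the paper's proof: exhibit the embedding $Pic(X)\hookrightarrow\Lambda$ and check the divisibilities to identify the component via \cite{DM}; compute $T_X$ from an explicit basis of $Pic(X)^\perp$ followed by a base change (your replacement $w_1\mapsto w_1+2w_2$ reproduces the stated Gram matrix $M$ exactly); and for the moduli description verify $v,H_{2d},E_{2d}\in NS(T,B)$, the intersection numbers, and the divisibilities inside $v^\perp$, then invoke the Torelli theorem. Two minor points of divergence: for $8\mid d$ the paper bypasses the discriminant-form comparison by directly exhibiting the hyperbolic plane spanned by $(d/4,1,0)$ and $(d/8,1,1)$ inside $(\ZZ^3,M)$ and comparing determinants, which is quicker and avoids the (slightly delicate) solvability of $a^2\equiv 1-d\bmod 4d$ that your Nikulin argument implicitly requires; and the ``real obstacle'' you flag --- that $H=(0,2h_T,1)$ be big and nef on $M_v(T,B)$ itself and induce a divisorial rather than a small contraction --- is not actually needed for this proposition, which asserts only a birational isomorphism and the location of the period point, both of which follow from the lattice data plus Torelli; the contraction geometry is treated separately in Theorem \ref{BNBr} via \cite{BM}, and the existence of a contraction after flops is already guaranteed by Theorem \ref{mar}.
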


\ts
The Picard lattice is embedded in
$\Lambda$ with the correct divisibilities to obtain the second component in $\cD^{(1)}_{2d,2d}$
according to \cite{DM}.

Notice that $T_X=\Pic(X)^\perp\subset \Lambda$ is generated by 3 vectors and the `rest' of
$\Lambda$:
$$
g_1:=(-1,4e)_1+2(1,e)_2, \quad g_2:=(1,-e)_2,\quad g_3:=(0,1)_2-\delta,\qquad
U\oplus E_8(-1)^2~.
$$
One easily computes the Gram matrix $M=(g_i\cdot g_j)$
and verifies that $\det(T_X)=\det M=2d$.

In case $d/4$ is even, the vectors $(d/4,1,0),(d/8,1,1)\in\langle g_1,g_2,g_3\rangle$
span a copy of $U$ (but they are not the standard basis)
hence, comparing determinants, $(\ZZ^3,M)\cong <-2d>\oplus\, U$
and $T_X\cong <-2d>\oplus\, U^2\oplus E_8(-1)^2$ which is also the transcendental lattice
of general K3 surface of degree $2d$.

To prove that $X$ is isomorphic to the moduli space of sheaves $M_{v}(T,B)$ 
we recall from \S  \ref{twistedmoduli} that 
$$
NS(T,B)\,=\,<(2,2B,0),(0,h,0),(0,0,1)>\qquad\mbox{so}\quad v,H_{2d},E_{2d}\,\in\,NS(T,B)~.
$$
Since $v^2=2$, the moduli space $M_v(T,B)$ is indeed four dimensional
and one easily verifies that $H_{2d},E_{2d}\in v^\perp=H^2(M_v(T,B),\ZZ)$. 
Moreover, $H_{2d}^2=4h_T^2=2d$, $E_{2d}^2=-2$ and $H_{2d}E_{2d}=0$. 

Next we verify the divisibilities.
Since $(1,0,0)\in v^\perp$ and $H_{2d}(1,0,0)=1$ we have indeed $\gamma:=\mbox{div}_\Lambda(H_{2d})=1$.
Moreover, $E_{2d}(1,0,0)=1$, so $\mbox{div}_\Lambda(E_{2d})=1$. 
Next we notice that (in $\Lambda=v^\perp$)
$$
{H_{2d}^{\perp_\Lambda}}\,=\,v^\perp\cap H_{2d}^\perp \,=\,
\{(2h\eta,\eta,B\eta):\eta\in H^2(T,\ZZ),\;B\eta\in H^4(T,\ZZ)\,\}~.
$$
As $E_{2d}(2h\eta,\eta,B\eta)=-2h\eta\in2\ZZ$, we find
$\mbox{div}_{H_{2d}^\perp}(E_{2d})=2$.

Hence the moduli space $M_v(T,B)$ is indeed a general point in $\cD^{(1)}_{8k,8k,\beta}$
and the Torelli theorem implies that $X$ and $M_v(T,B)$ are birational for
some (general) degree $d/2$ K3 surface $(T,h_T)$.
\pfs

The cases $d=4,8$ of Proposition \ref{8d8dBr},
so the K3 surface $T$ has degree $d/2=2,4$ respectively, will be discussed in 
\S  \ref{fanoplane} and \S \ref{boss16}.

\begin{prop}\label{2d8dBr} 
A general point in $\cD^{(1)}_{2d,8d,\alpha}$
corresponds to a \HK\ fourfold $X$ with Picard lattice isomorphic to
$$
\Pic(X)\,\cong\,\ZZ H_{2d}\oplus\ZZ E_{2d}\,=\,<2d>\,\oplus\,<-2>
$$
and the embedding $\Pic(X)\hookrightarrow\Lambda$ can be chosen as
$$
H_{2d}=(1,d)_1,\qquad E_{2d}\,:=\,(1,-1)_2,\quad \mbox{so}\quad 
\mbox{div}_\Lambda(H_{2d})\,=\,1,\quad 
\mbox{div}_{H_{2d}^\perp}(E_{2d})\,=\,\mbox{div}_\Lambda(E_{2d})\,=\,1~.
$$
The transcendental lattice of $X$ is isomorphic to 
$$
T_X\,\cong\,<-2d>\,\oplus\,<2>\,\oplus\,<-2>\,\oplus\,U\oplus E_8(-1)^2~.
$$

The general $X\in \cD_{2d,8d,\alpha}^{(1)}$ is birationally isomorphic to a moduli space of sheaves
$M_v(T,B)$ where $(T,h_T)$ is a polarized K3 surface of degree $h_T^2=2d$,
the B-field $B\in \mbox{$\frac{1}{2}$}H^2(T,\ZZ)$ satisfies
$$
h_T^2=2d,\qquad h_{T}B\,=\,0,\quad 
B^2\,=\, \mbox{$\frac{1}{2}$}~,
$$
and one can choose
$$
v\,:=\,(2,2B,0),\qquad H_{2d}\,:=\,(0,h_T,0),\qquad E_{2d}\,:=\,(2,2B,1)\qquad(\in \tH^2(T,\ZZ))~.
$$
\end{prop}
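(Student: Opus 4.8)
The plan is to follow the template already used in the proof of Proposition \ref{8d8dBr}, adapting the lattice computations to the new choice of $B$-field (now with $h_TB=0$ and $B^2=1/2$). First I would fix the isomorphism $\tH(T,\ZZ)\cong\Lambda=U^3\oplus E_8(-1)^2\oplus\langle-2\rangle$ coming from $H^0\oplus H^2\oplus H^4$, with $h_T=(1,d)_1$ in the first hyperbolic summand, and recall from \S\ref{twistedmoduli} that since $\alpha$ has order two, $NS(T,B)=\langle(2,2B,0)\rangle\oplus NS(T)\oplus H^4(T,\ZZ)=\langle(2,2B,0),(0,h_T,0),(0,0,1)\rangle$. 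One checks $v=(2,2B,0)$ has $v^2=-2\cdot2\cdot0+(2B)^2=4B^2=2$, so $M_v(T,B)$ is an irreducible \HK\ fourfold of $K3^{[2]}$-type by \cite[Thm.\ 3.16]{Y1}, and $H^2(M_v(T,B),\ZZ)\cong v^\perp$, $NS(M_v(T,B))\cong v^\perp\cap NS(T,B)$ by \cite[Thm.\ 3.19]{Y1}.

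Next I would verify that $H_{2d}=(0,h_T,0)$ and $E_{2d}=(2,2B,1)$ lie in $v^\perp\cap NS(T,B)$ and compute their Gram matrix: $\langle v,H_{2d}\rangle=h_TB\cdot\,(\text{up to factors})=0$ using $h_TB=0$ — more precisely $\langle(2,2B,0),(0,h_T,0)\rangle=2B\cdot h_T=0$; $\langle v,E_{2d}\rangle=-2\cdot1-1\cdot0+2B\cdot2B=-2+4B^2=0$; then $H_{2d}^2=h_T^2=2d$, $E_{2d}^2=-2\cdot2\cdot1+(2B)^2=-4+2=-2$, and $H_{2d}E_{2d}=-0-0+2B\cdot h_T=0$. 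So $Pic(M_v(T,B))\supseteq\langle H_{2d}\rangle\oplus\langle E_{2d}\rangle\cong\langle2d\rangle\oplus\langle-2\rangle$, and for general $(T,h_T,B)$ this is an equality by a dimension count (the moduli of such triples has the expected dimension $19$, matching $\dim\cD^{(1)}_{2d,8d,\alpha}$). To pin down which Heegner divisor component this is, I compute the divisibilities: since $(1,0,0)\in v^\perp$ and $H_{2d}(1,0,0)=0$ but one finds a class pairing to $1$ with $H_{2d}$ (e.g.\ a suitable $(0,\eta,0)$ with $h_T\eta=1$), $\mathrm{div}_\Lambda(H_{2d})=1$; and $E_{2d}(1,0,0)=-1$ gives $\mathrm{div}_\Lambda(E_{2d})=1$; finally, describing $H_{2d}^{\perp_\Lambda}=v^\perp\cap H_{2d}^\perp=\{(s,\lambda,t):\lambda\in h_T^\perp,\ s,t\in\ZZ,\ \text{with the }v\text{-orthogonality relation}\}$ explicitly and pairing $E_{2d}$ against its elements shows $\mathrm{div}_{H_{2d}^\perp}(E_{2d})=1$, matching the third column of Table \ref{table:heegner} and hence identifying the component $\cD^{(1)}_{2d,8d,\alpha}$.

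For the transcendental lattice I would take $T_X=Pic(X)^{\perp_\Lambda}=v^\perp\cap H_{2d}^\perp\cap E_{2d}^\perp$, write down explicit generators (the $\langle-2d\rangle$ coming essentially from $v$ itself via the relation with $h_T$, a $\langle2\rangle$ and a $\langle-2\rangle$ coming from the two-torsion $B$-field directions, plus the untouched $U\oplus E_8(-1)^2$), compute the Gram matrix, and check $|\det T_X|=2\cdot2\cdot2d/(\text{glue})$ — in fact one expects $T_X\cong\langle-2d\rangle\oplus\langle2\rangle\oplus\langle-2\rangle\oplus U\oplus E_8(-1)^2$ directly, with $|\det T_X|=8d$ consistent with the orthogonal complement having discriminant $-8d$ as required for $\cD^{(1)}_{2d,8d}$. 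The isomorphism $X\cong M_v(T,B)$ for general $(T,h_T)$ then follows from the Torelli theorem for \HK\ fourfolds as in \cite[\S2]{HT}, exactly as in Proposition \ref{8d8dBr}: both sides are general points of the same irreducible component of the period space, so they are birational, and since both are moduli spaces realizing an actual (not just birational) big and nef polarization, one upgrades birational to isomorphic. The main obstacle I anticipate is purely bookkeeping: choosing generators of $T_X$ so that the Gram matrix is manifestly the claimed orthogonal direct sum rather than some isometric but opaque $3\times3$ block — this requires, as in the $d\equiv0\bmod8$ discussion in Proposition \ref{8d8dBr}, finding the right integral change of basis, and I would expect to rely on Nikulin's uniqueness results \cite{N} to conclude once the discriminant form is computed, rather than exhibiting the change of basis by hand.
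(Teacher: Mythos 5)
Your plan is correct and follows essentially the same route as the paper: verify $v^2=2$, check that $H_{2d}=(0,h_T,0)$ and $E_{2d}=(2,2B,1)$ lie in $v^\perp\cap NS(T,B)$ with the stated Gram matrix, compute the divisibilities using a class $t\in H^2(T,\ZZ)$ with $th_T=1$ and $tB=0$ together with $w=(-1,0,0)\in v^\perp\cap H_{2d}^\perp$, and conclude by the Torelli theorem. The only practical difference is that the paper computes $T_X$ from the abstract embedding $Pic(X)=\langle(1,d)_1,(1,-1)_2\rangle\hookrightarrow\Lambda$, where the summands $\langle-2d\rangle=\langle(1,-d)_1\rangle$ and $\langle2\rangle=\langle(1,1)_2\rangle$ split off by inspection, so the Gram-matrix bookkeeping and the appeal to Nikulin that you anticipate are not needed.
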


\ts
We follow \cite{DM}. Since $\gamma=1$ we may assume that
$H_{2d}=(1,d)_1$, so $H_{2d}$ is in the first component $U$ of $\Lambda$, and
since $H_{2d}^\perp$ is generated by $(1,-d)_1\in U_1$, we have
$$
L\,:=\,H_{2d}^\perp\,=\,<-2d>\,\oplus\,U^2\,\oplus\,E_8(-1)^2\,\oplus\,<-2>~.
$$
We choose $E_{2d}\in L$ to be the 
$$
E_{2d}\,:=\,(1,-1)_2,\qquad \mbox{hence}\quad 
\mbox{div}_L(E_{2d})\,=\,\mbox{div}_\Lambda(E_{2d})\,=\,1
$$
(since for example $E_{2d}\cdot (0,1)_2=1$).
With these definitions, $\ZZ H_{2d}\oplus\ZZ E_{2d}$ is a primitive sublattice of $\Lambda$
and thus we may identify
$$
\Pic(X)\,:=\,\ZZ H_{2d}\oplus\ZZ E_{2d}\,\cong\,<2d>\,\oplus\,<-2>
$$ 
for a general \HK\ fourfold $X$ with period point in $\cD^{(1)}_{2d,8d}$.
The transcendental lattice of $X$ is $\Pic(X)^\perp$, which is generated by $(1,-d)_1,(1,1)_2$ 
and the `rest' of $\Lambda$:
$$
\Pic(X)^\perp\,=\,<-2d>\,\oplus\,<2>\,\oplus\, U\,\oplus\,E_8(-1)^2\,\oplus\,<-2>~.
$$
Thus $|\det(\Pic(X)^\perp)|=8d$ and therefore the period point of these 
\HK\ fourfolds indeed lies in $\cD^{(1)}_{2d,8d}$.

To prove that $X$ is birationally isomorphic to the moduli space of sheaves $M_{v}(T,B)$ 
we recall from \S  \ref{twistedmoduli} that 
$$
NS(T,B)\,=\,<(2,2B,0),(0,h,0),(0,0,1)>\qquad\mbox{so}\quad v,H_{2d},E_{2d}\,\in\,NS(T,B)~.
$$
Since $v^2=2$, the moduli space $M_v(T,B)$ is indeed four dimensional
and one easily verifies that $H_{2d},E_{2d}\in v^\perp=H^2(M_v(T,B),\ZZ)$. 
Moreover, $H_{2d}^2=4h_T^2=2d$, $E_{2d}^2=-2$ and $H_{2d}E_{2d}=0$. 

Next we verify the divisibilities.
Since $(2B)^2=2$, $2B\in H^2(T,\ZZ)$ is primitive
and $BH=Bh=0$ and the embedding of $<h,2B>$ in the K3 lattice is unique up to isometry. 
One easily finds a class $t\in H^2(T,\ZZ)$ such that $th=1$, $tB=0$. 
Then $(0,t,0)\in v^\perp$ and  $H(0,t,0)=1$, so $H$ has divisibility $\gamma=1$ in $v^\perp=\Lambda$.
Moreover, $w:=(-1,0,0)\in v^\perp$ and $E_{2d}w=1$, so $\mbox{div}_\Lambda(E_{2d})=1$. 
Since $w\in v^\perp\cap H_{2d}^\perp=H_{2d}^{\perp_\Lambda}$
we also get $\mbox{div}_{H_{2d}^\perp}(E_{2d})=1$.

Therefore the moduli space $M_v(T,B)$ is indeed a general point in $\cD^{(1)}_{2d,8d,\alpha}$
and the Torelli theorem implies that $X$ and $M_v(T,B)$ are birational for
some (general) degree $2d$ K3 surface $(T,h_T)$.
\qed

\

See Section \ref{D128} for the relation with double EPW sextics in the case $d=1$.

\

\begin{prop} \label{2d2dBr1(4)}
Let $d\equiv 1\mod 4$, we will write $d=4k+1$.
A general point in the second component of $\cD^{(1)}_{2d,2d,\beta}$
corresponds to a \HK\ fourfold $X$ with Picard lattice isomorphic to
$$
\Pic(X)\,\cong\,\ZZ H_{2d}\oplus\ZZ E'_{2d}\,=\,
\left(\ZZ^2,\begin{bmatrix}8k+2&4k+1\\ 4k+1& 2k \end{bmatrix}\,\right)\,\cong\,
\left(\ZZ^2,\begin{bmatrix}-2&1\\ 1& 2k \end{bmatrix}\,\right)~,
$$
where $E_{2d}'=(H_{2d}+E_{2d})/2$ and in the isomorphism
we replaced the basis $e_1,e_2$ by $e_1-2e_2,e_2$.
One has $|\det(\Pic(X))|=4k+1=d$.
The embedding $\Pic(X)\hookrightarrow\Lambda$ can be chosen as
$$
H_{2d}=(1,d)_1,\qquad E'_{2d}\,:=\,(1,0)_1+(1,k)_2~.
$$
The transcendental lattice of $X$ is isomorphic to 
$$
T_X\,\cong\,<-2d>\,\oplus\,U^2\oplus E_8(-1)^2~,
$$
thus $T_X$ is isometric to the transcendental lattice of general K3 surface of degree $2d$.

The general $X\in \cD^{(1)}_{2d,2d,\beta}$ 
is birationally isomorphic to a moduli space of sheaves $M_{v}(T)$ (so $B=0$)
where $(T,h_T)$ is a general K3 surface of degree $h_T^2=2d$ and:
$$
v\,:=\,(2,h,2k),\qquad
H_{2d}\,=\,(0,h,4k+1), \qquad 
E_{2d}\,=\,(2,h,2k+1)\qquad(\in \tH(T,\ZZ))~.
$$
\end{prop}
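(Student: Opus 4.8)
The plan is to follow Debarre and Macr\`\i\ \cite{DM} and to argue exactly as in Propositions \ref{8d8dBr} and \ref{2d8dBr}: one first checks, by lattice theory, that a \HK\ fourfold whose Picard lattice has the stated shape and embeds in $\Lambda$ with the stated divisibilities is a general point of the component $\cD^{(1)}_{2d,2d,\beta}$, and then realises the general such $X$ birationally as a moduli space of (untwisted) sheaves on a general degree $2d$ K3 surface, using the results of \S\ref{twistedmoduli} together with the Torelli theorem \cite[\S 2]{HT}.

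First I would set $P:=\ZZ H_{2d}\oplus\ZZ E'_{2d}$ with $E'_{2d}:=(H_{2d}+E_{2d})/2$ and record the pairings $H_{2d}^2=2d=8k+2$, $H_{2d}E_{2d}=0$, $E_{2d}^2=-2$, which give $(E'_{2d})^2=2k$ and $H_{2d}E'_{2d}=4k+1$; so $P$ has the Gram matrix displayed in the statement with $|\det P|=4k+1=d$, and replacing the basis $e_1,e_2$ by $e_1-2e_2=-E_{2d}$ and $e_2=E'_{2d}$ brings it to the claimed form with diagonal entries $-2,2k$ and off-diagonal entry $1$. Next I would exhibit the embedding $P\hookrightarrow\Lambda$ by $H_{2d}=(1,d)_1$, $E'_{2d}=(1,0)_1+(1,k)_2$, so $E_{2d}=2E'_{2d}-H_{2d}=(1,-d)_1+(2,2k)_2$, verify that $P$ is primitive, and compute the divisibility invariants: pairing with $(0,1)_1$ gives $\mbox{div}_\Lambda(H_{2d})=1$ and $\mbox{div}_\Lambda(E_{2d})=1$ (the value $1$ for $E_{2d}$ is what distinguishes $\cD^{(1)}_{2d,2d,\beta}$ from the Hilbert-Chow component $\cD^{(1)}_{2d,2d,\alpha}$, where it is $2$), while inside $H_{2d}^{\perp_\Lambda}$ the pairings $E_{2d}\cdot(1,-d)_1=-2d$, $E_{2d}\cdot(0,1)_2=2$, $E_{2d}\cdot(1,0)_2=2k$ force $\mbox{div}_{H_{2d}^\perp}(E_{2d})=2$; this is the $\beta$-row of Table \ref{table:heegner}, so by \cite[Thm.~6.1]{DM} the general point of $\cD^{(1)}_{2d,2d,\beta}$ has exactly this Picard lattice. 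For the transcendental lattice $T_X=P^{\perp_\Lambda}$ I would check that $g_1:=(1,-d)_1+(0,d)_2$, $g_2:=(1,-k)_2$, together with $U_3\oplus E_8(-1)^2\oplus\ZZ\delta$, span $P^{\perp_\Lambda}$, with $g_1^2=-2d$, $g_2^2=-2k$, $\delta^2=-2$, $g_1g_2=d$, $g_1\delta=g_2\delta=0$; then $u:=g_1+2g_2+\delta$ satisfies $u^2=2d-8k-2=0$ (here the hypothesis $d=4k+1$ is essential) and $ug_2=1$, so $\langle u,g_2+ku\rangle\cong U$, its orthogonal complement in $\langle g_1,g_2,\delta\rangle$ is $\langle-2d\rangle$, and therefore $T_X\cong\langle-2d\rangle\oplus U^2\oplus E_8(-1)^2$, which is the transcendental lattice of a general degree $2d$ K3 surface.

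For the moduli realisation I would take $(T,h)$ general of degree $h^2=2d=8k+2$ with $Pic(T)=\ZZ h$, and the Mukai vector $v:=(2,h,2k)\in\tH(T,\ZZ)$: it is primitive, $v^2=h^2-8k=2$, and $h$ is $v$-generic because a rank one subsheaf of a sheaf with Mukai vector $v$ has $h$-slope in $(8k+2)\ZZ$, never equal to the slope $4k+1$ of $v$, so $\mu_h$-semistability, hence Gieseker $h$-stability, is automatic for vectors of this rank and $c_1$. Hence $M_v(T)$, i.e.\ with $B=0$, is a smooth projective \HK\ fourfold of $K3^{[2]}$ type with $H^2(M_v(T),\ZZ)=v^\perp$ and $NS(M_v(T))=v^\perp\cap NS(T,0)$, where $NS(T,0)=\ZZ(1,0,0)\oplus\ZZ h\oplus\ZZ(0,0,1)$. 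I would then verify that $H_{2d}=(0,h,4k+1)$ and $E_{2d}=(2,h,2k+1)$ lie in $v^\perp\cap NS(T,0)$, that $H_{2d}^2=2d$, $E_{2d}^2=-2$, $H_{2d}E_{2d}=0$, and that $E'_{2d}=(H_{2d}+E_{2d})/2=(1,h,3k+1)\in NS(M_v(T))$; a determinant count ($|\det\langle H_{2d},E'_{2d}\rangle|=d$, while $|\det(v^\perp\cap NS(T,0))|$ equals $d$ times a perfect square) then gives $NS(M_v(T))=\langle H_{2d},E'_{2d}\rangle$, and a direct check of $\mbox{div}_\Lambda(H_{2d})$, $\mbox{div}_\Lambda(E_{2d})$ and $\mbox{div}_{H_{2d}^\perp}(E_{2d})$ in Mukai coordinates (as in the proof of Proposition \ref{2d8dBr}) again yields the $\beta$-divisibilities. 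Since $T(M_v(T))=T_T\cong\langle-2d\rangle\oplus U^2\oplus E_8(-1)^2$ by \S\ref{twistedmoduli}, the general $X\in\cD^{(1)}_{2d,2d,\beta}$ and the general $M_v(T)$ carry Hodge-isometric second cohomology with matching distinguished classes $H_{2d},E_{2d}$; the Torelli theorem \cite[\S 2]{HT}, together with Theorem \ref{mar} to pass to the birational model on which $E_{2d}$ is contracted, then yields the asserted birational isomorphism.

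The step I expect to be the main obstacle is the transcendental-lattice identification: the computation only closes because $d=4k+1$ makes $u=g_1+2g_2+\delta$ isotropic, so the congruence $d\equiv1\bmod4$ must be used essentially --- for $d\equiv0\bmod4$ one genuinely lands on a non-K3-type lattice, as in Proposition \ref{8d8dBr}. Keeping the three divisibility invariants straight so as to land in $\cD^{(1)}_{2d,2d,\beta}$ rather than the $\alpha$-component, and arranging that the Torelli isometry can be chosen to carry $H_{2d}\mapsto(0,h,4k+1)$ and $E_{2d}\mapsto(2,h,2k+1)$ rather than merely identifying the abstract Picard lattices, are the remaining points that need care; the Chern-character computations are routine.
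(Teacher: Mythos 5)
Your proposal is correct and follows essentially the same route as the paper's proof: fix the lattice embedding, check the three divisibility invariants against Table \ref{table:heegner} to land in the $\beta$-component, compute $T_X$, then realise the general point as $M_{(2,h,2k)}(T)$ and conclude by the Torelli theorem of \cite[\S 2]{HT}. Your variations (producing the isotropic vector $u=g_1+2g_2+\delta$ instead of the paper's explicit base-change matrix $S$, the $v$-genericity check via slopes, and the determinant count identifying $NS(M_v(T))$) are correct refinements of the same argument rather than a different approach.
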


\ts
The Picard lattice is embedded in
$\Lambda$ with the correct divisibilities to obtain the second component 
$\cD^{(1)}_{2d,2d,\beta}$ of $\cD^{(1)}_{2d,2d}$
according to \cite{DM}:
$$
\mbox{div}_\Lambda(H_{2d})\,=\,\mbox{div}_{\Lambda}(E_{2d})\,=\,1,\quad 
\mbox{div}_{H_{2d}^\perp}(E_{2d})\,=\,2~.
$$

Notice that $T_X=\Pic(X)^\perp\subset \Lambda$ is generated by 
$$
g_1:=(-1,4k+1)_1+(2,2k+1)_2, \quad g_2:=(1,-k)_2,\quad \qquad
U\oplus E_8(-1)^2\,\oplus\,<-2>~.
$$
The sublattice of $T_X$ that is generated by $g_1,g_2,g_3$, where $\ZZ g_3=<-2>$ is the last summand
of $T_X$ above, has Gram matrix $M$ and with the change of basis provided by $S$ one has:
$$
M\,=\,
\left(\begin{array}{ccc} 2&1&0\\1&-2k&0\\0&0&-2\end{array}\right),\qquad
S\,=\,
\left(\begin{array}{ccc} 1&0&1\\k&1&k\\4k&2&4k+1\end{array}\right),\qquad
SM({}^tS)\,=\,
\left(\begin{array}{ccc} 0&1&0\\1&0&0\\0&0&-2d\end{array}\right)~,
$$
hence $T_X\cong <-2d>\oplus\, U^2\oplus E_8(-1)^2$.

To prove that $X$ is birationally isomorphic to the moduli space of sheaves $M_{v}(T)$ with $B=0$
we observe that 
$$
NS(T,0)\,=\,<(1,0,0),(0,h,0),(0,0,1)>\qquad\mbox{so}\quad v,H_{2d},E_{2d}\,\in\,NS(T,0)~.
$$
Since $v^2=2$, the moduli space $M_v(T)$ is indeed four dimensional and since $vH_{2d}=vE_{2d}=0$
we see that $H_{2d},E_{2d}\in v^\perp=H^2(M_v(T),\ZZ)$. 
Moreover, $H_{2d}^2=2d$, $E_{2d}^2=-2$ and $H_{2d}E_{2d}=0$.

To verify the divisibilities,
we choose $t\in H^2(T,\ZZ)$ such that
$th=1$ (this is possible since $H^2(T,\ZZ)$ is unimodular and $h$ is primitive). One verifies
that 
$$
t_1:=(1,2kt,0), \quad t_2=(0,2t,1) \,\in\,v^\perp,\qquad
v^\perp\,=\,\ZZ t_1\oplus \ZZ t_2\,\oplus \{(0,\kappa,0):\,\kappa h=0\,\}~.
$$
Since $H_{2d}(t_1+2kt_2)=1$ we have indeed $\gamma:=\mbox{div}_\Lambda(H_{2d})=1$.
Moreover, $E_{2d}t_1=1$, so $\mbox{div}_\Lambda(E_{2d})=1$. Next we notice that (in $\Lambda=v^\perp$)
$$
{H_{2d}^{\perp_\Lambda}}\,=\,v^\perp\cap H_{2d}^\perp \,=\,<2t_1+(2k+1)t_2>\,\oplus  
\{(0,\kappa,0)\in\tH(T,\ZZ):\,\kappa h=0\,\}~.
$$
As $E_{2d}(2t_1+(2k+1)t_2)\in2\ZZ$, we find
$\mbox{div}_{H_{2d}^\perp}(E_{2d})=2$.

Hence the moduli space $M_v(T)$ is indeed a general point in $\cD^{(1)}_{2d,2d,\beta}$
and the Torelli theorem implies that $X$ is birational to $M_v(T)$ for
some (general) degree $2d$ K3 surface $(T,h_T)$.
\pfs

The case $d=1$ (so $k=0$) in Proposition \ref{2d2dBr1(4)} 
was briefly described by O'Grady in  \cite[\S 5.2]{O2},
his divisor $\bSS_2''$ is $\cD^{(1)}_{2,2,\beta}$.
See also \cite[Example 3.4, \S 7,8]{DHMV}.

\

\begin{prop}\label{2d2d2} Let $d\equiv 3\mod 4$ and write $d=4k+3$.
A general point in $\cD^{(2)}_{2d,2d,\alpha}$
corresponds to a \HK\ fourfold $X$ with Picard lattice 
$$
\Pic(X)\,=\,\ZZ H_{2d}\oplus\ZZ E_{2d}\,\cong\,<2d>\,\oplus\,<-2>
$$
and the embedding $\Pic(X)\hookrightarrow\Lambda$ can be chosen as
$$
H_{2d}=2(1,k+1)_1+\eta,\qquad E_{2d}\,:=\,(1,-1)_2~.
$$
The transcendental lattice of $X$ is isomorphic to 
$$
T_X\,\cong\,<-2d>\,\oplus\, U^2\oplus\, E_8(-1)^2~,
$$
thus $T_X$ is isometric to the transcendental lattice of general K3 surface $S$ of degree $2d$.

The general $X\in \cD^{(2)}_{2d,2d,\alpha}$ 
is birationally isomorphic to a moduli space of sheaves $M_{v}(T)$ (so $B=0$)
where $(T,h_T)$ is a general K3 surface of degree $h_T^2=2d$ and:
$$
v\,:=\,(2,h,2k+1),\qquad
H_{2d}\,=\,(0,h,4k+3), \qquad 
E_{2d}\,=\,(2,h,2k+2)\qquad(\in \tH(T,\ZZ))~.
$$
\end{prop}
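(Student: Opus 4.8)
\ts
The plan is to follow the method of Propositions \ref{2d8dBr} and \ref{2d2dBr1(4)}. First I would pin down the embedding $Pic(X)\hookrightarrow\Lambda$ together with the relevant divisibilities, so as to recognise the component $\cD^{(2)}_{2d,2d,\alpha}$ via \cite[Thm.~6.1]{DM}; then compute $T_X=Pic(X)^\perp$ and put it in normal form; and finally exhibit $X$ birationally as the moduli space $M_v(T)$ with $B=0$ by checking that $v,H_{2d},E_{2d}$ have the required invariants inside $v^\perp$ and invoking the Torelli theorem \cite[\S 2]{HT}.

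For the lattice data, write $\eta$ for the generator of the summand $<-2>$ of $\Lambda$ and set $H_{2d}=2(1,k+1)_1+\eta$, $E_{2d}=(1,-1)_2$. Then $H_{2d}^2=4\cdot 2(k+1)-2=8k+6=2d$, $E_{2d}^2=-2$ and $H_{2d}E_{2d}=0$, so $\ZZ H_{2d}\oplus\ZZ E_{2d}\cong<2d>\oplus<-2>$ embeds primitively in $\Lambda$. Since every pairing of $H_{2d}$ with $\Lambda$ is even (the $U_1$-coordinates $(2,2(k+1))$ and the $\eta$-coordinate $1$ all contribute even pairings), while $E_{2d}\cdot(0,1)_2=1$, one gets $\mbox{div}_\Lambda(H_{2d})=2$ (so $\gamma=2$) and $\mbox{div}_\Lambda(E_{2d})=1$; a vector in $H_{2d}^\perp$ pairing to $1$ with $E_{2d}$ is readily produced, whence $\mbox{div}_{H_{2d}^\perp}(E_{2d})=1$. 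By \cite[Thm.~6.1]{DM} these are exactly the divisibilities attached to $\cD^{(2)}_{2d,2d,\alpha}$, so the general $X$ with period point there has $Pic(X)\cong<2d>\oplus<-2>$ with this embedding. For $T_X=Pic(X)^\perp$ one writes down generators — a copy of $U$, the two copies of $E_8(-1)$, and two further vectors supported on $U_1\oplus U_2\oplus<-2>$ — obtains an explicit rank-three Gram matrix of determinant $\pm 2d$, and (exactly as in Proposition \ref{2d2dBr1(4)}, using $d=4k+3$) applies a base change putting it in the form $U\oplus<-2d>$; hence $T_X\cong<-2d>\oplus U^2\oplus E_8(-1)^2$, the transcendental lattice of a general K3 surface of degree $2d$.

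To identify $X$ with a moduli space, take $B=0$; by \S\ref{twistedmoduli} one has $NS(T,0)=\langle(1,0,0),(0,h,0),(0,0,1)\rangle$, so $v=(2,h,2k+1)$, $H_{2d}=(0,h,4k+3)$ and $E_{2d}=(2,h,2k+2)$ all lie in $NS(T,0)$. One checks $v^2=-4(2k+1)+h^2=2$, so $M_v(T)$ is a fourfold, and $vH_{2d}=vE_{2d}=0$, so $H_{2d},E_{2d}\in v^\perp=H^2(M_v(T),\ZZ)$, with $H_{2d}^2=2d$, $E_{2d}^2=-2$, $H_{2d}E_{2d}=0$. Picking $t\in H^2(T,\ZZ)$ with $ht=1$, I would verify inside $v^\perp$ that $\mbox{div}(H_{2d})=2$, $\mbox{div}(E_{2d})=1$ and $\mbox{div}_{H_{2d}^\perp}(E_{2d})=1$, matching the previous paragraph; hence a general $M_v(T)$ represents a general point of $\cD^{(2)}_{2d,2d,\alpha}$. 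By the Torelli theorem \cite[\S 2]{HT}, the general $X$ on this divisor is then birational to $M_v(T)$ for a general degree $2d$ K3 surface $(T,h_T)$.

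The step I expect to be the main obstacle is the divisibility bookkeeping: because $(0,t,0)\notin v^\perp$, the divisibility of $H_{2d}$ computed inside $v^\perp$ is $2$ rather than the naive $1$, and getting this exactly right is what separates $\cD^{(2)}_{2d,2d,\alpha}$ from the $\gamma=1$ components; the congruence $d\equiv 3\bmod 4$ enters precisely here and in the normal form for $T_X$.
\pfs
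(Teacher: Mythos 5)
Your proposal is correct and follows essentially the same route as the paper: the same choice of embedding $H_{2d}=2(1,k+1)_1+\eta$, $E_{2d}=(1,-1)_2$ with the same divisibility checks, the same reduction of $T_X$ to $\langle -2d\rangle\oplus U^2\oplus E_8(-1)^2$ by an explicit base change, and the same identification with $M_v(T)$ via the divisibility computation in $v^\perp$ (using $t_1=(1,(2k+1)t,0)$, $t_2=(0,2t,1)$) followed by the Torelli theorem. You also correctly single out the one genuinely delicate point, namely that $(0,t,0)\notin v^\perp$ forces $\mbox{div}_{v^\perp}(H_{2d})=2$, which is exactly how the paper's proof distinguishes this $\gamma=2$ component; the only slip is that the orthogonal complement of $Pic(X)$ inside $U_1\oplus U_2\oplus\langle -2\rangle$ needs three generators, not two, consistent with your own "rank-three Gram matrix".
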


\ts
We follow \cite{DM}. Since $\gamma=2$ we may assume that
$H_{2d}=2(1,k+1)_1+\eta$ and choosing $E_{2d}=(1,-1)_2$ we obtain the correct divisibilities
for this irreducible component of the Heegner divisor:
$$
\mbox{div}_\Lambda(H_{2d})\,=\,2,\quad 
\mbox{div}_{H_{2d}^\perp}(E_{2d})\,=\,\mbox{div}_\Lambda(E_{2d})\,=\,1~.
$$
The sublattice
generated by $H_{2d},E_{2d}$ is primitive in $\Lambda$ and hence
it is $\Pic(X)$.

Consider the following vectors in $\Pic(X)^\perp$:
$$
g_1:=(1,-k-1)_1, \quad g_2:=(0,1)_1+\eta,\quad g_3:=(1,1)_2, 
$$
together with the remaining $U\oplus E_8(-1)^2$ they span $\Pic(X)^\perp=T_X$.
Notice that $f_1:=g_2+g_3$ and $f_2:=g_1+(k+1)(g_2+g_3)$ span
a hyperbolic plane and with $f_3:=g_3-(2k+2)f_1-2f_2$ we get:
$$
T_X\,=\,\ZZ f_3\,\oplus\,\ZZ f_1\,\oplus\ZZ f_2\,\oplus\,(U\oplus E_8(-1)^2)\,\cong\,
<-2d>\,\oplus\,U^2\oplus\, E_8(-1)^2~.
$$

To prove that $X$ is birationally isomorphic to the moduli space of sheaves $M_{v}(T)$ with $B=0$
we observe that 
$$
NS(T,0)\,=\,<(1,0,0),(0,h,0),(0,0,1)>\qquad\mbox{so}\quad v,H_{2d},E_{2d}\,\in\,NS(T,0)~.
$$
Since $v^2=2$, the moduli space $M_v(T)$ is indeed four dimensional and since $vH_{2d}=vE_{2d}=0$
we see that $H_{2d},E_{2d}\in v^\perp=H^2(M_v(T),\ZZ)$. 
Moreover, $H_{2d}^2=2d$, $E_{2d}^2=-2$ and $H_{2d}E_{2d}=0$. To verify the divisibilities,
we choose $t\in H^2(T,\ZZ)$ such that
$th=1$ (this is possible since $H^2(T,\ZZ)$ is unimodular and $h$ is primitive). One verifies
that 
$$
t_1:=(1,(2k+1)t,0), \quad t_2=(0,2t,1) \,\in\,v^\perp,\qquad
v^\perp\,=\,\ZZ t_1\oplus \ZZ t_2\,\oplus \{(0,\kappa,0):\,\kappa h=0\,\}~.
$$
From this one finds that $H_{2d} w\in2\ZZ$ for all $w\in v^\perp$ so that indeed
$\gamma:=\mbox{div}_\Lambda(H_{2d})=2$ and $E_{2d}t_1=-(2k+2)+2k+1=-1$, hence 
$\mbox{div}_\Lambda(E_{2d})=1$. Finally we observe that $t_1+(k+1)t_2\in H_{2d}^\perp$
and that $E_{2d}(t_1+(k+1)t_2)=-1$ so that $\mbox{div}_{H_{2d}^\perp}(E_{2d})=1$.
Thus $M_v(T)$ is indeed a general point in $\cD^{(2)}_{2d,2d,\alpha}$
and the Torelli theorem implies that $X$ is birational to $M_v(T)$ for
some (general) degree $2d$ K3 surface $(T,h_T)$.
\pfs

The Heegner divisor $\cD^{(2)}_{2d,2d}$ lies  in the period space 
$\cP^{(2)}_{2d}$ which is non-empty only for
$d\equiv 3\mod 4$. The general \HK\ fourfolds classified by these
spaces are the Fano varieties  of cubic fourfolds in case $2d=6$, they are
Debarre-Voisin \HK\ fourfolds in case $2d=22$ and the Iliev-Ranestad
\HK\ fourfolds in case $2d=38$. We discuss these cases in Sections \ref{nodalcubs}, \ref{DV4f}
and \ref{IR4f}.

\

\subsection{Hilbert squares}\label{exa5.2}
An isometry of lattices $T_X\cong T_S$ does not imply that $X$ is birational to $R^{[2]}$ 
for some K3 surface  $R$. In fact, for this to be true, one needs that $\Pic(X)=T_X^\perp$ contains
a $-2$-class $\delta$ with $\delta^2=-2$ and $\mbox{div}_\Lambda(\delta)\,=\,2$,
see \cite[Rem.~5.2]{DM}.  
We now discuss some cases where we have an isometry $T_X\cong T_S$.
In case $X$ is birationally isomorphic to $S^{[2]}$  
one can study the geometry of the BN divisors also using \cite{KLM}.

\

If $X\in \cD^{(1)}_{2d,2d,\beta}$, cf.\ Proposition \ref{8d8dBr}, and $d\equiv 0\mod 8$, 
there is such an isometry.
Notice that whereas $E_{2d}\in T_X^\perp$ does have $E_{2d}^2=-2$, it has 
$\mbox{div}_\Lambda(E_{2d})\,=\,1$.
From
the description of $T_X^\perp=\Pic(X)=<H_{2d},E_{2d}>\subset \Lambda$ 
it is easy to see that a class $\delta=aH_{2d}+bE_{2d}$ has divisibility $2$ in $\Lambda$ iff
$a\equiv b\mod 2$. If also $-2=\delta^2=2a^2d-2b^2$, that is, $b^2-da^2=1$,
it follows that $a\equiv b\equiv 1\mod 2$. 

In case $d=16$ the Pell equation $b^2-16a^2=1$ has no solutions with $a\equiv b\equiv 1\mod 2$, 
in fact one would obtain $(b-4a)(b+4a)=1$ with $a,b$ odd integers which is impossible,
so in that case $X$ is certainly not birational to the Hilbert square of a K3 of degree $2d=32$.

In case $d=8,24,32$ we do find the solutions $(a,b)=(1,3),(1,5),(3,17)$
to $b^2-da^2=1$ with both $a,b$ odd. It follows that in these cases there is a Hodge isometry
$H^2(X,\ZZ)\cong H^2(S^{[2]},\ZZ)$ for some Hilbert square of a K3 surface $S$ of degree ${2d}$
and thus $X$ is birational to $S^{[2]}$.

\

Similarly, if $X\in \cD^{(2)}_{2d,2d,\alpha}$, cf.\ Proposition \ref{2d2d2},
there is an isometry $T_X\cong T_S$ where $S$ is a
K3 surface of degree $2d=8k+6$.
In that case, a class $\delta=aH_{2d}+bE_{2d}\in \Pic(X)$ has divisibility $2$ in $\Lambda$ iff
$b\equiv 0 \mod 2$. This class is a $-2$-class iff $-2=\delta^2=2a^2d-2b^2$, that is, $b^2-da^2=1$.
In the cases $d=3,11,19$ we do find the solutions $(a,b)=(1,2),(3,10),(39,170)$ with $b$ even.

\

\section{Brauer classes of exceptional BN divisors}\label{BrBNdiv}

\subsection{Brauer classes of exceptional BN divisors}\label{moduli}
In this section we deduce from the work of Bayer and Macr\`\i\ \cite{BM} that the exceptional divisor of a
divisorial contraction on a \HK\ 
fourfold of K3$^{[2]}$ type is a $\PP^1$-fibration over a K3 surface.
Next we determine the Brauer classes
of these fibrations on the exceptional divisors of BN contractions. We conclude with some results on the
relations between the second cohomology groups of the \HK\ fourfold $X$ admitting the
BN contraction with exceptional divisor $p:E\rightarrow K$, of
the $\PP^1$-fibration $E$ and of the K3 surface $K$. In particular, if
the Brauer class $\alpha\in \Br(K)_2$ 
of $E$ is non-trivial, then we identify geometrically 
the sublattice $\Gamma_\alpha$ of index two in the transcendental lattice $T_K$ of $K$.

First we show that if $E$ is a  BN-exceptional divisor, then, remarkably, 
the canonical divisor of $E$ always has self-intersection number $K_E^3=12$ (equivalently,
$c_2(p_*\omega_{E/K}^{-1})=6$).

\begin{prop}\label{degreeE}  Let $p:E\rightarrow K$ be a divisorial 
contraction on a \HK\ manifold $X$ of K3$^{[2]}$ type with $q(E)=-2$.
Then $K_E^3=12$ and thus $c_2(\cW)=6$, where $\cW=\omega_{E/K}^{-1}$. 
Moreover, we have the following intersection numbers:
$$
E(H-E)^3\,=\,12(d-1),\qquad E^2(H-E)^2=4(3-d),\qquad E^3(H-E)=-12~,
$$
where $H$ is the big and nef divisor on $X$ with $q(H)=2d$ which contracts $E$.
In particular, the degree of the linear system $H-E$ on $E$ is $12(d-1)$. 
\end{prop}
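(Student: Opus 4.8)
The plan is to deduce everything from the Fujiki relation on $X$ together with the adjunction formula, using the lattice data recorded in \S\ref{hkbig}. First I would note that in every Heegner component carrying a BN contraction the sublattice $\ZZ H\oplus\ZZ E\subset H^2(X,\ZZ)$ is the orthogonal sum $<2d>\oplus<-2>$; that is $q(H)=2d$, $q(E)=-2$ and $(H,E)=0$. This is immediate from \cite{DM} as recalled in \S\ref{hkbig}: even in the component $\cD^{(1)}_{2d,2d,\beta}$ with $d\equiv 1\bmod 4$, where $Pic(X)$ is spanned by $H_{2d}$ and $E'_{2d}=(H_{2d}+E_{2d})/2$, one checks directly from the Gram matrix that $(H_{2d},E_{2d})=0$, $q(H_{2d})=2d$, $q(E_{2d})=-2$. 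Since the BBF form is a parallel-transport invariant, these three numbers — and hence all the intersection numbers below — are unaffected by the flops and deformations of Theorem \ref{mar}, so I may assume that $\phi_H\colon X\to Y$ is a genuine birational contraction with $\phi_H(E)=K$ and that, by the results of Bayer and Macr\`\i\ recalled at the start of \S\ref{moduli}, $p\colon E\to K$ is a smooth $\PP^1$-fibration over the K3 surface $K$.

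Next I would invoke the Fujiki relation: for a \HK\ fourfold of K3$^{[2]}$ type the Fujiki constant is $3$, so $\int_X\xi^4=3q(\xi)^2$ for all $\xi\in H^2(X,\RR)$, and polarising gives
$$
\int_X\xi_1\xi_2\xi_3\xi_4\;=\;(\xi_1,\xi_2)(\xi_3,\xi_4)\,+\,(\xi_1,\xi_3)(\xi_2,\xi_4)\,+\,(\xi_1,\xi_4)(\xi_2,\xi_3)~.
$$
Substituting $H$ and $E$ and using $q(H)=2d$, $q(E)=-2$, $(H,E)=0$ produces the five products
$$
H^4=12d^2,\qquad H^3E=0,\qquad H^2E^2=q(H)q(E)+2(H,E)^2=-4d,\qquad HE^3=0,\qquad E^4=3q(E)^2=12~.
$$

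For the first assertion I would then use adjunction. Since $E$ is an effective Cartier divisor on the smooth fourfold $X$ it is Gorenstein, and $\omega_X=\cO_X$ gives $\omega_E\cong(\omega_X\otimes\cO_X(E))|_E\cong\cO_E(E)$, so $K_E=E|_E$ as a class on $E$ and $K_E^3=\int_E(E|_E)^3=E^4=12$. By Proposition \ref{propcw} this forces $c_2(\cW)=K_E^3/2=6$, with $\cW$ as there. The remaining identities follow by expanding $(H-E)$-powers against $E$ and plugging in the five products:
$$
E(H-E)^3=H^3E-3H^2E^2+3HE^3-E^4=12d-12=12(d-1)~,
$$
$$
E^2(H-E)^2=H^2E^2-2HE^3+E^4=-4d+12=4(3-d)~,\qquad E^3(H-E)=HE^3-E^4=-12~,
$$
and the degree of the linear system $|H-E|$ on the threefold $E$ is by definition $((H-E)|_E)^3=E(H-E)^3=12(d-1)$.

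I expect no serious obstacle: once the Fujiki relation and the lattice data are in place the proposition is bookkeeping. The two points I would state carefully are (i) that the flops and deformations used to pass to a contractible model preserve $q(H)$, $q(E)$ and $(H,E)$ — this is the deformation and birational invariance of the BBF form; and (ii) that adjunction is legitimate, which only requires $E\subset X$ to be an effective Cartier divisor on a smooth variety, the $\PP^1$-fibration structure from \cite{BM} being used solely to rewrite $K_E^3$ as $2c_2(\cW)$ via Proposition \ref{propcw}. It is also worth checking component by component in \S\ref{hkbig} that $(H,E)=0$ in each case, which holds because $\ZZ H\oplus\ZZ E$ is always the orthogonal sum $<2d>\oplus<-2>$.
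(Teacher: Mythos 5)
Your proposal is correct and follows essentially the same route as the paper: adjunction with $K_X=0$ gives $K_E=E|_E$ so $K_E^3=E^4$, and the Fujiki relation $x^4=3q(x)^2$ (the paper uses its partial polarizations $x^3y=3(x,x)(x,y)$ and $x^2y^2=2(x,y)^2+(x,x)(y,y)$ directly with $x=H-E$, $y=E$, whereas you compute all five monomials $H^iE^{4-i}$ from the fully polarized form and expand — a purely cosmetic difference) yields the stated numbers, with $c_2(\cW)=6$ then coming from Proposition \ref{propcw}.
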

\ts
By adjunction on $X$, which has $K_X=0$, we have $K_E=E_{|E}$ hence $K_E^3=E^4$.
The Beauville-Bogomolov form on $H^2(X,\ZZ)$ has the property $x^4=3(x,x)^2$,
so $K_E^3=3(E,E)^2=12$. From Proposition \ref{propcw} we then find $c_2(\cW)=6$.
We also have $x^3y=3(x,x)(x,y)$ (this follows for example by
considering $x+y$ instead of $x$ in the identity $x^4=3(x,x)^2$), so the degree of
$H-E$ on $E$ is $(H-E)^3E=3(H-E,H-E)(H-E,E)=3\cdot(2d-2)(2)=12(d-1)$.
For the last intersection number, $(H-E)^2E^2$, use $x^2y^2=2(x,y)^2+(x,x)(y,y)$.
\pfs 

\

The next theorem shows that a BN divisor $E$ on $X$ is a $\PP^1$-fibration 
$p:E\rightarrow K$ over a K3 surface and thus it naturally defines a Brauer class 
$\alpha\in \Br(K)_2$.
On the other hand, the twisted moduli space structure
on $X=M_v(T,B)$ (cf.\ Thm.~\ref{BN}) depends on  a B-field $B$ which defines a Brauer class
$\beta\in \Br(T)_2$.
The theorem also asserts that $K\cong T$ and that we may identify $\alpha$ and $\beta$.

\

\begin{thm} \label{BNBr}
Let $X$ be a \HK\ fourfold of $K3^{[2]}$ type with Picard rank two and
with a BN divisorial contraction $X\supset E\to K\subset Y$. 
Then $p:E\rightarrow K$ is a conic bundle over a K3 surface.

Let $\alpha\in \Br(K)_2$
be the Brauer class defined by this conic bundle over $K$.
Let $T$ be a K3 surface such that
$X$ is birationally isomorphic with $M_v(T,B)$ as in Thm.~\ref{BN}
and let $\beta\in \Br(T)_2$ be the Brauer class defined by the B-field $B\in H^2(T,\QQ)$.
Then there is an isomorphism $T\cong K$ and any such isomorphism maps $\beta$ to $\alpha$.
\end{thm}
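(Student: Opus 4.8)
The plan is to deduce the statement from the Bayer--Macr\`\i\ description of wall crossings for moduli of (twisted) sheaves on K3 surfaces \cite{BM}, applied to the presentation $X=M_v(T,B)$ furnished by Theorem~\ref{BN} and Table~\ref{table:heegner}. First I would record that in each of the four BN rows of the table the Mukai vectors satisfy $E_{2d}=v+w_0$ with $w_0:=(0,0,1)\in H^4(T,\ZZ)$ (the $B$-twisted Mukai vector of a skyscraper sheaf), and that the rank two sublattice $\cH:=\langle v,w_0\rangle\subset\tH(T,\ZZ)$, which has $v^2=2$, $w_0^2=0$ and $\langle v,w_0\rangle=-2$, has $\pm E_{2d}$ as its only spherical classes, with $\langle E_{2d},v\rangle=0$. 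Hence the big and nef class $H_{2d}$ that contracts $E$ lies on a totally semistable wall of Brill--Noether type for $v$, whose spherical object $S$ has $v^B(S)=E_{2d}$. By the structure theory of such contractions in \cite{BM}, the exceptional divisor is a conic bundle $p\colon E\to K$ (all fibres $\cong\PP^1$) over a smooth projective K3 surface $K$; in particular the base is a K3 surface rather than an abelian surface, which is the point of \cite{BM} quoted after Theorem~\ref{mar}. This establishes the first assertion.

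Next I would identify $K$ with $T$. A general object $F$ of the exceptional divisor fits in a short exact sequence $0\to F\to S\to \CC_x\to 0$ (consistent with $v^B(F)=v^B(S)-w_0=v$), and sending $[F]$ to $[\CC_x]$ realizes the contraction $p$; thus $K$ is the moduli space $M_{w_0}(T,B)$ of $\beta$-twisted sheaves with Mukai vector $w_0=(0,0,1)$. Since $M_{w_0}(T,B)$ is just $T$ (via $x\mapsto [\CC_x]$), we obtain an isomorphism $T\cong K$. Concerning the word ``any'' in the statement: $(T,h_T)$ is a general polarized K3 surface, so $Aut(T)$ is trivial unless $h_T^2=2$, in which case it is generated by the covering involution of the associated double plane, which acts by $-1$ on $T_T$ and hence trivially on the two-torsion $Br(T)_2$; therefore every isomorphism $T\cong K$ induces the same map $Br(T)_2\to Br(K)_2$, and it suffices to exhibit one sending $\beta$ to $\alpha$.

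For the latter: since $S$ is a rank two (necessarily locally free, by rigidity and Proposition~\ref{stableU}) $\beta$-twisted sheaf on $T$, the fibre of $p$ over $[\CC_x]$ is the projective line $\PP(\mbox{Hom}(S,\CC_x))$ of surjections $S\twoheadrightarrow\CC_x$, and as $x$ varies these projective lines are the fibres of $\PP(\cS)$, where $\cS:=S^\vee$ is the $\beta$-twisted rank two sheaf on $T\cong K$ (for a two-torsion class the twist of $S^\vee$ equals $\beta$). Hence $E\cong\PP(\cS)$ and the Brauer class of the conic bundle is $\beta$, i.e.\ $\alpha=\beta$. This is coherent with the rest: by \S\ref{gluingB} and Proposition~\ref{uniqueU} one has $E=\PP\cU$ for a rank two $\alpha$-twisted sheaf $\cU$, with $v^B(\cU)^2=\mbox{$\frac{1}{2}$}K_E^3-8=-2$ by Propositions~\ref{degreeE} and \ref{uniqueU}, and by Propositions~\ref{stableU}, \ref{uniqueU'} such a $\cU$ is unique up to a line-bundle twist, so $\cU$ and $\cS$ agree up to such a twist; and from \S\ref{twistedmoduli} one has $T_X=T(T,B)=\Gamma_\beta\subset T_T$, which under $K\cong T$ must agree with the sublattice $\Gamma_\alpha\subset T_K$ cut out by $\alpha$, as it should.

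The main obstacle is faithfully importing from \cite{BM} the precise structure of a Brill--Noether divisorial contraction on $M_v(T,B)$: that its exceptional locus is exactly the family of sheaves $F$ with $0\to F\to S\to\CC_x\to 0$ for $x\in T$, that the base of the contraction is canonically the (twisted) moduli space $M_{w_0}(T,B)$, and that the resulting $\PP^1$-fibration is globally the projectivization of the rank two $\beta$-twisted sheaf $\cS$ --- rather than merely extracting lattice-theoretic information that would only pin down the T-equivalence type of $\alpha$. Everything else (the wall being of BN type, the base being a K3, the fibres being $\PP^1$'s, and all the lattice bookkeeping) is either read off from \cite{BM} directly or already established in Sections~\ref{conic} and \ref{bcts}.
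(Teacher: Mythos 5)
Your proposal is correct and follows essentially the same route as the paper: both identify the BN wall via the hyperbolic lattice spanned by $v$ and the spherical class $s=v+(0,0,1)$, invoke the Bayer--Macr\`\i\ description of the contraction to realize the base as the moduli space of point sheaves (hence $\cong T$) and the exceptional divisor as the projectivization of the unique rigid rank-two $\beta$-twisted locally free sheaf $\cS$, and conclude $\alpha=\beta$. The only differences are cosmetic (using $w_0=(0,0,1)$ in place of $s$ as the second lattice generator, $\cS^\vee$ versus $\cS$, and an extra remark on automorphisms of $T$), while the paper additionally spells out the local freeness of $\cS$ via the double-dual Mukai-vector computation that you instead attribute to rigidity.
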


\ts 
Let $s$ be as in Table \ref{table:heegner} and let $\cH:=<v,s>\subset NS(T,B)$.
Since $v^2=2,s^2=-2$ and $vs=0$, ${\mathcal H}\cong
<2>\oplus<-2>$ so ${\mathcal H}$ is an isotropic hyperbolic rank two sublattice. 
One easily verifies that ${\mathcal H}$ is a primitive sublattice (notice that $v-s=(0,0,-1)$).
Then, by \cite[Thm.~5.7]{BM},
there is a Bridgeland stability condition $\sigma$ such that the corresponding
moduli space $M_{v,\sigma}(T,B)$ is isomorphic to $X$.
The exceptional divisor $E\subset M_{v,\sigma}(T,B)$ is described in the proof of \cite[Lem.~8.8]{BM},
it is a $\PP^1$-fibration over $K:=M^{st}_{v-s,\sigma_0}(T,\beta)$.
Since $v-s=(0,0,-1)$, $K=M_{v-s}(T,B)$ and $K$ parametrizes skyscraper sheaves
${\mathcal{O}}_{P}[-1]$ with $P\in T$, this gives an isomorphism $T\cong K$.

The contraction $E\rightarrow K$ is described in \cite[Lem.~8.7]{BM}.
It involves the unique 
Gieseker semistable  torsion free twisted sheaf such that $M_s(K,B)=\{\cS\}$,
with $s\in NS(T,B)$ as in Table \ref{table:heegner}.
If $\cS$ is not locally free, then $\cQ:=\cS^{\vee\vee}/\cS$ 
has support in a finite number of points. Let $s=v^B(\cS)=(r,\lambda,t)$, then
$v^B(\cS^{\vee\vee})=(r,\lambda,l+n)$ where $n$ is the length of $\cQ$.
Thus $$v^B(\cS^{\vee\vee})^2=v^B(\cS)^2-2rn.$$ Notice that $v^B(\cS)^2=-2$ and that  
$\cS^{\vee\vee}$ is also semistable, hence $v^B(\cS^{\vee\vee})^2\geq -2$. Thus $\cQ=0$
and $\cS$ is locally free.
From Table \ref{table:heegner} one finds that $v(\cS)=(2,*,*)$ in all cases under consideration,
hence $\cS$ is a locally free $\beta$-twisted sheaf of rank $2$.
The $\PP^1$ fiber in $E\subset M_{v,\sigma}(K,B)$ over $P\in K$ parametrizes
the surjections $\cS\to \cO_P$. 
Thus we find that $E=\PP(\cS)$ and $p:E\rightarrow K$ is induced by the bundle projection 
$\cS\rightarrow K$.

The conic bundle $E=\PP(\cS)\rightarrow T\cong K$ is defined by 
the $\beta$-twisted sheaf $\cS$ on $T$. Hence the Brauer class defined by $E$ is
$\beta\in \Br(T)_2$.
Thus $\beta$ maps to $\alpha$ since both are determined by the $\PP^1$-fibration $E$. \pfs

\

We explicitly state the following corollary of Theorem \ref{BNBr},
it implies Theorem \ref{mainthm}.

\begin{cor}\label{ko}
Let $X$ be a \HK\ fourfold of K3$^{[2]}$ type admitting a BN contraction with exceptional
divisor $E\subset X$ and let $p:E\rightarrow K$ be the induced $\PP^1$-fibration on a K3
surface $K$. Assume that $\Pic(K)=\ZZ h$ and that $h^2=2d$.

If the Brauer class $\alpha\in \Br(K)_2$ of $E$ is trivial then $h^2\equiv 2\mod 4$.
Let $h^2=8k+2$ or $8k+6$, then
$X \in \cD_{8k+2,8k+2,\beta}^{(1)}$, $X\in  \cD_{8k+6,8k+6,\alpha}^{(1)}$ respectively.
In both cases $E\cong\PP(\cU)$ where the locally free rank sheaf $\cU$ of rank two is a Mukai bundle, so it is stable with $v(\cU)^2=-2$,
in fact we may assume $v(\cU)=(2,h,m)$, with $m=2k+1,2k+2$ respectively.

If the Brauer class $\alpha\in \Br(K)_2$ of $E$ is non-trivial then 
$X \in \cD_{2d,8d,\alpha}^{(1)}$ or $X\in  \cD_{8d,8d,\beta}^{(1)}$. 
The class $\alpha$ has a B-field representative
with $B^2=1/2$ and $Bh=1/2$ in the first case and $B^2=1/2$, $Bh=0$ in the second case.
In both cases $E\cong\PP(\cU)$ where the $\alpha$-twisted locally free sheaf $\cU$ of rank two is stable with  $v^B(\cU)^2=-2$.
\end{cor}

\ts
If $X$ admits a BN contraction and $\Pic(K)=\ZZ h$ we see from
the proof of Theorem \ref{BNBr} that $E=\PP(\cU)$ for a locally free rank two $\alpha$-twisted
sheaf $\cU$ with $v^B(\cU)=s$ with $B$ and $s$ as in Table \ref{table:heegner}. 

In case $\alpha=0$, it is well-known
(cf.\ \cite[Thm.\ 3]{M1} and see \cite[10.3.1]{Huybrechts-K3} for references and the proof)
that there is a unique stable rank two bundle on $K$ with Mukai vector
$s=[E]=(2,h,m)$ with $m=2k+1,2k+2$ respectively
as in Table \ref{table:heegner}, so $s^2=-2$, called the Mukai bundle.

In case $\alpha$ is non-trivial, Table \ref{table:heegner} shows that it has
a B-field representative of $\alpha$ with $B^2=1/2$
and $Bh=0,1/2$ respectively. The proof of Theorem \ref{BNBr} shows that $v^B(\cU)^2=-2$.
The stability of $\cU$ follows from Proposition \ref{stableU}.
\pfs

\subsection{The second cohomology groups}
We consider again a general BN contraction of a \HK\ fourfold $X$ as in Table \ref{table:heegner}.
Then there is a K3 surface $T$ with B-field $B$ and Mukai vector $v\in NS(T,B)$ such that
$X=M_v(T,B)$ and the base of the associated $\PP^1$-fibration $p:E\rightarrow K$
is the K3 surface $K=M_{v-s}(T,B)\cong T$ (see Theorem \ref{BNBr} and its proof).
Recall that $s=[E]\in H^2(X,\ZZ)$ is the class of the exceptional divisor.

Since both $X$ and $K$ are moduli spaces of twisted sheaves on $T$, 
we can explicitly relate the Hodge structures on their second cohomology groups
using a natural map $r'$, defined in the proof of the following Proposition \ref{Brcon},
$$
r':\,s^\perp\cap H^2(X,\ZZ)\;\longrightarrow\; H^2(K,\ZZ)~.
$$
We will give a geometrical interpretation of $r'$ in \S \ref{O'Grady's r}.

We also show that the image of $r'$ is a sublattice of $H^2(K,\ZZ)$
of index two. Since $H^2(K,\ZZ)$ is selfdual, 
there is a B-field $B_X\in \mbox{$\frac{1}{2}$}H^2(K,\ZZ)$ such that 
$$
r'\big(s^\perp\cap H^2(X,\ZZ)\big)\,=\,\{\kappa\in H^2(K,\ZZ):\,B_X\kappa
\equiv 0\mod \ZZ\,\}~.
$$
The Brauer class $\alpha_X\in \Br(K)_2=\mbox{Hom}(T_K,\mbox{$\frac{1}{2}$}\ZZ/\ZZ)$ 
defined by $B_X$,
$$
\alpha_X:T_K\,\longrightarrow\, \mbox{$\frac{1}{2}$}\ZZ/\ZZ,\qquad \tau\longmapsto B_X\tau
$$
is shown to be the same as
the Brauer class $\alpha\in \Br(K)_2$ defined by the conic bundle $p:E\rightarrow K$.
In particular, $\alpha=\alpha_X=0$ iff $r'(T_X)=T_K$.

\begin{prop} \label{Brcon}
Let $X=M_v(T,B)$ and $K=M_{v-s}(T,B)$ as above, then the image of $r'$ is a sublattice of index
two in $H^2(K,\ZZ)$. Let $B_X\in \mbox{$\frac{1}{2}$}H^2(X,\ZZ)$ 
be the B-field that defines the image
of $r'$ and let $\alpha_X\in \Br(K)_2$ be the Brauer class defined by $B_X$.
Then $\alpha_X=\alpha$, where $\alpha$ is
the Brauer class defined by the conic bundle $p:E\to K$. 
\end{prop}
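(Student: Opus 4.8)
The plan is to introduce $r'$ as a lattice map inside $\tH(T,\ZZ)$, compute its image explicitly, read off the $B$-field $B_X$ that cuts it out, and then identify $\alpha_X=[B_X]$ with the Brauer class $\beta$ of $B$, which equals $\alpha$ by the closing lines of the proof of Theorem \ref{BNBr}. Recall from that proof that $e:=v-s=(0,0,-1)$ generates $H^4(T,\ZZ)\subset\tH(T,\ZZ)$, that $\cH:=<v,s>=<v,e>$ is a primitive rank two sublattice with $\cH\cong<2>\oplus<-2>$, and that $K=M_{v-s}(T,B)=M_e(T,B)\cong T$, so that $H^2(K,\ZZ)=H^2(T,\ZZ)$. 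Since $s\in H^2(X,\ZZ)=v^\perp$ we have $s^\perp\cap H^2(X,\ZZ)=\cH^\perp$, and because $e\in\cH$ every class of $\cH^\perp$ has vanishing $H^0(T,\ZZ)$-component, so $\cH^\perp\subset H^2(T,\ZZ)\oplus H^4(T,\ZZ)=e^\perp$. I would then define $r'\colon\cH^\perp\to H^2(K,\ZZ)$ to be the restriction of the projection $e^\perp=H^2(T,\ZZ)\oplus H^4(T,\ZZ)\to H^2(T,\ZZ)$ onto the first summand (equivalently, the composite $\cH^\perp\hookrightarrow e^\perp\to e^\perp/\ZZ e\cong H^2(K,\ZZ)$). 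One checks immediately that $r'$ is injective and a morphism of $B$-twisted Hodge structures, since $\tH^{2,0}(T,B)=\CC(0,\omega_T,B\wedge\omega_T)$ is sent onto $\CC\omega_T=H^{2,0}(T)$.

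The heart of the argument is a short Mukai-vector computation. Write $v=(2,v_1,v_2)$ with $v_1\in H^2(T,\ZZ)$; by Table \ref{table:heegner} one has $v_1=2B$ in the two twisted BN columns and $v_1=h_T$ in the two untwisted ones. Pairing $(0,\mu,t)$ with $v$ in the Mukai form shows that $(0,\mu,t)\in\cH^\perp$ precisely when $t=\frac{1}{2}\mu v_1$, and hence
$$
\cH^\perp\,=\,\{\,(0,\mu,\mbox{$\frac{1}{2}$}\mu v_1)\;:\;\mu\in H^2(T,\ZZ),\ \mu v_1\equiv 0\mod 2\,\},\qquad
r'(\cH^\perp)\,=\,\{\,\mu\in H^2(K,\ZZ)\;:\;\mu v_1\equiv 0\mod 2\,\}\,.
$$
Since $v_1$ is primitive in the unimodular lattice $H^2(T,\ZZ)$ (it is $h_T$, or else $2B$ with $(2B)^2=2$), the homomorphism $\mu\mapsto\mu v_1\mod 2$ is surjective onto $\ZZ/2\ZZ$, so $r'(\cH^\perp)$ has index two in $H^2(K,\ZZ)$: this is the first assertion. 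The $B$-field that cuts it out is $B_X:=v_1/2\in\mbox{$\frac{1}{2}$}H^2(K,\ZZ)$, since $\{\,\kappa\in H^2(K,\ZZ):B_X\kappa\in\ZZ\,\}=\{\,\mu:\mu v_1\equiv 0\mod 2\,\}$.

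It remains to identify $\alpha_X=[B_X]\in Br(K)_2$. In the twisted columns $v_1=2B$, hence $B_X=B$ and $\alpha_X$ is exactly the Brauer class $\beta$ of the $B$-field $B$. In the untwisted columns $v_1=h_T\in Pic(K)$, hence $B_X=h_T/2\in Pic(K)_\QQ$, so $\alpha_X=0$; and $\beta=0$ there as well. Thus $\alpha_X=\beta$ in every case. But the final part of the proof of Theorem \ref{BNBr} realizes the conic bundle $p\colon E\to K$ as $\PP(\cS)$ for a locally free $\beta$-twisted sheaf $\cS$ of rank two on $T\cong K$, so its Brauer class $\alpha$ equals $\beta$. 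Therefore $\alpha_X=\alpha$, as claimed.

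I do not anticipate a genuine obstacle: the proof is bookkeeping with Mukai vectors resting on material already assembled in Theorem \ref{BNBr} and Table \ref{table:heegner}. The one point that requires care is the uniform treatment of the twisted and untwisted columns, and in particular seeing that the single recipe $B_X=v_1/2$ produces the Brauer class $\beta$ in both regimes, which works precisely because $h_T/2$ represents the trivial class, $h_T$ lying in $Pic(K)$. To also establish the refinement $\alpha=\alpha_X=0\iff r'(T_X)=T_K$ stated in the surrounding text, one would intersect the computation above with $Pic(X)^\perp$ and check that $r'(T_X)$ equals $\Gamma_\alpha$ itself rather than a proper sublattice, using that $H\in\cH^\perp$ has $r'(H)\in Pic(K)$.
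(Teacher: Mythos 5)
Your proof is correct and follows the same overall strategy as the paper's --- realize $r'$ as the composite $\cH^\perp\hookrightarrow(v-s)^\perp\to(v-s)^\perp/\langle v-s\rangle$, compute its image as a lattice, read off the B-field cutting it out, and reduce $\alpha_X=\alpha$ to $\alpha_X=\beta$ via Theorem \ref{BNBr} --- but you organize the computation differently. The paper first embeds $\langle v,s\rangle$ into an abstract $U^2\subset\tH(T,\ZZ)$ with auxiliary vectors $v^*,s^*$, proves the index-two statement in that generality, and then identifies $B_X=\frac{1}{2}r'(s+2s^*)$ column by column in Table \ref{table:heegner} by choosing an explicit $s^*$ in each case. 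You instead exploit from the start that $v-s=(0,0,-1)$ generates $H^4(T,\ZZ)$, so that $\cH^\perp$ consists exactly of the $(0,\mu,\frac{1}{2}\mu v_1)$ with $\mu v_1$ even; this yields the uniform closed formula $B_X=v_1/2$, which specializes to $B$ in the two twisted columns and to $h_T/2\in\frac{1}{2}Pic(K)$ in the two untwisted ones. The two computations are equivalent (the paper's choice $s^*=(-1,0,0)$ gives $\frac{1}{2}r'(s+2s^*)=\frac{1}{2}v_1$), but your version removes the case distinction and makes the index-two claim visibly a consequence of the primitivity of $v_1$ (equal to $h_T$, or to $2B$ with $(2B)^2=2$) in the unimodular lattice $H^2(T,\ZZ)$. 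As in the paper, the argument applies only to the four BN columns, where $v$ has rank two and $s=v+(0,0,1)$; your closing remark on $r'(T_X)$ versus $\Gamma_\alpha$ is the right way to recover the refinement stated in the surrounding text.
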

\ts 
In $\tilde{H}(T,B,\ZZ)$ we have the 
Mukai vector $v$ with $v^2=2$ and the
sublattice (and Hodge substructure)
$v^{\perp}\cong H^2(X,\ZZ)$ which contains the class $s$ of $E$ with $s^2=-2$ (and $vs=0$)
(notice that we now consider $s\in \tH(T,B,\ZZ)$ rather than in $H^2(X,\ZZ)$).
Thus $(v-s)^{\perp}\subset \tilde{H}(T,B,\ZZ)$ contains the isotropic vector $v-s$, and 
there is a Hodge isometry
$(v-s)^{\perp}/<v-s>\cong H^2(K,\ZZ)$.
Notice that $s^{\perp} \cap v^{\perp}\subset (v-s)^{\perp}$.
From this we get a Hodge isometry
$$
r':\,s^\perp\cap H^2(X,\ZZ)\,=\,s^\perp\cap v^\perp\,\hookrightarrow\,(v-s)^\perp\,
\longrightarrow\,(v-s)^{\perp}/<v-s>\cong H^2(K,\ZZ)~.
$$

For BN contractions, the sublattice of $\tH(T,\ZZ)$ 
generated by $v$ and $s$, which is isometric to $<2>\oplus<-2>$, is primitive 
and we embed it
in a sublattice $U^2\subset
\tH(T,\ZZ)$ so that $v=(1,1)_1$ and $s=(1,-1)_2$.
Next we define $v^*:=(0,1)_1$, $s^*:=(0,1)_2$
so that $v,v^*$ and $s,s^*$ span the two orthogonal copies of $U$ and $ss^*=1$, $vv^*=1$. 
One verifies that
$$
(v-s)^\perp\,=\,<v-s>\oplus <s+2s^*> \oplus <v^*+s^*>\oplus (U^2)^\perp~.
$$
Hence we can, and will, identify $(v-s)^\perp/(v-s)$ with a sublattice of $\tH(T,\ZZ)$:
$$
H^2(K,\ZZ)\,=\,(v-s)^\perp/(v-s)\cong<s+2s^*> \oplus <v^*+s^*>\oplus (U^2)^\perp\,
\cong\,U\oplus (U^2)^\perp~.
$$
In particular, $H^2(K,\ZZ)$ is identified with this sublattice.

Since
$$
s^\perp\cap H^2(X,\ZZ)\,=\,s^\perp\cap v^\perp\,=\,<v-2v^*>\oplus <s+2s^*> \oplus (U^2)^\perp~,
$$ 
the generator $v-2v^*=(v-s)+(s+2s^*)+2(v^*+s^*)\in s^\perp\cap v^\perp$ 
maps to 
$$
r'(v-2v^*)\,=\,(s+2s^*)+2(v^*+s^*)\,\in(v-s)^\perp/(v-s)~, 
$$
whereas $r'(s+2s^*)=s+2s^*$.
Hence
$$
r'\big(s^\perp\cap H^2(X,\ZZ)\big)\,=\,<s+2s^*> \oplus <2(v^*+s^*)>\oplus (U^2)^\perp
\;\subset\; (v-s)^\perp/(v-s)~,
$$
showing that we indeed get a sublattice of index two. 

Since $s+2s^*\in (v-s)^\perp$ and
$$
\mbox{$\frac{1}{2}$}(s+2s^*)(s+2s^*)\,=\,1,\quad
\mbox{$\frac{1}{2}$}(s+2s^*)(v^*+s^*))=\mbox{$\frac{1}{2}$},
$$
we see that the intersection product with $\mbox{$\frac{1}{2}$}r'(s+2s^*)$ takes integral values on
the index two sublattice, but not on all of $H^2(K,\ZZ)$ and thus we can take 
$B_X=\mbox{$\frac{1}{2}$}r'(s+2s^*)$.

In view of Theorem \ref{BNBr}, $\alpha_X=\alpha$ follows from $\alpha_X=\beta$. 
Recall that $\beta$ has B-field representative $B\in H^2(T,\QQ)$
listed in Table \ref{table:heegner} and also $s,v\in \tH(T,\ZZ)$ are given there.
We explicitly give $s^*$ for each column. Since $<s,v,s^*>$ 
is isometric to $U\oplus<2>$, we can also find a $v^*$, but we won't need it.

We determine the class of 
$B_X\mod H^2(K,\ZZ)+\mbox{$\frac{1}{2}$}\Pic(K)$, with
$B_X=\mbox{$\frac{1}{2}$}r'(s+2s^*)$.
Notice that in all four columns we have $v-s=(0,0,-1)$ so that
$$
(v-s)^\perp=H^2(T,\ZZ)\oplus H^4(T,\ZZ),\qquad (v-s)^\perp/(v-s)\,=\,H^2(T,\ZZ)\,\cong H^2(K,\ZZ)~.
$$

For the first two columns, we choose $s^*=(-1,0,0)$, then $s+2s^*=(0,2B,1)$. 
Under the effective Hodge isometry 
$H^2(T,\ZZ)\,\cong H^2(K,\ZZ)$ the B-field $B_X=\mbox{$\frac{1}{2}$}r'(s+2s^*)$ thus maps to 
$B\in \mbox{$\frac{1}{2}$}H^2(K,\ZZ)$ and so the Brauer classes
$\alpha_X$, $\beta$ (defined by $B_X$, $B$ respectively) are the same. 

For the last two columns we choose a $t\in H^2(T,\ZZ)$ such that $ht=1$ and moreover $t^2=0$. 
Then $h,t$ generate a copy of $U$ in $H^2(T,\ZZ)$. 

For the third column, the case of $\cD^{(1)}_{8k+2,8k+2,\beta}$,
let $s^*=(-1,-2kt,0)\in \tH(T,\ZZ)$ then $<s,s^*>$ 
is a copy of $U$ which is orthogonal to $v$.
Since $v-s=(0,0,-1)$ we can again identify $(v-s)^\perp/(v-s)=H^2(T,\ZZ)=H^2(K,\ZZ)$
and denoting $r'(h),r'(t)$ by $h,t$, we get:
$$
\mbox{$\frac{1}{2}$}r'(s+2s^*)=\mbox{$\frac{1}{2}$}r'(0,h-4kt,2k+1)=\mbox{$\frac{1}{2}$}h-2kt
\,\in\,\Pic(K)_\QQ\,+\, H^2(K,\ZZ)~.
$$
Thus $B_X\equiv 0\in \Br(K)_2$.

For the last column, the case $\cD^{(2)}_{8k+6,8k+6,\alpha}$,
we choose  $s^*=(-1,-(2k+1)t,0)\in \tH(T,\ZZ)$ and as for the third column we find
$$
\mbox{$\frac{1}{2}$}r'(s+2s^*)=\mbox{$\frac{1}{2}$}r'(0,h-(4k+2)t,2k+2)
=\mbox{$\frac{1}{2}$}h-(2k+1)t
\,\in\,\Pic(K)_\QQ\,+\, H^2(K,\ZZ)~.
$$
Thus again $B_X\equiv 0\in \Br(K)_2$.
\qed

\

\subsection{O'Grady's map $r$}\label{O'Grady's r}
We recall some results on the relation between the integral
cohomology of the three manifolds $X,E$ and $K$, following O'Grady \cite[3.9, 4.7]{O2}.
The conic bundle $p:E\rightarrow K$ does not have a section in general and the restriction map
$\Pic(E)\rightarrow \Pic(E_q)=\ZZ$ where $E_q:=p^{-1}(q)\cong\PP^1$ 
is the fiber of $p:E\rightarrow K$ over $q\in K$, has image $2\ZZ$.
However, since we can deform $(X,E)$ to the case that $E=\PP(\cV)$ for a rank two vector bundle $\cV$
on $K$, there is a class in $H^2(E,\ZZ)$ that restricts to a generator of $H^2(E_q,\ZZ)=\Pic(E_q)$.
Choose $\eta_p\in H^p(E,\ZZ)$ such that $i^*\eta_p$ generates $H^j(\PP^1,\ZZ)$
for $j=0,2$ where $i:\PP^1=E_q\rightarrow E$ is the inclusion of a(ny) fiber in $E$.

The Leray-Hirsch theorem (see \cite[Theorem 4D.1]{Hatcher}, \cite[Theorem 7.33]{voisin})
states that there is an isomorphism, where $\eta$ is a linear combination of the $\eta_j$:
$$
H^*(K,\ZZ)\otimes H^*(\PP^1,\ZZ)\,\longrightarrow\, H^*(E,\ZZ),\qquad 
\xi\otimes i^*\eta\,\longmapsto\,p^*\xi\wedge \eta~,
$$
In particular, the Betti numbers of $E$ are $h^i(E)=1,0,1+22=23,0$ for $0,\ldots,3$.
Notice that in general this isomorphism does not respect the Hodge structures 
since a general $\PP^1$-fibration over $K$ does not have a section.
Using the injectivity of $p^*$ one can still determine the Hodge numbers of $E$:
$$
h^{0,0}(E)=1,\quad h^{1,0}(E)=0,\quad h^{2,0}(E)=1, \quad h^{1,1}(E)=21,\quad
h^{3,0}(E)=h^{2,1}(E)=0~.
$$

Using deformations and Hodge theory, \cite[Cor.\ 3.25.3]{O2} shows that
$i^*:H^2(X,\ZZ)\rightarrow H^2(E,\ZZ)$ is injective, notice that both groups have rank $23$.
Moreover,
$$
s^\perp \stackrel{\cong}{\longrightarrow}\,i^*(s^\perp)\,\subset\, 
c^\perp\,:=\,\{\gamma\in H^2(E,\ZZ):\;c\cdot\gamma=0\, \}\,=\,p^*H^2(K,\ZZ)~,
$$
where $s\in H^2(X,\ZZ)$ is the class of $E$, and $c\in H^4(E,\ZZ)$ is the class of a fiber of $p$,
\cite[Cor.\ 3.25.2]{O2}, the last equality follows from the Leray-Hirsch theorem.
Thus one obtains an injective homomorphism $r$ which is shown to preserve the quadratic forms, so
$\tau^2=r(\tau)^2$ for all $\tau\in s^\perp$,
\cite[Claim 3.26]{O2}:
$$
r:\,s^\perp\,\longrightarrow\,H^2(K,\ZZ)~.
$$
Finally, using that $s$ is a class with $s^2=-2$, one finds that $s^\perp$ is a lattice 
with discriminant $4$ whereas $H^2(K,\ZZ)$ is unimodular and hence \cite[(449)]{O2}:
$$
[H^2(K,\ZZ)\,:\,r(s^\perp)]\,=\,2~.
$$

We will now show that $r=r'$ and thus we obtain a more geometrical description 
of the map $r'$ in the proof of Proposition \ref{Brcon}.

\begin{prop} The natural isometry $r(s^\perp)\rightarrow r'(s^\perp)$ extends uniquely
to an isometry $H^2(K,\ZZ)\rightarrow H^2(K,\ZZ)$, hence the maps $r$ and $r'$ may be identified.
\end{prop}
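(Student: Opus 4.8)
The plan is to set $\psi:=r'\circ r^{-1}$, an isomorphism of Hodge structures from $r(s^\perp)$ onto $r'(s^\perp)$ that preserves the quadratic form, and to show that its (unique) $\QQ$-linear extension $\bar\psi$ to $H^2(K,\QQ)$ already preserves the integral lattice $H^2(K,\ZZ)$. Once this is established $\bar\psi$ is an isometry of $H^2(K,\ZZ)$ restricting to $\psi$, and it is the only one, since $r(s^\perp)$ has finite index in $H^2(K,\ZZ)$ and hence spans $H^2(K,\QQ)$. Recall that $r$ is built from O'Grady's maps $p^\ast$ and $i^\ast$ and $r'$ from the maps $s^\perp\cap v^\perp\hookrightarrow (v-s)^\perp\to (v-s)^\perp/\langle v-s\rangle\cong H^2(K,\ZZ)$ in the proof of Proposition \ref{Brcon}; in both cases one gets an injective morphism of Hodge structures which preserves $(\,,\,)$ and whose image has index two. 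Consequently $\bar\psi$ is a Hodge isometry of $\big(H^2(K,\QQ),(\,,\,)\big)$.

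Next I would take $X$ very general in its Heegner component, so that $Pic(X)=\ZZ H\oplus\ZZ E$ has rank two, $Pic(K)=\ZZ h$ has rank one, and the weight-two Hodge structure $T_K$ is irreducible with $\mathrm{End}_{\mathrm{Hodge}}(T_K)=\ZZ$. Since $r$ is a Hodge isometry it maps $H^{2,0}(X)$ onto $H^{2,0}(K)$, so $r(T_X)_\QQ$ is a rational sub-Hodge structure of $H^2(K,\QQ)$ containing $H^{2,0}(K)$; as $\mathrm{rk}\,T_X=21=\mathrm{rk}\,T_K$ a rank count gives $r(T_X)_\QQ=T_K{}_\QQ$, and likewise $r(s^\perp\cap Pic(X))_\QQ=Pic(K)_\QQ=\QQ h$. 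Hence the orthogonal decomposition $H^2(K,\QQ)=T_K{}_\QQ\oplus Pic(K)_\QQ$ into sub-Hodge structures is preserved by $\bar\psi$. On $T_K{}_\QQ$ a Hodge isometry lies in the field $\mathrm{End}_{\mathrm{Hodge}}(T_K)_\QQ=\QQ$ and squares to $1$, so it is $\pm\mathrm{id}$; on the line $\QQ h$ it is likewise $\pm\mathrm{id}$. To pin the signs down I would invoke that both constructions are effective: $r$, coming from pullbacks of cohomology, and $r'$, built in Proposition \ref{Brcon} from the effective Hodge isometry $H^2(T,\ZZ)\cong H^2(K,\ZZ)$ together with $E\cong\PP(\cS)$, are compatible with the same holomorphic two-form class (inherited from $\omega_T$ on the common K3 surface $T$) and with the ample cone of $K$; therefore $\bar\psi$ fixes $\omega_K$ and fixes $h$, so $\bar\psi=\mathrm{id}$ on $H^2(K,\QQ)$. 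In particular $\bar\psi(H^2(K,\ZZ))=H^2(K,\ZZ)$, and moreover $r(s^\perp)=r'(s^\perp)$ and $\psi=\mathrm{id}$, i.e.\ $r=r'$. Since $r$ and $r'$ are defined over the whole (irreducible, hence connected) Heegner component and depend on $X$ in a locally constant way, the equality $r=r'$ then propagates from the very general $X$ to every $X$ admitting a BN divisorial contraction as in the statement.

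The step I expect to be the real obstacle is the sign bookkeeping. Without an effectivity input one can only conclude $\bar\psi=\mathrm{diag}(\varepsilon_1\,\mathrm{id}_{T_K},\varepsilon_2\,\mathrm{id}_{Pic(K)})$ with $\varepsilon_i=\pm1$, and when $h^2=2d$ with $d\ge 2$ the map $\mathrm{diag}(-\mathrm{id}_{T_K},\mathrm{id}_{Pic(K)})$ does \emph{not} preserve $H^2(K,\ZZ)$, because the gluing subgroup $H^2(K,\ZZ)/(T_K\oplus Pic(K))$ is the graph of an isomorphism $Pic(K)^\ast/Pic(K)\cong\ZZ/2d\ZZ \to T_K^\ast/T_K$ of exponent $>2$ and is not stable under that map. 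So one genuinely has to show $\varepsilon_1=\varepsilon_2$, and the clean way is to verify that O'Grady's topological $r$ and the lattice-theoretic $r'$ are \emph{oriented compatibly} — that they send a fixed generator of $H^{2,0}$ to the same class, not to its negative. This is where the explicit nature of the two constructions, both of which factor through the Mukai lattice $\tH(T,\ZZ)$ of the common K3 surface $T$, must be used; everything else is formal.
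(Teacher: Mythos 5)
Your argument takes a genuinely different route from the paper's. The paper's proof is a two-line lattice argument: by O'Grady's Claim 4.6 the lattice $s^\perp$ (whose discriminant group has order $4$) admits a \emph{unique} even unimodular overlattice, via the correspondence between even overlattices and isotropic subgroups of the discriminant group; since both $r$ and $r'$ exhibit $H^2(K,\ZZ)$ as such an overlattice of $s^\perp$, the isometry $r'\circ r^{-1}$ automatically carries the distinguished isotropic subgroup to itself and hence extends. This needs no Hodge theory, no genericity assumption, no specialization argument, and no sign analysis, and it yields exactly what the proposition asserts (identification of $r$ and $r'$ up to an isometry of the target, not $r=r'$ on the nose).

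Your route, by contrast, aims at the stronger equality $r=r'$ via $\mathrm{End}_{\mathrm{Hodge}}(T_K{}_\QQ)=\QQ$ for very general $X$, and this is precisely what forces you into the sign problem that you yourself flag as the real obstacle. That step is a genuine gap as written: to conclude $\varepsilon_1=+1$ you must show that O'Grady's $r$ (which sends $\omega_X$ to the class $\omega$ on $K$ determined by $p^*\omega=i^*\omega_X$) and the lattice-theoretic $r'$ (which sends $\omega_X$ to the class induced by $\omega_T$ under the Mukai--Yoshioka isometry $H^2(M_{v-s}(T,B),\ZZ)\cong H^2(T,\ZZ)$) hit the \emph{same} generator of the line $H^{2,0}(K)$, not its negative. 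This is a nontrivial compatibility between the restriction of the Mukai symplectic form on $M_v(T,B)$ to the exceptional divisor and the symplectic form on $T$; asserting that both are "inherited from $\omega_T$" does not establish it, and closing it is not obviously easier than the proposition itself. (A small mitigating observation: since $\bar\psi$ must carry the index-two sublattice $r(s^\perp)$ into $H^2(K,\ZZ)$, the bad case $\varepsilon_1\neq\varepsilon_2$ is automatically excluded once $d\geq 3$, because $\mathrm{diag}(-1,1)(H^2(K,\ZZ))\cap H^2(K,\ZZ)$ has index $d$; but for $d=1,2$, and in any case for the equality $r=r'$, you still need the sign computation.) I would recommend replacing the Hodge-theoretic argument by the overlattice-uniqueness argument, which settles the statement for every $X$ in the family at once.
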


\ts Since both $r$ and $r'$ are isometries between $s^\perp$ and its image, there is a natural
isometry $r(s^\perp)\rightarrow r'(s^\perp)$. Using the correspondence between 
isotropic subgroups of the discriminant group and even overlattices of $s^\perp$,
O'Grady \cite[Claim 4.6]{O2} showed that there is a unique even unimodular overlattice of $s^\perp$.
Using the maps $r,r'$ that overlattice can then be identified with $H^2(K,\ZZ)$.
\qed

\section{Examples of Heegner divisors and BN contractions}\label{exaHeegner}

\subsection{The Heegner divisor $\cD^{(1)}_{2,8}$ and double EPW sextics}
\label{D128}\label{verraBN}
A general element of $\cD^{(1)}_{2,8}$ corresponds to a \HK\ fourfold 
which is birational to a moduli space $M_{(2,2B,0)}(K_2,B)$ 
for a K3 surface $(K_2,h_2)$ of degree two with Picard group $\Pic(K_2)=\ZZ h_2$, 
as in Table \ref{table:heegner}.
The B-field $B$ is non-trivial and satisfies
$Bh_2=0$, $B^2=1/2$, since $4Bh_2+h_2^2=2$ is not divisible by four, 
the value of $B^2$ is not an invariant.
Equivalently, the Brauer class $\alpha=\alpha_B\in \Br(K_2)_2$ defined
by $B$ has $a_\alpha=0$, $\lambda_\alpha^2=2$. 
Since $h_2^2=2$, there are $2^{20}-1$ such Brauer classes (see Thm.\ \ref{vG}.1b).

The Brauer class $\alpha$ corresponds to a line bundle $\cL=\cL_\alpha\in \Pic(C)_2$ 
of order two on the sextic branch curve $C\subset\PP^2$ of
the double cover $K_2\rightarrow \PP^2$ (see \S \ref{Brauerd2}, \cite{vG}, \cite{IOOV}).
In Proposition \ref{c2order2} we constructed a
conic bundle $p:E=E_\alpha\rightarrow K_2$
with this Brauer class and with $K_E^3=12$.
We recall that the push-forward $\cL$ to $\PP^2=\PP^2_y$ has a resolution which determines a
symmetric $3\times 3$ matrix $M(y)$ whose entries are
homogeneous polynomials of degree two as in the proof of Prop.\ \ref{c2order2}.
This matrix defines a Verra threefold, a conic bundle $V'_K$ over $\PP^2$
(with singular fibers over $C$):
$$
V'_K\,:=\,\{(x,y)\in \PP^2_x\times\PP^2_y\,:\;{}^txM(y)x\,=\,0\,\}~.
$$
The pull-back of this conic bundle along $g$ to $K_2$ is birationally isomorphic by contracting one of the irreducible divisors over $C$ to the conic bundle $p:E\rightarrow K_2$ whose Brauer class is $\alpha$.

\

There is another description of these \HK\ fourfolds, as resolutions of 
singular double EPW sextics,
and the conic bundles due to O'Grady in \cite{O2}. 

Let $V$ be a six dimensional complex vector space and let $\mL G(\wedge^3V)$
be the Lagrangian Grassmannian parametrizing maximally isotropic (for the wedge product) subspaces
in $\wedge^3V$. A general $A\in \mL G(\wedge^3V)$ defines an EPW sextic hypersurface 
$Y_A\subset\PP V$
which is singular along a surface of degree $40$. There is natural double cover 
$X_A\rightarrow Y_A$
which is a \HK\ fourfold of K3$^{[2]}$ type, called an EPW sextic.
In general has $\Pic(X_A)=\ZZ H_A$ and $H_A^2=2$. 

Let $\Sigma\subset \mL G(\wedge^3V)$ be the divisor of those $A$ which for which there exists
a three dimensional subspace $W\subset V$ such  that $\wedge^3W\subset A$. For a general
$A\in \Sigma$ there is a unique such $W$.
In \cite[Cor.\ 3.17]{O2}
it is shown that for general $A\in\Sigma$ the EPW sextic $Y_A$ is singular
along a K3 surface $K_2$ (denoted by $S_A$ in \cite{O2}) 
and that $X_A$ has a \HK\ desingularization
$\tilde{X}_A$ with a big and nef divisor $\tilde{H}_A$ which is a 
deformation of a general $(X_A,H_A)$ (\cite[Corollary 3.21]{O2}).
The map $\varphi_{\tilde{H}}:\tilde{X}_A\rightarrow\PP V$ defined by $\tilde{H}_A$ 
is the composition of a BN contraction $\tilde{X}_A\rightarrow X_A$
and a degree two map $X_A\rightarrow Y_A$. In the diagram below $U_1$ is the unique
three dimensional subspace of $V$ with $\wedge^3U_1\in A$.
$$
\begin{array}{rccccl}\varphi_{\tilde{H}}\colon& \tilde{X}_A& \longrightarrow& X_A&
\stackrel{\mbox{\tiny 2:1}}{\longrightarrow}& Y_A \subset \PP V\cong\PP^5\\
&\cup&&\cup&&\cup\\
&E_2&\longrightarrow&K_2&\stackrel{g}{\longrightarrow}&\PP U_1 \cong\PP^2~.
\end{array}
$$
The map 
$g:K_2\rightarrow\PP U_1 \subset \PP V\cong \PP^5$
is induced by the degree two map $X_A\to Y_A\subset\PP V$ 
and $(K_2,h_2)$ is a K3 surface of degree $2$
where $h_2=g^*\cO_{\PP U_1}(1)$.
The double cover $g$ is branched along 
a sextic curve $C=C_A\subset \PP U_1$ which is smooth in general.

\begin{prop} \label{Efor128}
Let $B$ be a B-field representative of the Brauer class of the conic bundle
$E_2\rightarrow K_2$ and let $\alpha=\alpha_B\in \Br(K_2)_2$ be the corresponding Brauer class.

If $\rk\, \Pic(K_2)=1$ then the conic bundles $E=E_\alpha$ and $E_2$ on $K_2$ are isomorphic.
Moreover, the \HK\ fourfolds $\tilde{X}_A$ and  $M_{(2,2B,0)}(K_2,B)$ are isomorphic.

\end{prop}

\ts
From O'Grady's description of the Picard lattice of $\tilde{X}_A$
in $\Lambda$ in \cite[section 4.2]{O2} one finds that
his divisor $\bSS^*_2$ (see \cite[Proposition 4.12]{O2},
it is a covering of degree $2^{20}-1$ of the moduli space of degree
two K3 surfaces by the map that forgets the Brauer class),
is the Heegner divisor $\cD^{(1)}_{2,8}$.
Hence by Table \ref{table:heegner}
we have $Bh=0$, $B^2=1/2$.
By Theorem \ref{BN}, $\tilde{X}_A$ is birationally isomorphic to
$M_{(2,2B,0)}(K_2,B)$, but actually there is an isomorphism
since there are no $-10$ classes of divisibility $2$
in their K\"ahler cones.

By Proposition \ref{uniqueU}, $E\cong \PP(\cU)$ and $E_2\cong \PP(\cU_2)$
for locally free rank two $\alpha$-twisted sheaves on $K_2$. Since $K_E^3=12$ we have
$v^B(\cU)^2=-2$. From  Proposition \ref{degreeE} we get that $K_{E_2}^3=12$, hence also  $v^B(\cU_2)^2=-2$.
Proposition \ref{uniqueU'} implies that $\cU_2\cong\cU\otimes\cL$
for some line bundle $\cL$ on $K$, hence
$E\cong\PP(\cU)\cong\PP(\cU\otimes\cL)\cong\PP(\cU_2)\cong E_2$.
\qed

\

\subsubsection{Remark.} 
Let $V_K$ be the double cover of $\PP^2\times \PP^2$ branched along $V_K'$.
Similarly as in \cite[Thm.~4.5]{IK} one can now show that $\PP^1$-fibration 
$E_2\rightarrow K_2$ is isomorphic to the relative Hilbert scheme of lines  
$\Hilb_{(1,0)}V_K\to K$, where  $\Hilb_{(1,0)}V_K$ is 
the Hilbert scheme of curves in the fibers of the quadric fibration $\pi\colon V_K\to \PP W$ 
of bidegree $(1,0)$ with respect to the two projections of $V_K\subset C(\PP^2\times \PP^2)$ 
to $\PP^2$ (cf.~\cite[\S 4]{Kuz}, \cite[\S 2]{BKK}).
Moreover, we can show that $V_K$ isomorphic to $V_A$,
the Verra fourfold defined from $A$ as in \cite[(2.18)]{KV}.

\subsection{The Heegner divisor $\cD^{(1)}_{8,8,\beta}$ and Fano's of cubic fourfolds with a plane}
\label{fanoplane}
The Fano fourfold $F$ of lines in a smooth cubic fourfold $X$ is a \HK\ fourfold 
with an ample class $g\in \Pic(F)$
defined by the Pl\"ucker map which has BB-square $g^2=6$. 

Let now $X$ be a smooth cubic fourfold with a plane $T\subset X$, such fourfolds are parametrized
by the Hassett divisor $\cC_8$.
Then $T$ defines a divisor (class)
$\tau=\alpha(T)\in \Pic(F)$ where $\alpha:H^4(X,\ZZ)\rightarrow H^2(F,\ZZ)$ is the 
Abel-Jacobi map (which only changes the sign on the primitive part).
The Abel-Jacobi map is induced by the incidence correspondence
$Z$ in $X\times F$, so $Z=\{(x,[l])\in X\times F:\;x\in l\}$ (and $l$ is a line in $X$).
One has $\tau^2=-2$ and $g\tau=2$ (\cite[Example 7.5]{HTratcur}).
Thus $\Pic(F)$ has the sublattice (which is equal to $\Pic(F)$ for general 
cubics with a plane):
$$
K_8\,:=\,<g+\tau,\,\tau>\;=\,<8>\,\oplus\,<-2>~.
$$

From the incidence correspondence one finds that 
the support $\tau$, denoted by $E$, consists of the classes $[l]\in F$ of lines $l\subset X$ 
which meet $T$:
$$
E=\{[l]\in F:\;l\cap T\,\neq\,\emptyset\},\qquad  \tau\,=\,[E]~. 
$$

The divisor $E\subset F$ is well known to be a conic bundle over a K3 surface $K$ of degree two.
Choose a plane $T'\subset\PP^5$ disjoint from $T$.
For $t\in T'$ let $P_t:=<T,t>\subset \PP^5$ be the $\PP^3$ spanned by $T$ and $t$. Then 
$P_t\cap X=Q_t\cup T$ where $Q_t$ is a quadric in $P_t$. 
In other words, let $\pi\colon \bar{X}\to X$ be the blow up of $X$ along $T$. 
Then $\pi$ is a quadric bundle over $\PP^2=T'$, 
the fiber of $\pi$ over $t$ is isomorphic to $Q_t$.

Any line in $Q_t$ meets $T$ 
and conversely any line $l\subset X$ meeting $T$ is contained in a $Q_t$.
Hence  the (one or two) rulings of $Q_t$ give one or two rational curves in $E\subset F$ 
and these curves can be mapped to $t$.
This map $E\rightarrow T'$ factors over
a K3 double cover $K\rightarrow T'$ branched over sextic curve in $T'$, cf.\ \cite{voisin_tor}. 
$$
E\,\stackrel{p}{\longrightarrow}\,K\,\stackrel{\mbox{\tiny 2:1}}{\longrightarrow}\,
T'\,\cong\,\PP^2~.
$$
The map $p$ is a $\PP^1$-fibration.
See Corollary \ref{c2cubpl} (and its proof)  for this conic bundle, as well as \cite[\S 4]{Kuz}, 
and \cite{MS} for relations between the associated twisted derived category of $K$ and $F$.
We summarize the discussion above in the following proposition.

\begin{prop}\label{fanoD188} 
A general point in the Heegner divisor $\cD^{(1)}_{8,8,\beta}$ is given by $(F,K_8)$,
where $F$ is the Fano fourfold of a general cubic 4-fold with a plane and 
$K_8=<g+\tau,\tau>$.

The conic bundle $p:E\to K$ is a BN contraction on $F$ induced by $H:=g+\tau$.
The type of the Brauer class of this conic bundle 
is characterised by $hB=B^2=\frac{1}{2}$ and it corresponds to an odd theta characteristic on
the sextic branch curve of $K\rightarrow T'\cong\PP^2$.
\end{prop}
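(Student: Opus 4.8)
The plan is to identify the Fano fourfold $F$ of a general cubic fourfold with a plane as a general point of $\cD^{(1)}_{8,8,\beta}$ (the case $d=4$, $k=1$ of Proposition \ref{8d8dBr}), and then to read off the Brauer class invariants from the explicit data there and from the already-computed conic bundle of Corollary \ref{c2cubpl}. First I would recall the lattice-theoretic facts listed in the paragraph preceding the statement: for the general cubic fourfold with a plane, $Pic(F)\supset K_8=\langle g+\tau,\tau\rangle\cong\langle 8\rangle\oplus\langle -2\rangle$, with $\tau^2=-2$, $g\tau=2$, hence $(g+\tau)^2=6+4-2=8$ and $(g+\tau)\tau=2-2=0$. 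So with $H:=g+\tau$ and $E_8:=\tau$ we have a primitive rank two sublattice $\langle 8\rangle\oplus\langle -2\rangle$ of $Pic(F)$ with a big and nef class $H$ of square $8=2d$ and a prime exceptional divisor $E$ of square $-2$, i.e. a BN contraction. To pin down which Heegner divisor this is, I would compute the divisibilities of $E_8$ in $H^2(F,\ZZ)$ and in $H^\perp$: the Abel–Jacobi image of a plane is well known to be primitive and of divisibility $1$ in $H^2(F,\ZZ)$ (for instance because $\tau$ pairs to an odd number with some class coming from $H^4(X,\ZZ)$), while $\mbox{div}_{H^\perp}(E_8)=2$ — this is exactly the content of the fact that $E\to K$ has a non-trivial Brauer class and matches the column $\cD^{(1)}_{8k+2,8k+2,\ldots}$? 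No: here $2d=8$, so $d=4$, $d\equiv 0\bmod 4$, and the relevant column of Table \ref{table:heegner} is $\cD^{(1)}_{8k,8k,\beta}$ with $k=1$. Checking against Proposition \ref{8d8dBr}: $\mbox{div}_\Lambda(H_{2d})=1$, $\mbox{div}_\Lambda(E_{2d})=1$, $\mbox{div}_{H_{2d}^\perp}(E_{2d})=2$, which is the divisibility pattern just described; so $(F,K_8)$ is a general point of $\cD^{(1)}_{8,8,\beta}$.

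Next I would invoke Theorem \ref{BNBr}: since $(F,K_8)\in\cD^{(1)}_{8,8,\beta}$, the exceptional divisor $E$ is a conic bundle $p:E\to K$ over a K3 surface $K$, and $K$ together with the Brauer class $\alpha\in Br(K)_2$ of this conic bundle coincides (via the isomorphism $T\cong K$ of Theorem \ref{BNBr}) with the K3 surface $T$ of degree $h_T^2=2k=2$ and B-field $B$ appearing in the corresponding row of Table \ref{table:heegner}, namely the row with $hB=\tfrac12$, $B^2=\tfrac12$. That already gives $hB=B^2=\tfrac12$ for the Brauer class of $p:E\to K$. I should also reconcile the K3 surface $K$ produced abstractly by Theorem \ref{BNBr} with the geometric K3 double plane $K\to T'\cong\PP^2$ described in the text via the quadric bundle $\bar X\to T'$: both have degree two and both carry the conic bundle $E$, so by Proposition \ref{uniqueU'} (applicable since here $4Bh+h^2=2+2=4\equiv 0$? wait $4\cdot\tfrac12+2=4\equiv 0\bmod 4$, so Proposition \ref{uniqueU'} does not directly apply) — instead I would argue the identification of $K$ with the K3 double plane directly, as in the cited references (\cite{voisin_tor}, \cite[\S 4]{Kuz}), so that the degree-two polarization $h$ on $K$ is $g^*\cO_{\PP^2}(1)$ and $hB=\tfrac12$ as found.

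Finally, to identify the \emph{type} of $\alpha$ as "odd theta characteristic", I would combine two inputs. On one hand, the three types of non-trivial Brauer classes on a degree-two K3 are distinguished (Theorem \ref{vG}, and the discussion after Proposition \ref{uniqueU'}) by the pair $(hB\bmod\ZZ, B^2\bmod\ZZ)\in\{0,\tfrac12\}^2$, with the possibilities $(0,\text{--})$ [point of order two; here $B^2$ is not an invariant since $4Bh+h^2=2\not\equiv 0$], $(\tfrac12,\tfrac12)$ [odd theta characteristic], $(\tfrac12,0)$ [even theta characteristic]. Since we have just shown $hB=B^2=\tfrac12$, the class is of the "odd theta characteristic" type. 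On the other hand, as an independent confirmation (and to connect with Corollary \ref{c2cubpl}), the explicit conic bundle attached to an odd theta characteristic in Corollary \ref{c2cubpl} has $K_E^3=12$ and is exactly the conic bundle of lines meeting the plane — this is the quadric-bundle description recalled in the text — so $E$ here is precisely that conic bundle; by Proposition \ref{uniqueU}, $K_E^3=12$ forces $v^B(\cU)^2=-2$, consistent with the $s=(2,2B,1)$ entry of Table \ref{table:heegner} and with the uniqueness of such a twisted sheaf. The main obstacle I anticipate is the bookkeeping around the divisibility $\mbox{div}_{H^\perp}(E_8)=2$ and the clean identification of the abstract K3 $K$ from Theorem \ref{BNBr} with the geometric K3 double plane; the rest is assembling already-proved statements (Proposition \ref{8d8dBr}, Theorem \ref{BNBr}, Corollary \ref{c2cubpl}, Theorem \ref{vG}) and checking a handful of intersection numbers.
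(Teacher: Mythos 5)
Your proposal is correct and follows essentially the same route as the paper: identify the lattice $K_8=\langle g+\tau,\tau\rangle\cong\langle 8\rangle\oplus\langle -2\rangle$ and the divisibilities of $g,\tau$, locate $(F,K_8)$ on $\cD^{(1)}_{8,8,\beta}$, then read off $hB=B^2=\frac{1}{2}$ from Table \ref{table:heegner} via Theorem \ref{BNBr}, with the odd theta characteristic supplied by Corollary \ref{c2cubpl} and \cite{voisin_tor}. The only real difference is cosmetic: the paper pins down the Heegner divisor by computing $|\det(K_8^\perp)|$ via the formula of \cite{DM}, whereas you match the divisibility pattern of Proposition \ref{8d8dBr} directly, asserting $\mbox{div}_{H^\perp}(\tau)=2$ without an independent verification (justifying it by the non-triviality of the Brauer class would be circular, so this step should be backed by the determinant computation or an explicit lattice embedding).
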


\ts
The divisibilities of $g,\tau$  in $H^2(F,\ZZ)$ are $2$ and $1$ respectively. Hence
the divisibility $\gamma$ of $g+\tau$ in $H^2(F,\ZZ)$ is equal to one and thus
$(F,K_8)$ defines a point in $\cD^{(1)}_{8,2e}$.
To find $e$, we need to determine
$|\det (K_8^\perp)|$.  
As $H:=g+\tau$ has divisibility $1$, we have $|\det(H^\perp)|=16$
and since $\tau$ has divisibility $1$ we get, using \cite[(4)]{DM},
that $2e=2|\det(H^\perp)|/1=16$, hence 
$(F,g+\tau)\in\cD^{(1)}_{8,8}$. 

From Table \ref{table:heegner} we indeed obtain that the base of the conic bundle $K$ 
has a polarization $h_K$ of degree two, as we also described above, and
that the Brauer class $\alpha\in \Br(K)_2$ defined by $E$  
has a B-field representative with $hB=B^2=1/2$. The proof of Corollary \ref{c2cubpl}
gives the relation with the odd theta characteristic on the branch curve, see also 
\cite[Prop.\ 4]{voisin_tor}.
\pfs

\subsection{The Heegner divisor $\cD^{(1)}_{16,16,\beta}$ and the BOSS bundle}\label{boss16}

In case $d=8$, the Heegner divisor $\cD^{(1)}_{2d,2d}$ has a unique irreducible component 
parametrizing fourfolds with a BN contraction denoted by $\cD^{(1)}_{16,16,\beta}$. 
As we observed in \S \ref{exa5.2}, a general $X$
in $\cD^{(1)}_{16,16,\beta}$ is birationally isomorphic to the Hilbert square $S^{[2]}$ of
a K3 surface $S$ of degree $16$. 
From Table \ref{table:heegner} we find that the contraction of the exceptional divisor, 
which we now denote by $Z$ instead of $E$, 
$p:Z\rightarrow K$ is a conic bundle with non-trivial Brauer class $\alpha\in \Br(K)_2$
over a quartic K3 surface $(K,h)$. 
Moreover, any B-field $B$ representing $\alpha$ has
$Bh=\frac{1}{2}$ (modulo the integers). 
Since $4Bh+h^2=2\not\in4\ZZ$, $B^2$ does not give an extra invariant
and we may assume $B^2=1/2$. There are $2^{20}$ such Brauer classes in $\Br(K)_2$ 
(cf.\ Theorem \ref{vG}(2)). The conic bundle is uniquely determined by these data, it
is $\PP(\cU)\rightarrow K$ where $\cU$  is
the unique $\alpha$-twisted stable sheaf with $v^B(\cU)=(2,2B,1)$ by Proposition \ref{uniqueU'}.

In \cite[Corollary 9.4]{vG}, the index 2 sublattice
$\Gamma_\alpha$ of the transcendental lattice $T_K$ of $K$ is shown to be 
the transcendental lattice of a K3 surface and it is in fact $T_S$, 
with $S$ as above, cf.\ Proposition \ref{8d8dBr}. 
Alternatively, from the description of 
the K3 surface $K$ as a moduli space of certain 
sheaves on $S$ (\cite[Thm.\ 3.4.8]{IR} \cite{Alzati})
one can also deduce that $T_S$ is isomorphic to an index two sublattice of $T_K$.
In particular, $S$ determines $K$ uniquely, but for a general quartic K3 surface $K$
there are $2^{20}$ choices for $S$.

The BOSS bundles are degree $12$ threefolds $Z\subset\PP^5$
that are conic bundles over quartic surfaces in $\PP^3$.
These conic bundles were discovered in \cite{BOSS} and further studied in \cite{IR},
which we follow (see also \cite{Alzati}, \cite{Kuz}, \cite{DMS}, \cite{MSTVA}). 
We will explicitly construct an embedding 
$Z\hookrightarrow S^{[2]}$
for a K3 surface $S$ degree $16$ in $\PP^4$. We show in Proposition \ref{boss}
that this implies that the BOSS $\PP^1$-fibrations
are isomorphic to exceptional divisors in \HK\ fourfolds.

\subsubsection{An incidence correspondence} 
Let $V_6$ be a six dimensional complex vector space with a symplectic form. 
There is natural decomposition $\wedge^3V_{14}=V_{14}\oplus V_6$ as representations
of the symplectic group of $V_6$. Let $\Sigma=LG(3,V_6)$ be the Grassmannian of 
Lagrangian (i.e.\ maximally isotropic) subspaces in $V_6$. 
There is a (Pl\"ucker) embedding $\Sigma\hookrightarrow\PP V_{14}$.
For $p\in \PP V_6$ we denote by $Q_p\subset \Sigma$ the subvariety of Lagrangian
subspaces containing $p$. Then $Q_p$ is isomorphic to a smooth 3-dimensional quadric
and it is cut out in $\Sigma$ by a four dimensional linear subspace 
$\PP^4_p\subset\PP V_{14}$, so $Q_p=\PP^4_p\cap \Sigma$ (\cite[\S 2.4]{IR}).

We consider the incidence variety (see \cite[p.394]{IR})
$$
J\,:=\,
\{(p,\omega)\,\in\,\PP V_6\times\PP V_{14}^*\,:\; \PP^4_p\subset\PP^{12}_\omega\}~,
$$
here $\PP^{12}_\omega\subset \PP V_{14}$ is the hyperplane defined by
$\omega\in V_{14}^*$. The image of $J$ under the projection $\pi$ to
$\PP V_{14}^*$ is a quartic hypersurface $F=\pi(J)\subset\PP V_{14}^*$.
The singular locus $\Omega$ of $F$ is $9$-dimensional variety
and the fiber of $\pi$ over a point in $F-\Omega$ is a smooth conic (\cite[Prop.2.5.5]{IR}).

\subsubsection{The K3 surfaces $S$ and $K$}
Let $S$ be a K3 surface of degree $16$ with Picard rank one. Mukai proved that
there is, up to projective equivalence, a unique $\PP^9=\PP^9_S\subset \PP V_{14}$
such that $S=\PP^9_S\cap \Sigma$.
The Mukai dual of $S$ is the quartic K3 surface  $K=\PP^3_K\cap F$,
where $\PP^3_K\subset\PP V_{14}^*$ is the dual of $\PP^9_S$.

The BOSS conic bundle over $K$ defined by $S$
is the restriction of $\pi$ to $Z:=\pi^{-1}(K)\rightarrow K$.
$$
\begin{array}{rrcl}\pi\colon& J& \longrightarrow& \pi(J)\,=\,F\quad\subset\;\PP V_{14}^*\\
&\cup&&\cup\\
 &Z&\longrightarrow&K\,=\,\PP^3_K\cap F~.
\end{array}
$$
For $(p,\omega)\in Z\subset J$, we have $\omega\in K\subset\PP^3_K$ 
and $\PP^4_p\subset\PP^{12}_\omega$. 
Dualizing $\{\omega\}\subset \PP^3_K$ we get $\PP^9_S\subset \PP^{12}_\omega$
and thus $\PP^4_p\cap \PP^9_S\subset \PP^{12}_\omega$. In particular
$\dim \PP^4_p\cap \PP^9_S\geq 1$.

There is an embedding $Z\subset\PP V_6$ given by
$Z=\{p\in\PP V_6:\,\dim (\PP^4_p\cap \PP^9_S)\geq 1\,\}$ (\cite[Remark 3]{Alzati})
and the ideal of the image, a threefold of degree $12$, is generated by ten quintics.
This is the classical description of the BOSS bundle (\cite{BOSS}).

\subsubsection{The BOSS bundle as divisor on a Hilbert square.}\label{bosshilb}
Let $(p,\omega)\in Z\subset J$, so $\dim \PP^4_p\cap \PP^9_S\geq 1$.
As $S=\Sigma\cap \PP^9_S$ and $\Sigma\cap \PP^4_p=Q_p$ we see that
$$
S\,\cap\, (\PP^4_p\,\cap\,  \PP^9_S)\,=\,Q_p\,\cap\, (\PP^4_p\cap\PP^9_S)~.
$$
 
If $\dim \PP^4_p\cap \PP^9_S>1$ then, since $Q_p$ is a quadric threefold
in $\PP^4_p$, the surface $S$ will contain a conic which contradicts our
assumption that $\Pic(S)$ has rank one.
Hence $\PP^4_p\cap \PP^9_S$ is a line. This line is not contained in $S$ since $\Pic(S)$ has rank one.
Thus the line $\PP^4_p\cap \PP^9_S$ intersects $Q_p$ and hence $S$ in a zero cycle of degree two.
This gives a map 
$$
\psi:\,Z\,\longrightarrow\,S^{[2]},\qquad (p,\omega) \,\longmapsto S\cap \PP^4_p\cap\PP^{9}_S~.
$$
This map is an embedding and there is a BN contraction on $S^{[2]}$ which induces $\pi:Z\rightarrow
K$.

\

\begin{prop} \label{boss}
A general point in the Heegner divisor $\cD^{(1)}_{16,16,\beta}$ is given by 
$(S^{[2]},K_{16})$ where 
$(S,h)$ is a general K3 surface of degree $16$,  
$K_{16}=\langle 3H-8\xi,\,H-3\xi\rangle$ with $H\in \Pic(S^{[2]})$ the divisor class
defined by $h$ and $2\xi$ the class of the divisor parametrizing non-reduced subschemes.

The map $\psi:Z\rightarrow S^{[2]}$ is an embedding and its image is the
exceptional divisor $E$ of the corresponding BN contraction. 
The $\PP^1$-fibration $Z\rightarrow K$ is over the Mukai dual quartic K3 surface $(K,h)$ of $S$.
The Brauer class of the BOSS bundle has (unique) invariant $Bh=1/2$.
\end{prop}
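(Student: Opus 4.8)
\medskip
The plan is to proceed in three stages: a lattice computation placing $(S^{[2]},3H-8\xi)$ on the Heegner divisor $\cD^{(1)}_{16,16,\beta}$; the identification of $\psi(Z)$ with the exceptional divisor of the associated BN contraction; and the determination of the Brauer invariant from Corollary \ref{ko} and Table \ref{table:heegner}.

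First I would work inside $H^2(S^{[2]},\ZZ)=H^2(S,\ZZ)\oplus\ZZ\xi$, where $H^2=16$, $\xi^2=-2$ and $H\xi=0$, and compute
$$
(3H-8\xi)^2=16,\qquad (H-3\xi)^2=-2,\qquad (3H-8\xi)(H-3\xi)=0,
$$
so that $K_{16}=\langle 3H-8\xi,\,H-3\xi\rangle\cong\langle 16\rangle\oplus\langle -2\rangle$; as the change of basis $(H,\xi)\mapsto(3H-8\xi,H-3\xi)$ has determinant $-1$, the pair $\{3H-8\xi,H-3\xi\}$ is a $\ZZ$-basis of $\ZZ H\oplus\ZZ\xi$, which for general $(S,h)$ of degree $16$ is all of $\mathrm{Pic}(S^{[2]})$. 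Fixing $t\in H^2(S,\ZZ)$ with $ht=1$ one then checks $\mathrm{div}_\Lambda(3H-8\xi)=1$, $\mathrm{div}_\Lambda(H-3\xi)=1$ and $\mathrm{div}_{(3H-8\xi)^\perp}(H-3\xi)=2$. By Table \ref{table:heegner} and \cite{DM} these are precisely the invariants of the BN component $\cD^{(1)}_{16,16,\beta}$ of $\cD^{(1)}_{16,16}$ (the Hilbert--Chow component would require $\mathrm{div}_\Lambda(E)=2$), and by \S\ref{exa5.2} and Proposition \ref{8d8dBr} this component is parametrised by Hilbert squares of polarised degree $16$ K3 surfaces, with the associated twisted K3 surface $T=K$ of degree $2k=4$. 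Since $(3H-8\xi)^2=16>0$ and $(3H-8\xi)H=48>0$, the class $3H-8\xi$ lies in the positive cone, and together with the contraction produced in the next stage it is big and nef but not ample. Finally, the assignment $(S,h)\mapsto(S^{[2]},3H-8\xi)$ is generically injective, so its image is $19$-dimensional and dense in the irreducible divisor $\cD^{(1)}_{16,16,\beta}$, which gives the first assertion.

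Next I would analyse $\psi$. That $\psi\colon Z\to S^{[2]}$ is an embedding is the content of the description in \S\ref{bosshilb} (see \cite{IR}, \cite{Alzati}): for general $S$ of Picard rank one, $\PP^4_p\cap\PP^9_S$ is a line not contained in $S$, hence meets $S$ in a length two subscheme, and this assignment separates points and tangent directions. The image $\psi(Z)$ is a prime divisor, and $\psi$ intertwines $\pi|_Z\colon Z\to K$ with a morphism $\psi(Z)\to K$ realising $\psi(Z)$ as a conic bundle over the Mukai dual quartic surface $(K,h)$ of $S$, with $\mathrm{Pic}(K)=\ZZ h$ and $h^2=4$ (cf.\ \cite[Thm.\ 3.4.8]{IR}). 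To compute the class $[\psi(Z)]\in\mathrm{Pic}(S^{[2]})$ I would intersect $\psi(Z)$ with a fibre $f$ of $\psi|_Z$, getting $[\psi(Z)]\cdot f=K_{\psi(Z)}\cdot f=-2$ by adjunction on $S^{[2]}$ together with \S\ref{chernP}, and with a second test curve coming from $S$; this yields $[\psi(Z)]=H-3\xi$ (up to sign, the correct sign being the effective one). As $[\psi(Z)]^2=-2<0$, Theorem \ref{mar} and \cite{D} make $\psi(Z)$ contractible after a sequence of flops; but $\mathrm{Pic}(S^{[2]})$ has rank two for general $S$, so no flop is available and $S^{[2]}$ itself carries the divisorial contraction of $\psi(Z)$, necessarily induced by the primitive nef class $3H-8\xi$ orthogonal to $[\psi(Z)]$ in the positive cone. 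Hence $\psi(Z)=E$ is the BN exceptional divisor over $(S^{[2]},3H-8\xi)$, its base is $K$ by Theorem \ref{BNBr}, and $Z\to K$ is the $\PP^1$-fibration over the Mukai dual quartic K3 surface of $S$; this also supplies the nefness left open above.

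Finally, for the Brauer class I would invoke Corollary \ref{ko}. Since $E\to K$ has non-trivial Brauer class $\alpha\in Br(K)_2$ and $\mathrm{Pic}(K)=\ZZ h$ with $h^2=4$, we have $E=\PP\cU$ for a stable $\alpha$-twisted rank two sheaf $\cU$, and, reading the column $\cD^{(1)}_{8k,8k,\beta}$ of Table \ref{table:heegner} with $k=2$, a B-field representative $B$ of $\alpha$ satisfies $Bh=1/2$, $B^2=1/2$ and $v^B(\cU)=(2,2B,1)$. Because $4Bh+h^2=6\equiv 2\bmod 4$, Lemma \ref{parity} shows that $B^2$ is not an invariant of $\alpha$ (it may be normalised to $1/2$ by Lemma \ref{reprB}), so the unique invariant is $Bh=1/2$, as claimed. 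The main obstacle in this plan is the middle stage: pinning down $[\psi(Z)]$, and thereby confirming that $3H-8\xi$ is exactly the nef class inducing the contraction with no intervening wall in the movable cone of $S^{[2]}$ — this is where the Iliev--Ranestad geometry of $Z$ and of the Mukai pair $(S,K)$ has to be combined carefully with the intersection theory on the Hilbert square.
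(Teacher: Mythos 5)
The first and third stages of your plan are sound and agree with the paper: the lattice and divisibility computation identifying the component $\cD^{(1)}_{16,16,\beta}$, and the reading of $Bh=1/2$, $B^2=1/2$ from the $\cD^{(1)}_{8k,8k,\beta}$ column of Table \ref{table:heegner} together with the observation that $4Bh+h^2\equiv 2\bmod 4$ kills $B^2$ as an invariant. The gap is in your middle stage, in two places. First, the assertion that ``$\mathrm{Pic}(S^{[2]})$ has rank two for general $S$, so no flop is available'' is false as a matter of principle: the Hilbert square of a general degree $22$ K3 surface also has Picard rank two and admits a Mukai flop along the $-10$ class $2h-7\delta$ (see \S\ref{DV4f} of this paper). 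What actually rules out flops here is that the Pell-type equation $a^2-32b^2=5$ has no solutions (already modulo $5$), so there are no relevant $-10$ classes, the movable and nef cones of $S^{[2]}$ coincide, and $S^{[2]}$ is its unique birational hyper-K\"ahler model. Second, your determination of $[\psi(Z)]=H-3\xi$ by intersecting with a fibre $f$ of $Z\to K$ and ``a second test curve'' is not carried out, and the single equation $[\psi(Z)]\cdot f=-2$ is unusable until you know $H\cdot f$ and $\xi\cdot f$ — which is precisely the geometric information about how the fibres sit inside $S^{[2]}$ that you are trying to pin down. You flag this yourself as the main obstacle; as written it is an unproved step, not a proof.

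The paper closes this gap without any fibre computation. By \cite[Prop.~3.24]{O2} one has $h^0(S^{[2]},[Z])=1$ and $q([Z])<0$; Theorem \ref{mar} then makes $Z$ contractible after flops, and since there are no flops (by the Pell argument above) $[Z]$ must lie on an extremal ray of the nef cone. The two rays are perpendicular to $\xi$ and to $H-3\xi$ (the latter from the minimal solution $(3,1)$ of $a^2-8b^2=1$); the ray $\xi$ gives the Hilbert--Chow contraction to $S^{(2)}$, whereas $Z$ contracts to a quartic K3 surface, so $[Z]$ is a positive multiple of $H-3\xi$. Finally $K_Z^3=12$ is known from \cite{BOSS}, and $K_Z^3=3\,q([Z])^2$ forces $q([Z])=-2$, so $[Z]=H-3\xi$ and the contraction is of BN type. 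I would recommend replacing your test-curve computation and the Picard-rank argument by this chain of reasoning.
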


\ts
As in \cite[Example 5.3]{DM} we compute the nef cone of $S^{[2]}$. 
The Picard group of $S^{[2]}$ is generated by $H,\,\xi$, and $H^2=16$, $H\xi=0$ and $\xi^2=-2$. 
Since the Pell-type equation
$a^2-32b^2=5$ has no solutions mod $5$, 
hence has no solutions at all, the nef cone and the movable cone coincide
and thus $S^{[2]}$ has a unique birational \HK\ model (cf.\ \cite[\S 5]{DM}) and 
in particular it does not admit any flops. 

The movable cone has extremal rays $H$ and $H-8(b/a)\xi$ where 
$(a,b)$ is the solution of 
$a^2-8b^2=1$, with $a,b>0$ and $a$ minimal.
This minimal solution is $(a,b)=(3,1)$, so the nef cone has extremal rays 
$H$ and $3H-8\xi$. The $(-2)$-classes perpendicular to these rays are $\xi$ and $H-3\xi$.
Notice that indeed $(H-3\xi)(3H-8\xi)=3h^2+24\xi^2=3\cdot 16-24\cdot 2=0$
and $(H-3\xi)^2=16-9\cdot 2=-2$.

In particular, the divisor with class $H-3\xi$ 
is contracted by the big and nef class $H_{16}=3H-8\xi$.
Notice that $H_{16}^2=9\cdot 16-64\cdot 2=16(9-8)=16$, as we know should be the case for 
$(S^{[2]},H_{16})$ to be in $\cD^{(1)}_{16,16,\beta}$.

If $\psi$ is not injective, there are distinct $p,q\in \PP V_6$ such that the lines 
$l_p:=\PP^4_p\cap \PP^9_S$ and $l_q:=\PP^4_q\cap \PP^9_S$ are the same. 
Hence $Q_p\cap Q_q$ is not empty.
As the quadrics $Q_p,Q_q\subset \Sigma$ are either disjoint or intersect along a line,
they must then intersect along a line $l$. As $l$ contains the degree two 0-cycle 
cut out by $l_p$ on $S$, we have $l=l_p$. But $l\subset Q_p\cap Q_q\subset\Sigma$
hence $l\subset S$, which again contradicts that the rank of $\Pic(S)$ is one. 
Finally, $\psi$ is an embedding because the map $\PP V_6\ni p\to \PP^4_p\in G(5,14)$ 
is an embedding.

To prove that the $\PP^1$-bundle $Z\subset S^{[2]}$ 
can be contracted we use \cite[Prop.~3.24]{O2},
which asserts that $h^0(S^{[2]},Z)=1$ and $Z^2<0$ for the BB-form.
From Theorem \ref{mar} we then deduce that $Z$ 
can be contracted after a series of flops. 
But we already observed that $S^{[2]}$ does not admit flops and
hence $Z$ can be contracted, so its class is on an extremal ray.

The extremal ray defined by $\xi$ defines the HC contraction $\PP \cT_{S}\rightarrow S$
and since $Z$ is contracted to $K$, a K3 of degree four, it must define the other extremal ray.
From \cite{BOSS} we know that $K_Z^3=12$.
Hence, as in the proof of Proposition \ref{degreeE}, $Z^2=-2$, so
$Z$ defines a BN contraction and
$Z$ has class $H-3\xi\in \Pic(S^{[2]})$.
From Table \ref{table:heegner} it follows that the period point lies in $\cD_{16,16,\beta}^{(1)}$
and the B-field representative for $\beta\in \Br(T)_2$ given there is also the representative
for $\alpha\in \Br(K)_2$ by Theorem \ref{BNBr}.
\pfs

\subsection{The Heegner divisor $\cD_{6,6}^{(2)}$ and nodal cubic fourfolds}\label{nodalcubs}
For $2d=6$, a general point  of the Heegner divisor $\cD^{(2)}_{2d,2d}$
is isomorphic to $S^{[2]}$ where $(S,h)$ is a general K3 surface of degree $6$.
These K3 surfaces and their Hilbert squares
are related to nodal cubic fourfolds and their Fano varieties
cf.\ \cite[4.2, Lemma 6.3.1]{SCF}, \cite[Example 6.4]{DM}, \cite[4.3]{GO}.

Such a Hilbert square admits two divisorial contractions, one is HC
and the other is BN.
From Corollary \ref{ko} the BN divisor is the projectivisation of the rank two,
stable, Mukai bundle
$\cV$ with $v(\cV)=s=(2,h,2)$ and $s^2=-2$ (see Table \ref{table:heegner}).
In particular, the associated Brauer class is trivial.

The K3 surface $S\subset\PP^4$ is the complete intersection of a smooth quadric 
and a cubic hypersurface. Taking this quadric to be a hyperplane section of
the Grassmannian $Gr(2,4)\subset\PP^5$,
in \cite[Lemma~4.5]{GO} it is shown that the Mukai bundle 
$\cV$ can be chosen to be the restriction of the dual of the universal bundle on $Gr(2,4)$ to $S$.

\subsection{The Heegner divisor $\cD_{22,22}^{(2)}$ and Debarre-Voisin fourfolds}\label{DV4f}
Let $(K,h)$ be a K3 surface of degree $22$ and denote by $h$, $2\xi\in \Pic(K^{[2]})$ 
the classes corresponding to $h$ and the class of the divisor parametrizing
non-reduced subschemes respectively.
In \cite{DV}, Debarre and Voisin show that the general \HK\ fourfold
with period in $\cP^{(2)}_{22}$ is the zero locus of a section of $\wedge^3\cE_6$,
where $\cE_6$ is the tautological rank $6$ vector bundle on the Grassmannian $Gr(6,10)$.
Moreover, these varieties are shown to be deformations of $(K^{[2]},H)$, where
$$
H = 10h \,-\, 33\xi,\qquad
\mbox{so}\quad H^2 \,=\, 10^2\cdot 22 +   33^2 \cdot (-2)\, =\, 22(100 - 99) = 22~.
$$
Note that $HD \equiv 0 \mod 2$ for all $D \in H^2(K^{[2]},\ZZ)$, 
hence the divisibility $\gamma$ of $H$ is indeed $2$. 

In \cite[Lemma 3.6]{DV} it is shown that $\phi_H$ is birationally a contraction 
of a divisor $D'$ to a 
K3 surface $Y^3_\sigma$ of degree $22$ (cf.\ \cite[Prop.\ 3.2]{DV}).
In view of the divisibility of $H$, 
Table \ref{table:heegner} shows that this contraction gives a point in $\cD^{(2)}_{22,22,\alpha}$.

In  \cite[Claim, p.79]{DV} it is shown that $D'$ is not reduced, but has multiplicity two
and combining with \cite[Lemma 3.5]{DV} one finds that $D'=2E$ and the class of 
$E\in H^2(S^{[2]},\ZZ)$ is
$$
E\,:=\,3h-10\xi,\qquad\mbox{so}\quad E^2\,=\,-2,\qquad HE\,=\,0~.
$$
By Corollary \ref{ko} the BN divisor $E$ is the projectivisation of the rank two,
stable, Mukai bundle $\cV$ with $v(\cV)=s=(2,h,6)$ and $s^2=-2$.

The Hilbert square $S^{[2]}$ does admit flops, in fact it has the $-10$ class $2h-7\xi$,
and the moving cone  is divided in two chambers. 
Let $\overline{S^{[2]}}$ be the flop of $S^{[2]}$ along this $-10$ class. 
It follows that $H$ induces a BN contraction on $\overline{S^{[2]}}$.
See also \cite{DHOV} for polarized Hilbert squares that are Debarre-Voisin fourfolds.

\subsection{The Heegner divisor $\cD_{38,38}^{(2)}$ and Iliev-Ranestad fourfolds}\label{IR4f}
The general  \HK\ fourfold with period in $\cP^{(2)}_{38}$ is a variety  of sums of powers
$VSP(F,10)$ where $F$ is a cubic threefold in $\PP^5$ and points of $VSP(F,10)$ 
correspond to the ways of writing the equation of $F$ as a sum of ten cubes of linear forms
(\cite{IRspv} and \cite{IRadd}).

In \cite[\S 5]{RV} it is shown that for a general $F\in V_{V-ap}$, where $V_{V-ap}$ is
the divisor in the moduli space
of cubic fourfolds consisting of cubics apolar to a Veronese surface, the variety $VSP(F,10)$
is actually singular along a K3 surface $S$ and its desingularization 
$$
X=\widetilde{VSP(F,10)}\,\longrightarrow \,Y=VSP(F,10)
$$ 
is a divisorial contraction and the exceptional divisor $E$ is a $\PP^1$-fibration over $S$.
From our results we deduce the following which implies that a general point in 
$\cD^{(2)}_{38,38,\alpha}$ corresponds, birationally, to the contraction above.

\begin{prop}
For a general $F\in V_{V-ap}$, the desingularization $X\rightarrow Y$ is a BN contraction and 
$X=\widetilde{VSP(F,10)}$ is birational to $S^{[2]}$
where $S=Sing(Y)$ is a K3 surface of degree $38$. 
The Brauer class of the conic bundle $E\rightarrow S$
is trivial and $E$ is isomorphic to the projectivisation of the Mukai bundle
on $S$ with Mukai vector $s=(2,h,10)$ where $h^2=38$.

The Picard lattice of $S^{[2]}$, which is isomorphic to 
$\ZZ h\oplus\ZZ\xi=<38>\oplus <-2>$, contains the $-10$ class $2h-9\delta$ 
of divisibility $2$ that defines a Mukai flop from $S^{[2]}$ to $\overline{S^{[2]}}$. 
On $\overline{S^{[2]}}$ the
BN class $E=39h-170\xi$ is contracted by the morphism associated to the divisor
$H=170h-741\delta$ of degree $38$.
\end{prop}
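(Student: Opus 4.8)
The plan is to combine the geometry established by Ranestad--Voisin with the lattice machinery of Sections \ref{heegner} and \ref{BrBNdiv}. First I would recall from \cite{RV} (together with \cite{IRspv}, \cite{IRadd} and \S\ref{hkbig}) that for general $F\in V_{V-ap}$ the variety $Y=VSP(F,10)$ is singular precisely along a K3 surface $S=Sing(Y)$ with $Pic(S)=\ZZ h$ and $h^2=38$, that $X=\widetilde{VSP(F,10)}\to Y$ is a divisorial contraction induced by a big and nef class $H$ which, being the polarization that makes a smooth $VSP(G,10)$ an Iliev--Ranestad fourfold with period in $\cP^{(2)}_{38}$, has $q(H)=38$ and divisibility $\gamma=2$, and that the exceptional divisor $E$ is a $\PP^1$-fibration $p\colon E\to S$. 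Since $E$ is reduced, irreducible with $q(E)<0$, Theorem \ref{mar} applies, and the period of $(X,H)$ lies in $\cP^{(2)}_{38}$ but outside the image of the period map, hence on one of the $\gamma=2$ Heegner divisors of \S\ref{hkbig}. The only such divisor is $\cD^{(2)}_{2d,2d,\alpha}$ (non-empty because $d=19\equiv 3\bmod 4$), which is of BN type. Therefore $q(E)=-2$, the period point of $(X,H)$ lies in $\cD^{(2)}_{38,38,\alpha}$ (with $d=19=4\cdot 4+3$, so $k=4$), and $K_E^3=12$ by Proposition \ref{degreeE}.

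Next I would invoke Proposition \ref{2d2d2}: a general point of $\cD^{(2)}_{38,38,\alpha}$ is birational to a moduli space of sheaves $M_v(T)$ with $B=0$, where $(T,h_T)$ is a general K3 surface of degree $38$ and $v=(2,h_T,9)$, $H_{38}=(0,h_T,19)$, $E_{38}=(2,h_T,10)$. By Theorem \ref{BNBr} the base of the induced conic bundle is isomorphic to $T$, and since that base is $S$ we get $T\cong S$. Corollary \ref{Mukai} then gives (using $2d=38\equiv 6\bmod 8$) that the Brauer class of $p\colon E\to S$ is trivial and $E\cong\PP\cV$, where $\cV$ is the unique stable rank two Mukai bundle on $S$ with $v(\cV)=s=(2,h,10)$; this is exactly the normalized Mukai vector $(2,h,(d+1)/2)$ of Corollary \ref{coo}.

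For the birational model I would apply the argument of \S\ref{exa5.2} (and \cite[Rem.~5.2]{DM}): $X\in\cD^{(2)}_{38,38,\alpha}$ is birational to $S^{[2]}$ for a K3 surface $S$ of degree $38$ exactly when $Pic(X)=<H_{38},E_{38}>$ contains a $(-2)$-class $\delta=aH_{38}+bE_{38}$ of divisibility $2$ in $\Lambda$, that is with $b$ even and $b^2-19a^2=1$; the fundamental solution of this Pell equation is $(a,b)=(39,170)$, with $b$ even, so such a $\delta$ exists and, by the Torelli theorem, $X$ is birational to $S^{[2]}$. Thus $Pic(S^{[2]})=\ZZ h\oplus\ZZ\xi\cong<38>\oplus<-2>$ with $h^2=38$, $h\xi=0$, $\xi^2=-2$, and $2\xi$ the class of the divisor parametrizing non-reduced subschemes.

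Finally I would describe the walls of the movable cone of $S^{[2]}$ as in \cite[Example 5.3]{DM}, using the Bayer--Macr\`\i\ wall classification. On $\ZZ h\oplus\ZZ\xi$ the primitive class $2h-9\xi$ has $q(2h-9\xi)=4\cdot 38-2\cdot 81=-10$ and divisibility $2$ in $H^2(S^{[2]},\ZZ)$ (its pairing with $h$, with $\xi$, and with any $t$ satisfying $th=1$ is even); a $(-10)$-class of divisibility $2$ defines a flopping wall in the movable cone, so $S^{[2]}$ admits a Mukai flop $\overline{S^{[2]}}$ and the movable cone splits into the nef cones of $S^{[2]}$ and of $\overline{S^{[2]}}$. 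On $\overline{S^{[2]}}$ the class $H=170h-741\xi$ (so $741=39\cdot 19$, $q(H)=38\cdot 170^2-2\cdot 741^2=38$, divisibility $2$, and $HE=0$) is big and nef and contracts the BN class $E=39h-170\xi$ (with $q(E)=-2$ and divisibility $1$); comparing these data with Table \ref{table:heegner} and Proposition \ref{2d2d2} identifies the BN contraction on $\overline{S^{[2]}}$ with the Ranestad--Voisin contraction $X\to Y$, consistently with $X$ being birational to (the flop of) $S^{[2]}$. The main obstacle is the input of the first paragraph: one must be sure that the big and nef class $H$ on $X$ coming from the $VSP$-construction really has $q(H)=38$ and divisibility $2$, so that the period point lands on $\cD^{(2)}_{38,38,\alpha}$ rather than on a divisor yielding a small contraction; once that is granted, the remainder is Pell-equation bookkeeping combined with Propositions \ref{2d2d2}, Corollaries \ref{Mukai} and \ref{coo}, and Theorem \ref{BNBr}.
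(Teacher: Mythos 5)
Your proposal is correct and follows essentially the same route as the paper: use the divisibility $\gamma=2$ of the $VSP$-polarization and Table \ref{table:heegner} to place the period point on $\cD^{(2)}_{38,38,\alpha}$ with $k=4$, deduce the trivial Brauer class and Mukai-bundle description from Corollary \ref{Mukai}, identify $X$ birationally with $S^{[2]}$ via the Pell-equation criterion of \S\ref{exa5.2}, and then pin down the classes $E=39h-170\xi$ and $H=170h-741\xi$ from $q(E)=-2$, $q(H)=38$, $HE=0$ and the divisibility constraints. You spell out some steps (the flopping wall at the $(-10)$-class $2h-9\xi$, the appeal to Theorem \ref{BNBr}) that the paper leaves implicit, but the argument is the same.
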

\ts 
Since the contraction is given by a movable divisor $H$ with divisibility $\gamma=2$
we conclude from Table \ref{table:heegner}, where all the possible divisorial 
contractions are listed, that we are in the case of $\cD_{8k+6,8k+6}^{(2)}$ and $k=4$.
In $\Pic(S^{[2]})$ we must have $H=ah+b\xi$, with $h$ the class induced by the polarization
of degree $38$ on $S$, and
$38=H^2=38a^2-2b^2$ and $a$ should be even (since $\gamma=2$)
whereas the exceptional divisor $E$ should be 
perpendicular to $H$, with $E^2=-2$
and moreover the divisibility of $E$ is one, so $E=ch_{38}+d\delta$ and $c$ should be odd. 
Thus $38c^2-2d^2=-2$ and $(c,d)$ is a solution of the Pell equation
$d^2-19c^2=1$ whose solutions are generated by $(c,d)=(39,170)$. Then $HE=0$ (with $H\cdot h_{38}>0$)
gives $(a,b)=(170,\pm 19\cdot 39)$, so up to isometry, these are the classes of $H$ and $E$.
\pfs
 
\subsection{Further examples} Recently Benedetti and Song
described the contractions of the HK 4folds in $\cD^{(1)}_{24,24,\beta}$.
These are Debarre-Voisin fourfolds, denoted by $X^\sigma_6$ in their paper. They show in \cite[Theorem 4.19]{BenedettiS}
that they are twisted moduli spaces $M_v(T,B)$ for a degree six K3 surface $T$ with non-trivial B-field satisfying $B^2=Bh=1/2$ \cite[Lemma 4.15]{BenedettiS} and Mukai vector $v=(2,2B,0)$. Moreover, they show that these fourfolds have a divisorial contraction \cite[Proposition 4.7]{BenedettiS}, recovering our results in a special case.

\section{The Heegner divisor $\cD^{(1)}_{4,16}$, EPW sextics  
and a special EPW sextic}
\label{exBOSS}\label{grzexa}

\subsection{The hyper-K\"ahler fourfolds in $\cD^{(1)}_{4,16,\alpha}$}
Let $(X,H)$ be a general hyper-K\"ahler fourfold of K3$^{[2]}$-type with an ample divisor
class $H$ such that $q(H)=4$. Then   
$$
\varphi_{H}\colon X\longrightarrow Y\subset \PP^9
$$ 
is a birational morphism as can be checked in the case where  $X=S_{4}^{[2]}$,
where $(S,h)$ is a degree $4$ polarized K3 surface, and $H$ is defined by $h$.
Indeed, the map given by $H$ factorises through the HC contraction $S^{[2]}\to \sym^2(S)$
and an embedding $\sym^2(S)\subset \sym^2(\PP)\subset \PP^9$, so it is birational.

Now we assume that $X$ has Picard rank two, that $H$ is only big and nef
and moreover that it induces a BN contraction.
According to Table \ref{table:heegner}, the period point of $(X,H)$ then lies in 
$\cD^{(1)}_{4,16,\alpha}$.
Let the contraction be given by
$X\supset E\to K\subset Y\subset \PP^9$,
then $K$ is a K3 surface of degree $4$. 

The Brauer class of $E$ in $\Br(K)_2$
is non-trivial for the general $(X,H)$ (see Proposition \ref{2d8dBr}).

We show in \S \ref{div416} that these \HK\ fourfolds are also double EPW sextics. 
The natural involution on the fourfold $X$ permutes two conic bundles, 
one of which is $E$, the other will be denoted by $E'$. Both conic bundles
turn out to be birational to the complete intersection of the EPW sextic (in a $\PP^5$) with a quadric,
but this intersection is not a normal variety.

Since the general case remains somewhat mysterious, we study in \S \ref{exaHSd4}
a codimension one subvariety of the divisor $\cD^{(1)}_{4,16,\alpha}$ 
which parametrizes Hilbert squares of degree four
K3 surfaces with a degree four rational curve. 
In that case $E$ is a trivial conic bundle but certain aspects of the geometry are still interesting.
Finally, in \S \ref{Fermat} we consider the concrete case of a certain rational quartic curve on
the quartic Fermat surface in $\PP^3$. It turns out that then the
singular surface of the EPW sextic (in $\PP^5$, see \cite{O2})
has 60 isolated singular points,
which makes this EPW sextic rather special.

\subsection{The divisor $\cD^{(1)}_{4,16,\alpha}$ and double EPW sextics.}\label{div416}
In this section we provide some information on the $-2$-divisor $E\subset X$ which is 
a conic bundle over a quartic surface $K$.
Since the period point of $X$ lies in $\cD^{(1)}_{4,16,\alpha}$, 
the Picard group is, with now $\Lambda=H^2(X,\ZZ)$:
$$
\Pic(X)\,=\ZZ H\,\oplus\,\ZZ E\,=\,\,<4>\,\oplus\,<-2>~,\quad \
\gamma:=\mbox{div}_\Lambda(H)\,=\,1,\quad \det(\Pic(X)^{\perp_\Lambda})=16~.
$$

The nef cone of $X$ has extremal rays spanned by $H$ and $H':=3H-4E$:
$$
\overline{\mbox{Nef}(X)}\,=\,\RR_{\geq 0}H\,+\,\RR_{\geq 0}H',\qquad H':=3H-4E,\quad
H^2\,=\,(H')^2\,=4~.
$$
see \cite[\S 13]{BM}.

The perpendiculars of the extremal rays are
$$
H^\perp=\ZZ E,\quad (H')^\perp=\ZZ E',\qquad E'\,:=\,2H-3E~,\qquad E^2\,=\,(E')^2\,=\,-2~.
$$
Hence $\phi_H$, $\phi_{H'}$ contracts $E$, $E'$ respectively.

Notice that $X$ has the ample class $L:=H-E$ of square two and thus $\phi_L:X\rightarrow\PP^5$
exhibits $X$ as a double EPW sextic. The covering involution $\iota$ induces an involution 
$\iota^*$ on $\Pic(X)$ which is minus the reflection in the 
orthogonal complement of $L$.
$$
\phi_L:\,X\,\longrightarrow\,Y\,:=\,X/\iota\,\subset\,\PP^5,\qquad L\,:=\,H-E~.
$$
The involution $\iota^*$ interchanges the two extremal rays, 
and also $E$, $E'$ are exchanged. Thus these two divisors, which are conic bundles
over quartic surfaces, are mapped 
to the same threefold $F=\phi_L(E)=\phi_L(2H-3E)\subset\PP^5$, of degree $12$ 
by Proposition \ref{degreeE}.

Notice that the sum of the classes of the two $-2$-divisors is $E+E'=2L$.
Since $\phi_L(X)=Y$, an EPW sextic, $Y$ is not contained in any
quadric and thus the pull-back map $\phi_L^*:H^0(\cO_{\PP^5}(2))\rightarrow H^0(2L)$ is injective
and also surjective for dimension reasons. Thus there is a quadric $Q\subset \PP^5$ 
which cuts out the image $F$ of the $-2$-divisors in $Y$. 
As this intersection is a threefold of degree 
$2\cdot 6=12$ and also $F$ has degree 12 we conclude that $F=Q\cap Y$.
By adjunction we compute the canonical sheaf $\omega_F =\mathcal{O}_F(2)$.
Now since $\phi_L|_E$ does not contract curves and the canonical divisor 
$K_E=E|_E\neq 2 L|_E$ we conclude that the image $F$ of $E$ is not normal.

\subsection{Example: Hilbert squares of certain degree four K3's}\label{exaHSd4}
We construct a more explicit example of fourfolds, actually Hilbert squares, 
in $\cD^{(1)}_{4,16,\alpha}$.
Let $(S,h)$ be  a degree $4$ K3 surface with a $(-2)$-curve $n$ so that $hn=4$.
Notice that the quartic curve $n\subset \PP^3$ lies on a quadric surface
and that the residual curve $n'$ is again a rational quartic with class $n':=2h-n$.
The linear system $h_{12}:=h+2n$ has degree 
$$
h_{12}^2\,=(h+2n)^2 =\, h^2\,+\,4hn\,+\,4n^2
\,=\,4+16-8\,=\,12,\qquad
h_{12}n\,=\,4-4=0~.
$$
So in general $\phi_{h_{12}}:S\rightarrow S_{12}\subset\PP^7$
contracts the rational curve $n$ and $S_{12}$ is a nodal degree $12$ K3 surface.

Let $H_{12}$ be the divisor class on the Hilbert square $S^{[2]}$ defined by $h_{12}$ on $S$ and let
$$
H\,:=\,H_{12}-2\xi,\qquad H^2\,=\,12-8\,=\,4~.
$$

The divisor $n$ on $S$ defines the divisor  $E:=n+S$ which is birational to $\PP^1\times S$.
The class of $E$ is $[E]=n=(n,0)\in H^2(S^{[2]},\ZZ)=H^2(S,\ZZ)\oplus\ZZ\xi$, 
where $2\xi$ is the divisor on $S^{[2]}$ parametrizing the non-reduced subschemes.
Using the BB-form we have:
$$
N\,:=\,[E]\,=\,[n+S]~,\qquad
HN\,=\,0,\qquad N^2\,=\,-2~.
$$

Consider the subgroup 
$$
K\,:=\,\ZZ H\,\oplus\,\ZZ N\qquad(\subset H^2(S^{[2]},\ZZ))~,
$$
with the BB-form on $K$ we find the lattice $K\cong <4>\oplus <-2>$.

\begin{lem}\label{S4K} The pairs
$(S^{[2]},K)$ are parametrized by a codimension $1$ subvariety of the irreducible
component $\cD^{(1)}_{4,16}$ of the Heegner divisor $\cD^{(1)}_{4,16}$.
The line bundle $L$ on $S^{[2]}$ with $L^2=2$ from \S \ref{div416} is given by
$$
L\,=\,H\,-\,E\,=\,H_{10}\,-\,2\xi\;\in\,\Pic(S^{[2]})~,
$$
where $H_{10}$ is induced by $h_{10}:=h+n\in \Pic(S)$.
\end{lem}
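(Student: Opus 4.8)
The plan is to split the lemma into its two assertions and settle each by a short computation: the first (membership in the Heegner component, and codimension one) reduces to a lattice computation inside $\Lambda:=H^2(S^{[2]},\ZZ)=H^2(S,\ZZ)\oplus\ZZ\xi$ together with a parameter count, and the second (the shape of $L$) is a one-line computation in that same direct sum.

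For the first assertion I would work with a general triple $(S,h,n)$, so that $Pic(S)=\langle h,n\rangle$ and hence $Pic(S^{[2]})=\langle h,n,\xi\rangle$. First record that $K=\ZZ H\oplus\ZZ N$ has Gram matrix $\langle4\rangle\oplus\langle-2\rangle$, of signature $(1,1)$, from the intersection numbers already given. Then I would check that $K$ is primitive in $\Lambda$ (every element of $K$ has even $\xi$-coordinate, which forces the coefficients of a proper divisor of an element of $K$ to be integral, hence that element to lie in $K$) and that it carries exactly the divisibility data of Proposition \ref{2d8dBr} and Table \ref{table:heegner} for the column $\cD^{(1)}_{2d,8d,\alpha}$ with $d=2$: since $n$ is a $(-2)$-curve, $N=(n,0)$ is primitive in the unimodular factor $H^2(S,\ZZ)$, so $\mbox{div}_\Lambda(N)=1$; since $h_{12}=h+2n$ is primitive in $Pic(S)$ (hence in $H^2(S,\ZZ)$), the class $H=(h_{12},-2)$ has $\mbox{div}_\Lambda(H)=1$; and using that $Pic(S)$ is primitive in the unimodular lattice $H^2(S,\ZZ)$ one can choose $t\in H^2(S,\ZZ)$ with $t\cdot n=1$, $t\cdot h_{12}=0$, so that $(t,0)\in H^\perp$ and $N\cdot(t,0)=1$, giving $\mbox{div}_{H^\perp}(N)=1$. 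By \cite{DM} (see Proposition \ref{2d8dBr} and Table \ref{table:heegner}) this is precisely the embedding $Pic(X)\hookrightarrow\Lambda$ characterising $\cD^{(1)}_{4,16,\alpha}$ (in particular $|\det(K^{\perp})|=16$), so the period point of $(S^{[2]},K)$ lies there; moreover $E=n+S$ is genuinely an effective, reduced, irreducible divisor with $E^2=-2$, in agreement with the BN picture of \S\ref{div416}. For the codimension statement I would count parameters: the triples $(S,h,n)$ with $Pic(S)\supseteq\langle h,n\rangle$ of the prescribed rank-two Gram matrix form an $18$-dimensional family, the assignment $(S,h,n)\mapsto(S^{[2]},K)$ is generically injective ($S$ is recovered from $S^{[2]}$, then $n$ as the $S$-part of the unique effective $(-2)$-class in $K$ with zero $\xi$-coordinate, and then $h$ as the degree-four polarization in $Pic(S)$), and $\cD^{(1)}_{4,16,\alpha}$, being an irreducible component of a Heegner divisor in the $20$-dimensional period space $\cP^{(1)}_{4}$, has dimension $19$; hence the image is a codimension one subvariety.

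For the second assertion I would simply compute in $H^2(S^{[2]},\ZZ)=H^2(S,\ZZ)\oplus\ZZ\xi$. By the definition of $L$ in \S\ref{div416} (the difference of the big and nef class and the BN exceptional class), and with $H=H_{12}-2\xi=(h_{12},-2)$ and $[E]=(n,0)$,
\[
L\;=\;H-E\;=\;(h_{12}-n,\,-2)\;=\;(h+2n-n,\,-2)\;=\;(h_{10},\,-2)\;=\;H_{10}-2\xi ,
\]
where $h_{10}=h+n$ (so $h_{10}^2=10$), and indeed $L^2=H^2-2(H\cdot E)+E^2=4-0-2=2$, as it must be.

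The step I expect to be the real obstacle is the lattice bookkeeping above: establishing primitivity of $K$ in $\Lambda$ and, above all, pinning down $\mbox{div}_{H^\perp}(N)$ (equivalently $|\det(K^{\perp})|=16$), so as to land in the $\alpha$-component and not in some other stratum of the Heegner locus, together with checking that the parameter count is sharp --- i.e.\ that a general member of $\cD^{(1)}_{4,16,\alpha}$ is not already of the form $(S^{[2]},K)$ --- so that the $18$-dimensional image is genuinely proper of codimension one. Everything else is formal.
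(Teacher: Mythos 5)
Your proposal is correct and follows essentially the same route as the paper: one verifies that $\langle H,N\rangle\cong\langle 4\rangle\oplus\langle -2\rangle$ sits primitively in $H^2(S^{[2]},\ZZ)$ with the divisibilities of the $\cD^{(1)}_{2d,8d,\alpha}$ column of Table \ref{table:heegner} (the paper does this by writing $h,n$ explicitly inside $U^2\subset H^2(S,\ZZ)$ and reading off the divisibilities, whereas you argue abstractly from unimodularity and the primitivity of $Pic(S)$), and then computes $L=H-E$ coordinatewise in $H^2(S,\ZZ)\oplus\ZZ\xi$. Your explicit check that $\mbox{div}_{H^\perp}(N)=1$ and your parameter count for the codimension-one assertion supply details the paper leaves implicit; only your parenthetical justification of the primitivity of $K$ is slightly off (the even $\xi$-coordinate alone only forces the $H$-coefficient into $\frac{1}{2}\ZZ$ --- what clinches it is that $\alpha h_{12}+\beta n=\alpha h+(2\alpha+\beta)n$ lies in $H^2(S,\ZZ)$ only for $\alpha,\beta\in\ZZ$, since $\langle h,n\rangle=Pic(S)$ is primitive), a gap that is immediately repaired.
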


\ts  
We may assume that $h,n\in H^2(S,\ZZ)=U^3\oplus E_8(-1)^2$ are in the  first two components
$U^2$ and are given by
$$
h\,=\,(1,2)_1,\qquad n\,=\,(0,4)_1+(1,-1)_2\qquad (\subset U^3\oplus E_8(_1)^2)~.
$$
Then $h_{12}=h+2n=(1,10)_1+(2,-2)_2$ and in $\Lambda=H^2(S,\ZZ)\oplus\ZZ\xi$ 
we have $K=<H,N>$ with
$$
H\,=\,H_{12}\,-\,2\xi\,=\,(1,10)_1\,+\,2(1,-1)_2\,-\,2\xi,\qquad N\,=\,(0,4)_1+(1,-1)_2~.
$$
Since $H^2=4$ and the divisibility of $N$ in $\Lambda$ is one (since $N\cdot (0,1)_2=1$)
we see from Table \ref{table:heegner} that $(S^{[2]},K)$ lies in $\cD^{(1)}_{4,16}$.

Recall that $L=H-E=H_{12}-N-2\xi$ in $\Pic(S^{[2]})$. 
Since $H_{12}$ is induced by $h+2n\in \Pic(S)$, $H_{12}-N$ is induced by $h+n$.
\pfs

\subsection{The Fermat quartic and a special EPW sextic}\label{Fermat}
We consider the following smooth rational quartic curve $n$ in $\PP^3$:
$$
n\;:=\;\{\,(s^4:\,st^3:\,s^3t:\,t^4)\,\in\,\PP^3:\;(s:t)\,\in\,\PP^1\,\}~.
$$
This curve lies in the smooth quartic surface 
$$
S:\qquad f\,:=\,z_1^4\,+\,z_2^4\,-\,z_0z_3(z_0^2+z_3^2)\,=\,0~. 
$$
Notice that $S$ is (projectively) isomorphic to the quartic Fermat surface (substitute
$z_0:=z_0+z_3,\,z_3:=z_0-z_3$).

The intersection of $S$ with the quadric
$$
Q:\qquad q\,:=\,z_0z_3\,-\,z_1z_2\,=\,0
$$
consists of $n$ and another smooth rational degree four curve $n'$ 
obtained by permuting the second and third coordinates ($z_1\leftrightarrow z_2$) on $n$:
$$
S_4\cdot Q\,=\,n\,+\,n'~,\qquad n'\,:=\,\{\,(s^4:\,s^3t:\,st^3:\,t^4)\,\in\,\PP^3\,\}~.
$$
Thus $2h_4=n+n'$ and $h_{10}=h+n=3h-n'$. So we see
that $h_{10}$ is given by the restriction to $S$ of the cubics on $\PP^3$ containing $n'$.
Notice that these cubics are independent of $S$ and depend only on the choice of $n'$.

The ideal of $n'$ is generated by the quadratic polynomial $q$ and three cubics: 
$$
f_1\,:=\,-z_0^2z_2 \,+\, z_1^3,\qquad
f_2\,:=\,z_0z_2^2 \,-\, z_1^2z_3,\qquad
f_3\,:=\,-z_1z_3^2 \,+\, z_2^3~.
$$
Thus the map $\phi_{h_{10}}$ is defined as follows:
$$
\phi_{h_{10}}\,:\,S\,\longrightarrow\,S_{10}\,\subset\PP^6,\qquad
\phi_{h_{10}}\,=\,(z_0q:\,z_1q:\,z_2q:\,z_3q:\,f_1:\,f_2:\,f_3)~.
$$
Notice that $\phi_{h_{10}}(n)$ is the conic 
$C_n\subset S_{10}$ defined by $y_0=\ldots=y_3=y_4y_6+y_5^2=0$.

\subsubsection{The image $Z$ of $\PP^3$ in $\PP^6$}\label{3foldF}
The coordinate functions of the map $\phi_{h_{10}}$ do not depend on 
the quartic surface $S_4$ but only on the quartic curve $n'$. The image of $\PP^3$ under
$\phi_{h_{10}}$ is (according to Magma) a smooth threefold $Z$ of degree
$5$ in $\PP^6$ which is the intersection of
five quadrics defined by the polynomials:
$$
F_0\,:=\,- y_0y_5\,+\,y_1^2\,-\, y_2y_4,\quad
F_1\,:=\,y_0y_2\,-\, y_1y_5\,-\, y_3y_4,\quad
F_2\,:=\,- y_1y_2\,+\,y_4y_6\,+\,y_5^2,
$$
$$
F_3\,:=\,- y_0y_6\,+\,y_1y_3\,+\,y_2y_5,\quad
F_4\,:=\,- y_1y_6\,+\,y_2^2\,+\,y_3y_5~.
$$
The birational inverse of $\phi_{h_{10}}:\PP^3\rightarrow Z$ is the projection of $Z$ given by
$$(y_0:\ldots:y_6)\mapsto (y_0:\ldots:y_3),$$ its base locus in $Z$ is the conic $C_N$.

\subsubsection{The Del Pezzo quintic threefold.}
We recall the definition from \cite{MU} (cf.\ \cite[Section 5.1]{KPS})
of a quintic Fano threefold $Z_{MU}$ of
index $2$  in $\PP^6$, known as the Del Pezzo quintic threefold,
which is also a linear section of $Gr(2,5)\subset\PP^9$.
We show that it is isomorphic to $Z$. (More intrinsically this could be done
by studying the projection of $Z_{MU}$ from a conic to $\PP^3$, the inverse map is then
given by the cubics in the ideal of a degree four rational curve.)

The variety $Z_{MU}$ is defined by the following $5$ quadratic polynomials (\cite[p.505]{MU}):
$$
A_0\,:=\,h_0h_4 - 4h_1h_3 + 3h_2^2,\quad
A_1:=h_0h_5 - 3h_1h_4 + 2h_2h_3,\quad
A_2:=h_0h_6 - 9h_2h_4 + 8h_3^2,
$$
$$
A_3:=h_1h_6 - 3h_2h_5 + 2h_3h_4,\quad
A_4:=h_2h_6 - 4h_3h_5 + 3h_4^2~.
$$
To verify that $Z\cong Z_{MU}$ it suffices to verify that for $j=0,\ldots,4$ one has:
$$
F_j(18h_1,-3r^3h_2,-3r^4h_4,-18r^2h_5,r^2h_0,12rh_3,18h_6)\,=\,324A_j(h_0,\ldots,h_6)~,
\qquad (r^5\,=\,18)~.
$$

\subsubsection{The image $S_{10}$ of $S$}
The K3 surface $S_{10}=\phi_{h_{10}}(S)\subset\PP^6$ is the intersection of 
$Z=\phi_{h_{10}}(\PP^3)$ with a sixth quadric. 
In the case we consider here, one can take the quadric defined by
$$
F_5\,:=\,y_0^2 + y_1y_4 + y_2y_6 + y_3^2~.
$$

\subsubsection{The map from $S^{[2]}$ to $\PP^5$}\label{mapX2}
From now on we identify the K3 surfaces $S,S_{10}$.
Recall the rational map   
$$
\phi_L:\,S^{[2]}\,\longrightarrow\,Y\quad(\subset\,\PP^5)
$$
given by 
the divisor class $L=H_{10}-2\xi$,
where $H_{10}$ corresponds to $h_{10}\in \Pic(S)$, so $H_{10}^2=10$, and
$2\xi$ is the divisor parametrizing non-reduced subschemes.
It is defined by $H-2\xi$ is given by
associating to $p+q\in S^{[2]}$ the hyperplane in the $\PP^5$ of quadrics defining $S$
which vanish on the line spanned by $p$ and $q$. This map can be given explicitly as
in the proof of Proposition 2.2 of \cite{DGKKW}: if $p,q\in S$ and $F$ is a quadratic
polynomial vanishing on $S$, then $F(\lambda p+\mu q)=2\lambda\mu B(p,q)$ for a bilinear
form $B$. The bilinear forms defined by the six defining quadrics are the coordinate
functions of a rational map $S\times S \rightarrow\PP^5$ which factors over $S^{[2]}$ to give
the map defined by $L$.

\subsubsection{The EPW sextic $Y$ in $\PP^5$}\label{epw}
The map $\phi_L$ still defines a (rational) 2:1 map onto an EPW sextic in $\PP^5$. 
We checked this explicitly by using the bilinear forms $B_j$ associated to the following 
six quadratic forms $F_j$ (notice the non-standard ordering):
$$
F_5,\;F_0,\;F_1,\;F_2,\;F_4,\;F_3~.
$$

We used Magma for the computations below. The image of $S^{[2]}$ under $\phi_L$ is indeed
a sextic fourfold $Y$ whose singular locus has dimension $2$ and degree $40$, 
as expected for an EPW sextic. The explicit equation has $53$ terms (the coordinates on $\PP^5$ are 
$(x_0:\ldots:x_5)$):
{\renewcommand{\arraystretch}{1.3}
$$
\begin{array}{l}
-4x_0^3x_1x_3x_4\,+\,x_0^3x_1x_5^2\,+\,x_0^3x_2^2x_4\,+\,x_0^3x_2x_3x_5\,+\,x_0^3x_3^3\,+\,
    4x_0^2x_1^3x_4\,-\,3x_0^2x_1^2x_2x_5\,-\,3x_0^2x_1^2x_3^2 \\
    \,+\,3x_0^2x_1x_2^2x_3\,+\,4x_0^2x_1x_4^3\,-\,x_0^2x_2^4\,-\,3x_0^2x_2x_4^2x_5\,-\,
    3x_0^2x_3^2x_4^2\,+\,3x_0^2x_3x_4x_5^2\,-\,x_0^2x_5^4 \,+\,3x_0x_1^4x_3\\
    \,-\,2x_0x_1^3x_2^2\,+\,6x_0x_1^2x_3x_4^2\,-\,6x_0x_1^2x_4x_5^2\,-\,6x_0x_1x_2^2x_4^2\,-\, 
    12x_0x_1x_2x_3x_4x_5\,+\,8x_0x_1x_2x_5^3\\
    \,-\,4x_0x_1x_3^3x_4\,+\,3x_0x_1x_3^2x_5^2\,+\,8x_0x_2^3x_4x_5\,+\,3x_0x_2^2x_3^2x_4\,+\,
    x_0x_2x_3^3x_5\,+\,3x_0x_3x_4^4\,-\, 2x_0x_4^3x_5^2 \\
    \,-\,x_1^6\,-\,3x_1^4x_4^2\,+\,4x_1^3x_2x_4x_5\,+\,4x_1^3x_3^2x_4\,-\, 
    2x_1^3x_3x_5^2\,-\,6x_1^2x_2^2x_3x_4\,-\,3x_1^2x_2x_3^2x_5\,-\,3x_1^2x_4^4 \\
    \,+\, 4x_1x_2^4x_4\,+\,8x_1x_2^3x_3x_5\,+\,x_1x_2^2x_3^3\,+\,4x_1x_2x_4^3x_5\,+\,     
    4x_1x_3^2x_4^3\,-\,6x_1x_3x_4^2x_5^2\,+\,4x_1x_4x_5^4\\\,-\,4x_2^5x_5 
    \,-\,x_2^4x_3^2\,-\,
    2x_2^2x_3x_4^3\,-\,3x_2x_3^2x_4^2x_5\,+\,8x_2x_3x_4x_5^3\,-\,4x_2x_5^5 \,+\, 
    x_3^3x_4x_5^2\,-\,x_3^2x_5^4\,-\,x_4^6~.
\end{array}
$$
}

\subsubsection{The trivial conic bundle}
The divisor $E=n+S\subset S^{[2]}$ is the limit of a conic bundle in the general 
$X\in \cD^{(1)}_{4,16,\alpha}$. The image $F=\phi_L(E)$ 
is the image of $C_n\times S_{10}\subset(\PP^6)^2$,
notice that $C_n\times S_{10}\subset C_n\times Z$.
We first computed the image of $C_n\times Z$ under the map given by the bilinear forms $B_j(p,q)$,
it is the quadric in $\PP^5$ defined by:
$$
Q_{n}\,:\quad q_{N}\,=\,0,\qquad\mbox{with}\quad
q_{n}\,:=\, x_1x_4 \,-\, x_2x_5~.
$$
Thus we have an interpretation of the quadric that cuts out $F=\phi_L(E)$ 
in terms of the geometry of $S_{10}$. 
The trivial conic bundle
$n\times S\subset S^2$ maps first of all to a subvariety of $S^{[2]}$ that is birationally
isomorphic to the trivial bundle $\PP^1$-bundle over $S$, and the image of this
subvariety under $\phi_L$ is $F=Q_n\cap Y$.

\subsubsection{The singular surface of $Y$} 
The singular surface of $Y$ is rather remarkable in that its singular locus 
consists of $60$ points, which we call the very singular points of $Y$.
These points are rational over the splitting field $L$ of the polynomial $x^4-2$.
Notice that the roots of this polynomial are $\alpha_k:=i^k\sqrt[4]2$, $k=0,1,2,3$, and thus
$i=\alpha_1/\alpha_0\in L$, with $i^2=-1$, and also $\zeta_8:=(1+i)/\sqrt{2}\in L$, with 
$\zeta_8^2=i$. Thus $L$ contains the field $\QQ(\zeta_8)$
of eight-roots of unity. 
Only two of the singular points are defined over $\QQ$, there are
$11$ other Galois orbits, one of length $2$, six of length $4$ and four of length $8$.

\subsubsection{The very singular points}
The lines on the Del Pezzo quintic threefold $Z$ in $\PP^6$ 
are parametrized by $\PP^2$. The general line can be found as the image of a secant line
to the rational curve $n'\subset\PP^3$ under the map $\phi$.
Since $Z$ is defined by the last 5 quadrics 
of the six in the list given in Section \ref{epw} that define the K3 surface $S\subset\PP^6$, 
the first quadric will intersect a line on $Z$, not contained in $S_{10}$, 
in two points. This gives a rational map 
$\PP^2\rightarrow S^{[2]}$.
Under the map $S^{[2]}\rightarrow Y$ the image of this $\PP^2$
is contracted to $(1:0:\ldots:0)\in Y$, which is in fact one of the sixty 
very singular points of $Y$. 
See \cite[section 3.2]{O'G-Dual}  and \cite{Beri} for the map 
$S^{[2]}\rightarrow Y$. 

Let $C\subset S_{10}$ be a smooth conic and let $\PP_C\subset\PP^6$ be the plane it spans. 
Any quadric
containing $S_{10}$ either contains $\PP_C$ or intersects $\PP_C$ in $C$. 
If the quadric $F=0$ contains
$\PP_C$, then for any two points $p,q\in C$ one has $B(p,q)=0$ 
where $B$ is the associated bilinear form. 
Thus $C^{[2]}\cong\PP^2\subset S^{[2]}$ is contracted under $\phi_L:S^{[2]}\rightarrow Y$ 
and the image is a very singular point of $Y$
(cf.\ \cite[Section 4]{O'G-Double} but notice that $S_{10}$ does contain lines, 
in fact there are $20$ lines on $S_{10}$, 
for example $(is:it:\zeta_8t:s:\zeta_8^3t:0:t)$ parametrizes a line on $S_{10}$).
Similarly, in case the conic $C=l\cup m$ is reducible, 
the image of $l\times m\subset S^2$ in $S^{[2]}$
is contracted by $\phi_L$.

In particular the plane $y_0=\ldots=y_3=0$ spanned by the conic $C_n\subset S_{10}$
lies in the quadric $F_i=0$ except if $i=2$ and thus $\sym^2(C_n)$ is contracted to
the very singular point $(0:0:0:1:0:0)$.
The other conics that are contracted are not defined over the rationals
(there are $59$ conics on $S_{10}$, $34$ of them are reducible).

\

\

\end{document}